\documentclass[11pt,a4paper,twoside]{article}
\usepackage[utf8]{inputenc}
\usepackage{amsmath,amsfonts,amssymb,amsthm,bbm,latexsym,mathrsfs}
\usepackage{graphicx,color,epsfig,fancyhdr,dsfont}
\usepackage{enumerate}
\usepackage{hyperref}
\usepackage{indentfirst}
\usepackage[title,titletoc]{appendix}
\usepackage[all]{hypcap}
\usepackage{placeins}
\usepackage[affil-it]{authblk}
\usepackage{color}
\usepackage[includeheadfoot,margin=3.5cm]{geometry}
\usepackage{subcaption}
\usepackage{xcolor}
\usepackage{float}
\usepackage{comment}  
\allowdisplaybreaks[4]
\usepackage{textcase}
\usepackage{lineno}
\allowdisplaybreaks

\allowdisplaybreaks

\newtheorem{theorem}{Theorem}[section]
\newtheorem{definition}[theorem]{Definition}
\newtheorem{lemma}[theorem]{Lemma}

\newtheorem{remark}[theorem]{Remark}

\newtheorem{condition*}[theorem]{Condition}

\newtheorem{assumption}{Assumption}

\def\be{\mathbb{E}}
\def\br{\mathbb{R}}

\def\bn{\mathbb{N}}

\def\cb{\mathcal{B}}
\def\cd{\mathcal{D}}
\def\cl{\mathcal{L}}
\def\cm{\mathcal{M}}
\def\cp{\mathcal{P}}
\def\cf{\mathcal{F}}

\def\cw{\mathcal{W}}
\def\d{\mathrm{d}}
\def\re{\mathrm{e}}

\def\vp{\varphi}
\def\ve{\varepsilon}

\def\b1{{\mathbbm{1}}}

\numberwithin{equation}{section}

\newcommand{{\X}}{{\mathbb{X}}}                     

\title{Strong averaging principle for multiscale time-inhomogeneous SDEs with multiplicative $\alpha$-stable noises}

\author[a]{Jiaquan Lu}
\author[b,c]{Huaizhong Zhao}
\affil[a]{Research Center for Mathematics and Interdisciplinary Sciences, Shandong University, Qingdao 266237, China}
\affil[b]{Department of Mathematical Sciences, Durham University, DH1 3LE, UK}
\affil[c]{School of Mathematics, Shandong University, Jinan 250100, China}
\affil[ ]{jiaquan.lu@mail.sdu.edu.cn, huaizhong.zhao@durham.ac.uk}
\date{~}

\begin{document}
	\maketitle
    \vskip-40pt
	\vspace{-20pt}
\begin{abstract}
In this paper, we study the strong averaging principle for multiscale time-inhomogeneous stochastic systems driven by multiplicative $\alpha$-stable processes with $\alpha\in(1,2)$. Based on Khasminskii's discretization approach, we first establish that the fast component processes with a frozen slow variable admits a periodic measure. We then prove the strong convergence of the slow subsystem to an averaged system that depends on the time scale $\ve$. For any fixed $\ve$, if the reciprocals of the two periods $\tau_1$ and $\ve\tau_2$ are rationally linearly independent, an important consequence is that the averaged system has random quasi-periodicity. Furthermore, by applying the ergodic theorem, we prove the strong convergence of the slow subsystem to another averaged system, a time-inhomogeneous SDEs independent of the time scale $\ve$. Our result is also novel even in the time-homogeneous case for a fully coupled multiscale system with multiplicative $\alpha$-stable noises. Finally, we apply the result to a climate-weather system.
\vskip8pt
	\noindent
	{\bf Keywords:} Averaging principle; strong convergence; time-inhomogeneous; multiscale; periodic measure; $\alpha$-stable noise.
    \medskip
	
	\noindent
	{\bf Mathematics Subject Classifications (2010): Primary: 60H10, 37A50; Secondary: 60G52, 34C29.}
\end{abstract}

\section{Introduction}
In dynamical systems, multiscale phenomena are commonly observed in both the natural and social sciences. Examples include the division of time into days, months and years corresponding directly to the multiple scales of motion observed in the solar system  (E and Bjorn \cite{EB03}), processes occur on timescales ranging from protein-protein interactions within seconds subsequent changes in the cellular environment unfolding over hours (Bertram and Rubin \cite{BR17}), and from volatility fluctuations on asset price dynamics in financial markets to relatively slow-changing status of an economy, especially of a large economy (Fouque, Papanicolaou, and Sirca \cite{FPS00}). These systems are commonly characterised by mathematical models based on coupled stochastic differential equations (SDEs for short) or stochastic partial differential equations (SPDEs for short) with multiple time scales, where distinct equations describe the slow and fast subsystems, respectively. For these multiscale (or slow-fast) SDEs and SPDEs, the averaging principle serves as a powerful analytical tool.

On the other hand, periodicity exists in many real world problems including such slow-fast systems, in which the slow components and fast components may have different periods. For example, in the climate-weather system (Benzi, Parisi, Sutera, and Vulpiani \cite{BPSV83}, Imkeller and Von Storch \cite{IV01}, F. Benth and J. Benth \cite{BB07}), the slow climate system has about a $100,000$-year period and the fast weather system has a 1-year period of seasonal change (and a $1/365.25$-year period of daily weather change if further periodicity is explored). In the past 20 years, it has become apparent that the intertwining of periodicity and randomness yields random periodicity; see, for example, Zhao and Zheng \cite{ZZ09}, Chekroun, Simonnet, and Ghil \cite{CSG11}, Feng and Zhao \cite{FZ20}, Lu and Zhao \cite{LZ26}, to name but a few. Moreover, many real world problems are subject to extreme fluctuations of noise, for instance, extreme weather variations, sharp change of stock prices, or other economic conditions. Thus, the study of multiscale periodic SDEs driven by $\alpha$-stable processes is an interesting and important problem.

The foundations of the averaging principle lie in the classical works in celestial mechanics by Clairaut \cite{C54}, Laplace \cite{L25}, and Lagrange \cite{L53}. Later, Bogoliubov and Mitropolsky \cite{BM61} established a rigorous theoretical framework for the averaging principle in ordinary differential equations. Subsequently, Khasminskii \cite{K68} generalized the averaging principle of stochastic systems and initiated systematic investigations of the averaging principle in this direction. 

Khasminskii considered the following multiscale time-homogeneous SDEs:
\begin{equation*}
    \begin{cases}
        \d X_t^\ve=b(X_t^\ve,Y_t^\ve)\d t+\sigma(X_t^\ve,Y_t^\ve)\d W_t^1,~~X_0^\ve=x\in\br^n,\\
        \d Y_t^\ve=\frac{1}{\ve}f(X_t^\ve, Y_t^\ve)\d t+\frac{1}{\sqrt{\ve}}g(X_t^\ve,Y_t^\ve)\d W_t^2,~~Y_0^\ve=y\in\br^n,
    \end{cases}
\end{equation*}
where $\{W_t^1,t\geq 0\}$ and $\{W_t^2,t\geq0\}$ are two mutually independent Brownian motions in ${\br }^d$, $\ve$ is a small parameter in $(0,1)$ characterising the separation of time scales between the slow process $X_t^\ve$ and the fast process $Y_t^\ve$. The drift and diffusion coefficients satisfy the Lipschitz condition and are uniformly bounded with respect to $y$. It is also assumed that the ergodic condition holds in the sense that there exist two functions $\bar{b}$ and $\bar{a}$ such that for any $x,y\in\br^n$ and $t\geq0, T>0$,
\begin{align*}
    \left|\frac{1}{T}\int_t^{t+T}\be[b(x,Y_r^{x,y})]\d r-\bar{b}(x)\right|\leq \beta(T)(1+|x|^2),
\end{align*}
and
\begin{align*}
    \left|\frac{1}{T}\int_t^{t+T}\be[a(x,Y_r^{x,y})]\d r-\bar{a}(x)\right|\leq \beta(T)(1+|x|^2),
\end{align*}
where $a(x,y)=\sigma^*\sigma(x,y)$, $\beta(T)$ converges to zero as $T$ goes to infinity, and $Y_t^{x,y}$ is a solution to the following frozen equation
\begin{equation*}
    \begin{cases}
        \d Y_t^{x,y}=f(x,Y_t^{x,y})\d t+g(x,Y_t^{x,y})\d W_t^2,\\
        Y_0=y.
    \end{cases}
\end{equation*}
Under the above assumptions, Khasminskii established the averaging principle, according to which the slow component $X_t^\varepsilon$ converges weakly to the corresponding averaged system $\bar{X}_t$, where $\bar{X}_t$ is the unique solution to
\begin{equation*}
    \begin{cases}
        \d \bar{X}_t=\bar{b}(\bar{X}_t)\d t+\bar{a}(\bar{X}_t)\d W_t^1,\\
        \bar{X}_0=x.
    \end{cases}
\end{equation*}
Khasminskii’s proof was mainly based on a time-discretization technique. Since then, it has become a fundamental tool in the handling of multiscale systems. Later, Pardoux and Veretennikov \cite{PV01,PV03} developed a more analytic approach with Poisson equations as an alternative to Khaminskii's discretization method. 

Thereafter, many important results on averaging principles for multiscale SDEs and SPDEs have been obtained, such as Veretennikov \cite{V91}, Cerrai \cite{C09}, Freidlin and Wentzell \cite{FW98}, E, Liu, and Vanden-Eijnden \cite{ELV05}, Dong, Sun, Xiao, and Zhai \cite{DSXZ18}, Wang and Roberts \cite{WR12}, Cerrai and Freidlin \cite{CF09}, Br{\'e}hier \cite{B12}, R\"ockner and Xie \cite{RX21}, Hairer and Li \cite{HL20}, to name but a few. A crucial condition in the literature for obtaining a stronger convergence in probability of the slow process is that the diffusion coefficient is independent of the fast component. Moreover, all the references cited above dealt with time-homogeneous systems driven by Brownian motions, while time-inhomogeneous systems are essential for capturing time-dependent physical systems, such as a damped oscillator under periodic forcing. For a multiscale time-homogeneous system, the frozen equation associated with the fast subsystem is a time-homogeneous SDEs or SPDEs. Under some mild conditions, this equation admits an unique invariant measure. 

In contrast to the time-homogeneous case, time-inhomogeneous systems generally do not admit invariant measures in the usual sense. To address this gap in the study of multiscale time-inhomogeneous systems, Liu, R\"ockner, Sun, and Xie \cite{LRSX20} fixed the time variable in the frozen equation to obtain an invariant measure. Related applications of this method can be found in de Feo \cite{D21} and R\"ockner, Sun, and Xie \cite{RSX21}. Another approach to compensating for the lack of an invariant measure in the time-inhomogeneous frozen equations is to introduce the evolution system of measures. This approach was discussed, for example, in Cerrai and Lunardi \cite{CL17}, Wainrib \cite{W13}, Uda \cite{U21}, Cheng, Sun, and Xie \cite{CSX25}.

The aforementioned multiscale stochastic dynamical systems are all perturbed by continuous noise, whereas discontinuous noise is a key feature in the modelling of many physical systems. A typical example in finance is that a release of major news can give rise to extreme fluctuations in stock prices, resulting in a return distribution characterised by high kurtosis and heavy tails. To better model these systems, many researchers have focused on multiscale SDEs and SPDEs with jump-type noise, particularly by $\alpha$-stable noise. Here, we briefly review some related works on this topic. Sun, Xie, and Xie \cite{SXX22} considered the following multiscale stochastic dynamical systems driven by additive $\alpha$-stable noise
\begin{equation*}
    \begin{cases}
        \d X_t^\ve=b(X_t^\ve,Y_t^\ve)\d t+\d L_t^1,~~X_0^\ve=x\in\br^n,\\
        \d Y_t^\ve=\frac{1}{\ve}f(X_t^\ve,Y_t^\ve)\d t+\frac{1}{\ve^{1/\alpha}}\d L_t^2,~~Y_0^\ve=y\in\br^m,
    \end{cases}
\end{equation*}
they used the Poisson equation technique to derive the optimal rates of both strong and weak convergence. Building on the finite dimensional framework of \cite{SXX22}, Sun and Xie \cite{SX23} extended the analysis to the infinite dimensional case. Bao, Yin and Yuan \cite{BYY17} used Khasminskii's discretization approach to the averaging principle for SPDEs with two-time-scale Markov switching.  Zhang, Huang, Wang, Wang, and Duan \cite{ZHWWD24} obtained the weak averaging principle using an analysis of a nonlocal Poisson equation. Recently, Li, Sun, Wang, and Xie \cite{LSWX25} weakened the assumption of a bounded drift coefficient required in \cite{SXX22} and applied Khasminskii's discretization approach to show that a strong averaging principle holds for time-inhomogeneous SPDEs. Note that all the aforementioned SDEs and SPDEs are perturbed by additive noise.
 
In this paper, we focus on the multiscale time-inhomogeneous dynamical systems driven by multiplicative $\alpha$-stable noise
\begin{equation}\label{SDEs}
\begin{cases}
 \d X_t^{\ve}=b(t,X_t^{\ve},Y_t^{\ve})\d t+\sigma(t,X_t^{\ve})\d L_t^1,~~X_s^\ve=x\in\br^n,\\
 \d Y_t^{\ve}=\frac{1}{\ve}f(t/\ve,X_t^{\ve},Y_t^{\ve})\d t+\frac{1}{\ve^{1/\alpha}}g(t/\ve,Y_t^{\ve}) \d L_t^2,~~Y_s^\ve=y\in\br^m,
\end{cases}
\end{equation}
where $\{L_t^1,t\in\br\}$ and $\{L_t^2,t\in\br\}$ are mutually independent rotationally invariant $\alpha$-stable processes with dimensions $d_1$ and $d_2$ and stability index $\alpha\in(1,2)$, respectively, and their associated L\'evy measures are denoted by $\nu_1$ and $\nu_2$. It is worth noting that the coefficients of the slow component in \eqref{SDEs} are allowed to depend explicitly on time $t$ so as to capture the time-inhomogeneous nature of the slow subsystem. In contrast, the coefficients of the fast equation depend on $t/\varepsilon$ rather than $t$, which is a key ingredient in the derivation of the averaging principle. To see this, we freeze the slow variable at $X_t^\ve=x$, introduce the rescaled time variable $u=t/\ve$, and define the rescaled fast process $\tilde{Y}_u=Y_{\ve u}^\ve$, then the fast subsystem $Y_t^\ve$ can  be rewritten in the form 
\[\d \tilde{Y}_u=f(u,x,\tilde{Y}_u)\d u+g(u,\tilde{Y}_u)\d \tilde{L}_u^2,\]
where $L_{\ve u}^2\stackrel{d}{=}\ve^{1/\alpha}\tilde{L}_u^2$. Thus SDEs \eqref{SDEs} is a natural model for physical systems with multiple components, in particular, the $t/\ve$ appears exactly in the equation of fast component in the time-inhomogeneous case.  This allows us to establish the ergodicity of the fast nonstationary component. Moreover, see Section \ref{sec:Example} and Appendix A for motivating examples.

We only consider the case where these two stable processes have the same stability index for simplicity. When they have different indexes, our results are still valid. As far as we are aware, results in the literature on multiscale stochastic systems with multiplicative $\alpha$-stable noise remain quite limited. So far only Chen, Hao, and Zhang \cite{CHZ25} have investigated the averaging principle for SDEs using PDE methods. Their framework applies only to such slow–fast systems, in which the slow component evolved according to an SDE with multiplicative $\alpha$-stable noise, whereas the fast process was not described by an SDE but instead assumed to satisfy a strong law of large numbers and independent of the slow variable.

In contrast to the existing results, the main contribution of this work is to extend the results for multiscale stochastic systems with additive noise to multiplicative noise, using Khasminskii's time discretization method. To the best of our knowledge, this is the first work in establishing a strong averaging principle for multiscale time-inhomogeneous SDEs driven by multiplicative $\alpha$-stable noise, which is also novel even in the time-homogeneous case for a fully coupled slow-fast system with multiplicative $\alpha$-stable noises. It is worth noting that the dependence of the solutions on the initial conditions is key in making progress in this analysis. In the context of time-inhomogeneous system, we first prove that the fast component processes with frozen slow variable possess a periodic measure under some mild condition. We then apply the ergodic theory and the law of large numbers for periodic measures developed by Feng and Zhao \cite{FZ20} to derive the averaged system for the slow component. For any fixed $\ve$, an important consequence is that when both the slow and fast components possess different periodicities, the averaged system with $\ve$ has a quasi-periodicity when the reciprocals of the two periods $\tau_1$ and $\ve\tau_2$ are rationally linearly independent. This is a new observation in the literature. Random quasi-periodicity was studied recently by Feng, Qu and Zhao \cite{FQZ21}, Liu and Lu \cite{LL25}. By further applying Birkhoff's ergodic theorem of the dynamical system, we obtain a time-inhomogeneous averaged system that serves as an appropriate slow component. Our result applies to the climate-weather slow-fast system in Section \ref{sec:Example} and we obtain a quasi-periodic averaged climate-weather system, see \eqref{eq:w-c} for details, where we can take $\ve$ approximately to be $10^{-5}$ in realty. 

The remaining parts of the paper are structured as follows. Section \ref{sec:Assumptions and main results} elaborates on some common assumptions and presents the main results. The rigorous proofs of these main results are developed in Section \ref{sec:Proof of the main results}. More precisely, Sections \ref{subsec:Some prior estimates} and \ref{subsec:The auxiliary process hat{Y}_t^ve for the fast system} are devoted to deriving some a priori estimates for the coupled system and introducing the auxiliary process corresponding to the fast subsystem. In Section \ref{subsec:Ergodicity for the fast system with frozen slow variable}, we establish the ergodicity of the frozen equation. We further prove the two strong convergence properties of the slow subsystem to the averaged equations in Sections \ref{subsec:The averaged equation} and \ref{subsec:The averaged equation without ve}. Section \ref{sec:Example} provides an illustrative example to demonstrate the application of the theoretical results. Finally, Appendix \ref{A} provides a detailed description of the climate and weather systems, which serves as a motivating example for our framework.
\newpage

\section{Assumptions and main results}\label{sec:Assumptions and main results}
We start by recalling some key concepts and notations related to $\alpha$-stable processes that are essential for the subsequent analysis. Let $\{L_t,t\geq0\}$ be an $\mathbb{R}^d$-valued rotationally invariant $\alpha$-stable process, where the stability index satisfies $\alpha\in(1,2)$, whose infinitesimal generator $\cl$ is given for $\vp\in C_b^2(\br^d)$ by
\[\cl \vp(x)=\int_{\br^d\setminus\{0\}}[\vp(x+z)-\vp(x)-\b1_{\{0<|z|\leq 1\}}(z)\langle \nabla\vp(x),z\rangle]\nu(\d z),\]
where $\nu(\mathrm{d}z)$ denotes the associated L\'evy measure, which is characterized by
\[\nu(\d z)=\frac{C_{\alpha,d}}{|z|^{d+\alpha}}\d z\]
with $C_{\alpha,d}=\alpha 2^{\alpha-1}\pi^{-\frac{d}{2}}\Gamma\left(\frac{d+\alpha}{2}\right)/\Gamma\left(1-\frac{\alpha}{2}\right)$. Then for any $1\leq p<\alpha$, we have
\[\int_{|z|>1}|z|^p\nu(\d z)=\frac{C_{\alpha,d}S_d}{\alpha-p}\]
and
\[\int_{|z|\leq 1}|z|^2\nu(\d z)=\frac{C_{\alpha,d}S_d}{2-\alpha},\]
where $S_d$ is the surface area of the unit sphere $\mathbb{S}^{d-1}\subset\mathbb{R}^d$, which is given by $S_d=2\pi^{\frac{d}{2}}/\Gamma\left(\frac{d}{2}\right)$. Let $N(\d z,\d t)$ denote the corresponding Poisson random measure, satisfying
\[N((s,t]\times\Gamma)=\sum_{r\in(s,t]}\b1_{\Gamma}(L_r-L_{r-}),\]
for any $s\leq t$ and $\Gamma\in\cb(\br^d\setminus\{0\})$. According to the L\'evy-It\^o decomposition theorem \cite{A09,PZ07}, the process admits the representation
\[\d L_t=\int_{|z|\leq1}z\tilde{N}(\d z,\d t)+\int_{|z|>1}zN(\d z,\d t),\]
where $\tilde{N}(\d z,\d t):=N(\d z,\d t)-\nu(\d z)\d t$ is the compensated Poisson random measure. 

In our framework, the time index  is defined over the entire real line $\br$, thus it is necessary to extend the one-sided (for $t\geq 0$) rotationally invariant $\alpha$-stable process to the whole time domain. To this end, let $\{Z_t^1,t\geq0\}$ be an $\mathbb{R}^d$-valued symmetric and rotationally invariant $\alpha$-stable process, and let $\{Z_t^2,t\geq0\}$ be an independent copy of  $\{Z_t^1,t\geq0\}$. Then the corresponding two-sided process $\{L_t,t\in\br\}$ is constructed as
\begin{equation*}
   L_t= \begin{cases}
        Z_t^1, &t\geq0,\\
        Z_{-t}^2,&t<0.
    \end{cases}
\end{equation*}

We now introduce the assumptions on the coefficients and fix the notations used throughout the paper. Let $\langle\cdot,\cdot\rangle$ denote the Euclidean inner product, $|\cdot|$ the Euclidean norm, and $\Vert\cdot\Vert$ the standard Frobenius norm. 

\begin{assumption}\label{cond}
The drift coefficients $b:\br\times\br^n\times\br^m\to\br^n$ and $f:\br\times\br^n\times\br^m\to\br^m$, and diffusion coefficients $\sigma:\br\times\br^n\to\br^{n\times d_1}$ and $g:\br\times\br^m\to\br^{m\times d_2}$ are assumed to satisfy
\begin{description}
\item[(A1)] There exist constants $C, C_\sigma,C_g>0$ such that for all $x_i\in\br^n$, $y_i\in\br^m$ and $t_i\in\br$, $i=1,2$,  
\begin{equation}\label{cond:Lip}
    \begin{aligned}
        &|b(t_1,x_1,y_1)-b(t_2,x_2,y_2)|\leq C(|t_1-t_2|^\frac{1}{\alpha}+|x_1-x_2|+|y_1-y_2|),\\
        &|f(t_1,x_1,y_1)-f(t_1,x_2,y_2)|\leq C(|x_1-x_2|+|y_1-y_2|),\\
        &\Vert\sigma(t_1,x_1)-\sigma(t_1,x_2)\Vert+\Vert g(t_1,y_1)-g(t_1,y_2)\Vert\leq C_\sigma|x_1-x_2|+C_g|y_1-y_2|.
    \end{aligned}
\end{equation}
\item[(A2)] There exist constants $C>0$ such that for all $x\in\br^n$, $y\in\br^m$ and $t\in\br$,
\begin{align}\label{cond:linear growth}
 |b(t,x,y)|+|f(t,x,y)|\leq C(1+|x|+|y|). 
\end{align}
\item[(A3)] There exists a constant $\Lambda>0$ such that for all $x\in\br^n$, $y\in\br^m$ and $t\in\br$,
    \begin{equation}\label{eq:bounded}
        \Vert\sigma(t,x)\Vert+\Vert g(t,y)\Vert\leq \Lambda.
    \end{equation}
\item[(A4)] There exists a constant $\lambda>q$, with $q=2^{\alpha-1} C_{\alpha,d_2}S_{d_2}((2-\alpha)^{-1}+(\alpha-1)^{-1}+(\alpha-p)^{-1})C_g^\alpha$ for some $1<p<\alpha$, such that for all $x\in\br^n$, $y_i\in\br^m$, $i=1,2$ and $t\in\br$,
\begin{equation}\label{dissipative condition}
\begin{aligned}
\langle y_1-y_2,f(t,x,y_1)-f(t,x,y_2)\rangle
&\leq-\lambda|y_1-y_2|^2.
\end{aligned}
\end{equation}
\end{description}
\end{assumption}

\begin{assumption}\label{con:periodic}
    The coefficients $b(t,x,y)$, $\sigma(t,x)$ are $\tau_1$-periodic functions in $t$, while $f(t,x,y)$, $g(t,y)$ are $\tau_2$-periodic functions in $t$, i.e. for any $x\in\br^n$, $y\in\br^m$ and $t\in\br$, the following relations hold:
    \begin{equation}\label{b sigma periodic}
        \begin{aligned}
           b(t+\tau_1,x,y)=b(t,x,y),~\sigma(t+\tau_1,x)=\sigma(t,x),\\
           f(t+\tau_2,x,y)=f(t,x,y),~g(t+\tau_2,y)=g(t,y).
        \end{aligned}
    \end{equation}
\end{assumption}

\begin{remark}\label{remark:duf is bounded}
    Condition \eqref{dissipative condition} is referred to as the dissipative condition, which ensures the contraction property and ergodicity of the frozen equation \eqref{frozen SDE} introduced below, where the parameter $q$ is a technical constant introduced in the following proofs.
\end{remark}

In the following throughout the paper, we use the notation $\kappa=(\alpha,d_1,d_2,n,m,\Lambda,C,C_\sigma,C_g,\lambda)$ as a set of parameters and 
$C_\cdot$ denotes a positive constant whose dependence is indicated by the subscript. The values of these constants may change from line to line.

We now state the main results of this paper. 
\begin{theorem}\label{thm:X_t^ve-bar{X}_t^ve}
    Suppose that Assumptions \ref{cond} and \ref{con:periodic} hold. Then there exists $C_{\kappa,p,s,T}>0$ such that for any $\ve\in(0,1)$ and $p\in(1,\alpha)$,
    \begin{equation}\label{eq:|X_t^ve-bar{X}_t^ve|^p}
        \sup_{t\in[s,T]}\be[|X_t^\ve-\bar{X}_t^\ve|^p]\leq C_{\kappa,p,s,T}(1+|x|^p+|y|^p)\ve^\frac{p-1}{\alpha+p-1}.
    \end{equation}
    Here, $\bar{X}_t^\ve$ is the unique solution to the following averaged equation
    \begin{equation}\label{thmeq:avraged equation1}
       \begin{cases}
          \d \bar{X}_t^\ve=\bar{b}(t,t/\ve,\bar{X}_t^\ve)\d t+\sigma(t,\bar{X}_t^\ve) \d L_t^1,\\
          \bar{X}_s^\ve=x,
       \end{cases}
    \end{equation}
where
\begin{equation}\label{eq:the definition of bar b}
    \bar{b}(t_1,t_2,x)=\int_{\br^m}b(t_1,x,y)\rho_{t_2}^{x}(\d y),
\end{equation}
and $\rho_t^{x}$ is the unique periodic measure associated with the frozen equation
\begin{equation*}
\begin{cases}
\d Y_t=f(t,x,Y_t)\d t+g(t,Y_t) \d L_t^2,\\
Y_s=y.
\end{cases}
\end{equation*}
\end{theorem}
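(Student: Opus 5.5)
We will follow Khasminskii's time-discretization scheme, adapted to the periodic frozen dynamics and to moments of order $p\in(1,\alpha)$.

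\emph{A priori bounds.} First we establish, under (A1)--(A3), the uniform moment bounds
\[
\sup_{\ve\in(0,1)}\sup_{t\in[s,T]}\be\bigl[|X_t^\ve|^p+|Y_t^\ve|^p\bigr]\leq C_{\kappa,p,s,T}\bigl(1+|x|^p+|y|^p\bigr).
\]
For the fast component we also need a bound uniform in time, which comes from the dissipativity (A4). Next we need the time-regularity estimate
\[
\be\bigl[|X_{t+h}^\ve-X_t^\ve|^p\bigr]\leq C\bigl(1+|x|^p+|y|^p\bigr)h^{p/\alpha}.
\]
This follows from the linear growth of $b$ and the boundedness of $\sigma$. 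For the stochastic integral we use the Burkholder-type inequality for stable integrals, $\be\bigl|\int_t^{t+h}\sigma\,\d L^1\bigr|^p\leq C\Lambda^p h^{p/\alpha}$.

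\emph{Auxiliary process and frozen dynamics.} We partition $[s,T]$ into intervals of length $\delta=\delta(\ve)$. On each interval $[k\delta,(k+1)\delta)$ we define the auxiliary fast process $\hat Y_t^\ve$ by freezing the slow variable at $X_{k\delta}^\ve$ and starting from $\hat Y_{k\delta}^\ve=Y_{k\delta}^\ve$. Using (A4) with $\lambda>q$ and Itô's formula for $|\cdot|^p$ (smoothed near $0$), the difference satisfies
\[
\be\bigl[|Y_t^\ve-\hat Y_t^\ve|^p\bigr]\leq C\bigl(1+|x|^p+|y|^p\bigr)\delta^{p/\alpha}.
\]
The constant $q$ is exactly what controls the jump contribution of the multiplicative noise $g$ through the Lévy measure integrals. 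The same contraction argument proves exponential stability of the frozen equation in $t\mapsto\rho_t^x$. Concretely, the frozen solutions started from two different points merge at rate $e^{-(\lambda-q)pt}$ in $L^p$. Combining this with $\tau_2$-periodicity of $f,g$, the Krylov–Bogoliubov procedure on the time-$\tau_2$ Poincaré map gives a unique periodic measure $\rho_t^x$. We also obtain the convergence
\[
\Bigl|\be\,\phi\bigl(Y_{t}^{x,y,s}\bigr)-\int\phi\,\d\rho_t^x\Bigr|\leq C\bigl(1+|x|+|y|\bigr)e^{-c(t-s)}
\]
for Lipschitz $\phi$, together with Lipschitz dependence of $\rho_t^x$ on $x$ in Wasserstein-1. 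Consequently $\bar b(t_1,t_2,x)$ is Lipschitz in $x$, $\tfrac1\alpha$-Hölder in $t_1$, and of linear growth. This yields well-posedness of the averaged equation \eqref{thmeq:avraged equation1}.

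\emph{Main estimate.} Write $X_t^\ve-\bar X_t^\ve$ as the sum of three contributions. The drift part is $\int_s^t[b(r,X_r^\ve,Y_r^\ve)-\bar b(r,r/\ve,X_r^\ve)]\,\d r$. Next come the Lipschitz terms $\bar b(r,r/\ve,X_r^\ve)-\bar b(r,r/\ve,\bar X_r^\ve)$. Last is the noise difference $\int[\sigma(r,X_r^\ve)-\sigma(r,\bar X_r^\ve)]\,\d L^1$. The Lipschitz and noise terms are handled by Gronwall's inequality; for the stable integral we use the $L^p$ maximal inequality, which has a constant depending on $C_\sigma$. In the drift term we replace $(r,X_r^\ve,Y_r^\ve)$ by $(k\delta,X_{k\delta}^\ve,\hat Y_r^\ve)$, at a cost of $C\delta^{1/\alpha}$ in $L^p$-type norms. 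The remaining key quantity is
\[
\be\Bigl|\sum_k\int_{k\delta}^{(k+1)\delta}\Bigl[b\bigl(k\delta,X_{k\delta}^\ve,\hat Y_r^\ve\bigr)-\bar b\bigl(k\delta,r/\ve,X_{k\delta}^\ve\bigr)\Bigr]\d r\Bigr|.
\]
Rescaling time by $u=r/\ve$ and conditioning on $\cf_{k\delta}$, each block becomes $\ve\int_0^{\delta/\ve}$ of the frozen-equation deviation. By the Markov property and the exponential convergence to $\rho^x_u$, the conditional expectation of each block is $O(\ve)$. We treat cross terms between blocks through a second-moment computation in $L^1$, then interpolate to $L^p$ using the uniform moments of order slightly above $p$. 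We do not use a naive $L^2$ estimate, which is unavailable since $p<\alpha<2$.

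\emph{Main obstacle.} We expect the hardest step to be obtaining the decorrelation estimate $\be[\langle\cdot,\cdot\rangle]$ of the block integrals with only $p<\alpha$ moments. To handle it, we will bound $\be|\cdot|$ via the time-covariance argument, truncating $b$ at level $R$, and then optimize over $R$. Balancing the resulting error $\ve^{(p-1)/p}\delta^{-(p-1)/p}$-type term against $\delta^{(p-1)/\alpha}$ is what should yield the rate $\ve^{\frac{p-1}{\alpha+p-1}}$, taking $\delta=\ve^{\alpha/(\alpha+p-1)}$.
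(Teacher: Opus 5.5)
There is a genuine gap in the averaging term, which is the core of the theorem. Because you estimate $X^\ve_t-\bar X^\ve_t$ directly from the integral equation, you must control $\be|\sum_k A_k|$, where $A_k$ is the block integral of $b(k\delta,X^\ve_{k\delta},\hat Y^\ve_r)-\bar b(k\delta,r/\ve,X^\ve_{k\delta})$. Conditioning on $\cf_{k\delta}$ only gives $\be[A_k\mid\cf_{k\delta}]=O(\ve)$. The fluctuations $A_k-\be[A_k\mid\cf_{k\delta}]$ remain, and with moments only of order $<\alpha<2$ you give no concrete bound for them; this is exactly the ``main obstacle'' you leave open. The proposed remedy is not carried out, and it loses powers. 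Interpolating an $L^1$ bound $\eta$ to the $p$-th moment through moments of order $p''\in(p,\alpha)$ yields only $\eta^{(p''-p)/(p''-1)}$, whose exponent degenerates as $p\uparrow\alpha$. Your rate bookkeeping is also inconsistent. Balancing $(\ve/\delta)^{(p-1)/p}$ against $\delta^{(p-1)/\alpha}$ gives $\delta=\ve^{\alpha/(\alpha+p)}$ and the rate $\ve^{(p-1)/(\alpha+p)}$. At your choice $\delta=\ve^{\alpha/(\alpha+p-1)}$, the first term equals $\ve^{(p-1)^2/(p(\alpha+p-1))}$. Both are strictly weaker than the claimed $\ve^{(p-1)/(\alpha+p-1)}$. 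Moreover, the exponent $(p-1)/\alpha$ never arises in your scheme, since your freezing cost is $\delta^{1/\alpha}$ in $L^p$.

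The missing idea is to avoid estimating any $L^p$ norm of the oscillating integral. The paper applies It\^o's formula to $V(Z^\ve_t)=(\theta+|Z^\ve_t|^2)^{p/2}$ with $Z^\ve=X^\ve-\bar X^\ve$. The jump terms, split at $|z|=1/(2C_\sigma)$, are bounded by $C\int_s^t\be V(Z^\ve_u)\,\d u$. The averaging error then enters only through the linear functional $\int_s^t\be\langle\nabla V(Z^\ve_u),b(u,X^\ve_u,Y^\ve_u)-\bar b(u,u/\ve,X^\ve_u)\rangle\,\d u$. Since $\nabla V$ is $(p-1)$-H\"older, freezing the weight at $Z^\ve_{u_\Delta}$ costs $(\be|Z^\ve_u-Z^\ve_{u_\Delta}|^p)^{(p-1)/p}\lesssim\Delta^{(p-1)/\alpha}$; this is where that exponent comes from. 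After freezing, $\nabla V(Z^\ve_{k\Delta})$ is $\cf_{k\Delta}$-measurable and factors out of the conditional expectation. Exponential convergence to $\rho^x$ then gives $O(\ve)$ per block and $O(\ve/\Delta)$ in total, with no cross terms, no second moments and no truncation. Balancing $\Delta^{(p-1)/\alpha}$ with $\ve/\Delta$ gives $\Delta=\ve^{\alpha/(\alpha+p-1)}$ and the stated rate. This formulation also sidesteps your maximal inequality for the multiplicative stable integral. That inequality cannot take the form $C\int_s^t\be|H_r|^p\,\d r$: already for constant $H$ the left side is of order $(t-s)^{p/\alpha}$ while the right side is of order $t-s$. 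It would therefore require a nonlinear Gronwall argument. The rest of your plan is sound: the moment bounds, the auxiliary process $\hat Y$, the periodic measure (via Krylov--Bogoliubov rather than the paper's Wasserstein pull-back contraction), and the regularity of $\bar b$.
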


In order to highlight an important observation, we first recall the definition of quasi-periodic functions, further details can be found in \cite{FQZ21}.
 \begin{definition}
     Let the reciprocals of $\tau_1>0$ and $\tau_2>0$ be rationally linearly independent. A continuous function $F(r)$ is said to be quasi-periodic with periods $\tau_1$ and $\tau_2$ if there exists a continuous function $f(r_1,r_2)$ such that $F(r)=f(r,r)$, where $f$ is periodic in $r_1$ and $r_2$ with periods $\tau_1$ and $\tau_2$, respectively.
 \end{definition}

 \begin{remark}
 We rewrite \eqref{thmeq:avraged equation1} in the following form
\begin{equation*}
\begin{cases}
\d \bar{X}_t^\ve=\bar{B}^\ve(t,\bar{X}_t^\ve)\d t+\sigma(t,\bar{X}_t^\ve)\d L_t^1,\\
\bar{X}_s^\ve=x,
\end{cases}
\end{equation*}
where $\bar{B}^\ve(t,x)=\bar{b}(t,t/\ve,x)$. 
It follows from \eqref{cond:Lip} and Assumptions \ref{con:periodic} that $\bar{b}(t_1,t_2,x)$ is continuous and periodic in $t_1$ and $t_2$ with periods $\tau_1$ and $\tau_2$, respectively. 
For any fixed $\ve$, if the reciprocals of $\tau_1$ and $\ve\tau_2$ be rationally linearly independent, $\bar{B}^\ve(t,x)$ is a quasi-periodic function of $t$ with a large period $\tau_1$ and a small period $\ve\tau_2$. 
Consequently, the averaged system has random quasi-periodicity. The quasi-periodic phenomena can be observed in reality as the parameter $\ve$ will not actually be zero, although it could be very small. 
\end{remark}

\begin{theorem}\label{thm:X_t^ve-bar X}
    Suppose that Assumptions \ref{cond} and \ref{con:periodic} hold. Then there exists $C_{\kappa,p,s,T}>0$ such that for any $\ve\in(0,1)$ and $p\in(1,\alpha)$,
    \begin{equation}
        \sup_{t\in[s,T]}\be[|X_t^\ve-\bar{X}_t|^p]\leq C_{\kappa,p,s,T}(1+|x|^p+|y|^p)\ve^\frac{p-1}{\alpha+p-1}.
    \end{equation}
     Here, $\bar{X}_t$ is the unique solution to the following averaged equation without $\ve$
    \begin{equation}\label{eq:SDEAV2}
        \begin{cases}
          \d \bar{X}_t=\bar{b}(t,\bar{X}_t)\d t+\sigma(t,\bar{X}_t)\d L_t^1,\\
          \bar{X}_s=x,
        \end{cases}
     \end{equation}
   where
   \begin{equation}\label{eq:bar b(t,x)}
       \bar{b}(t_1,x)=\frac{1}{\tau_2}\int_0^{\tau_2}\bar{b}(t_1,t_2,x)\d t_2,
   \end{equation}
   and $\bar{b}(t_1,t_2,x)$ is defined in \eqref{eq:the definition of bar b}.
\end{theorem}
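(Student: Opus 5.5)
The plan is to reduce Theorem \ref{thm:X_t^ve-bar X} to Theorem \ref{thm:X_t^ve-bar{X}_t^ve} via the triangle inequality:
\[
\be|X_t^\ve-\bar X_t|^p\le 2^{p-1}\be|X_t^\ve-\bar X_t^\ve|^p+2^{p-1}\be|\bar X_t^\ve-\bar X_t|^p .
\]
The first term is already controlled by \eqref{eq:|X_t^ve-bar{X}_t^ve|^p}. It therefore remains to show that
\[
\sup_{t\in[s,T]}\be|\bar X_t^\ve-\bar X_t|^p\le C(1+|x|^p)\,\ve^{\frac{p-1}{\alpha+p-1}}.
\]
In fact we expect the better rate $\ve^{(p-1)/\alpha}$ or even $\ve$ for this term.

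\textbf{Regularity of the averaged drift.} From \eqref{cond:Lip}, \eqref{dissipative condition} and the construction of $\rho^x_t$, the map $\bar b(t_1,t_2,x)$ is Lipschitz in $x$ and $\frac1\alpha$-Hölder in $t_1$, uniformly in $t_2$. It also has linear growth, $|\bar b(t_1,t_2,x)|\le C(1+|x|)$, by the uniform moment bounds of the periodic measure. Consequently $\bar b(t_1,x)$ in \eqref{eq:bar b(t,x)} has the same properties, and both averaged equations are well posed with $\sup_{t\le T}\be|\bar X^\ve_t|^p\le C(1+|x|^p)$.

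\textbf{Comparing the two averaged equations.} Set $Z_t=\bar X^\ve_t-\bar X_t$ and split the drift difference as
\[
[\bar b(r,r/\ve,\bar X^\ve_r)-\bar b(r,r/\ve,\bar X_r)]+[\bar b(r,r/\ve,\bar X_r)-\bar b(r,\bar X_r)].
\]
The first bracket and the $\sigma$-difference are handled by Lipschitz continuity, using an $L^p$ Itô-type estimate for the jump integral. Since $p<\alpha$ this cannot go through the second moment; I would instead apply Itô's formula to $(\delta+|Z_t|^2)^{p/2}$, or reuse the estimate the paper develops for Theorem \ref{thm:X_t^ve-bar{X}_t^ve}. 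A Gronwall argument then reduces everything to the oscillatory term
\[
\be\Big|\int_s^t \big[\bar b(r,r/\ve,\bar X_r)-\bar b(r,\bar X_r)\big]\d r\Big|^p .
\]

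\textbf{Main obstacle: the oscillatory term.} To handle it, I would use a Khasminskii-type discretization with step $\delta$ (which may be taken as $\ve\tau_2 k$). On each block $[t_k,t_{k+1}]$, freeze $r$ and $\bar X_r$ in the first and third arguments at $t_k$. The cost of freezing is $C\delta^{1/\alpha}$ from time-Hölder continuity plus $C(\be|\bar X_r-\bar X_{t_k}|^p)^{1/p}\le C\delta^{1/\alpha}(1+|x|)$ from the increment estimate of the stable SDE with bounded $\sigma$ and linear-growth drift. With the slow arguments frozen, the remaining integral is
\[
\int_{t_k}^{t_{k+1}}\big[\bar b(t_k,r/\ve,\bar X_{t_k})-\bar b(t_k,\bar X_{t_k})\big]\d r .
\]
Because $t_2\mapsto \bar b(t_k,t_2,x)$ is $\tau_2$-periodic with mean $\bar b(t_k,x)$, the integrand is exactly zero-mean over each full period $\ve\tau_2$. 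Hence the block integral vanishes when $\delta$ is a multiple of $\ve\tau_2$. Any leftover partial period contributes at most $C\ve\tau_2(1+|\bar X_{t_k}|)$, which only occurs at the final block.

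Summing the blocks gives an error of order $\delta^{1/\alpha}+\ve$. Choosing $\delta=\ve\tau_2\lceil \ve^{-1/2}\rceil$ (any $\delta\to0$ with $\delta\ge\ve\tau_2$ works) yields a rate far better than $\ve^{(p-1)/(\alpha+p-1)}$. Some care is needed, however: the time-Hölder and $x$-Lipschitz constants of $\bar b(t_1,t_2,x)$ must be uniform in $t_2$, which must be checked from the periodic-measure construction. Taking $\delta\to 0$ independently of $\ve$ along multiples of $\ve\tau_2$ actually gives $O(\ve)$. Either way, combined with the first term this proves the stated bound.
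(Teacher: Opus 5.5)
Your outer architecture matches the paper's. You use the triangle inequality against $\bar X^\ve_t$, invoke Theorem~\ref{thm:X_t^ve-bar{X}_t^ve} for the first piece, and run a Khasminskii discretization that exploits the $\tau_2$-periodicity of $t_2\mapsto\bar b(t_1,t_2,x)$ (the paper's Lemma~\ref{lem:averaged linear growth}) for $\bar X^\ve_t-\bar X_t$.

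The gap is the step ``a Gronwall argument then reduces everything to the oscillatory term $\be|\int_s^t[\bar b(r,r/\ve,\bar X_r)-\bar b(r,\bar X_r)]\d r|^p$''. With multiplicative $\alpha$-stable noise and $p<\alpha$, no $L^p$-Gronwall reduction to this unweighted quantity is available. The $p$-th moment of $\int_s^t(\sigma(r,\bar X^\ve_r)-\sigma(r,\bar X_r))\d L^1_r$ scales like $(\int_s^t\Vert\sigma(r,\bar X^\ve_r)-\sigma(r,\bar X_r)\Vert^\alpha\d r)^{p/\alpha}$. That only gives a sublinear term $(\int_s^t\be|Z_r|^p\d r)^{p/\alpha}$, for which Gronwall does not close.

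It\^o's formula for $V(Z)=(\theta+|Z|^2)^{p/2}$, which you propose and the paper uses, does handle the $\sigma$- and Lipschitz-terms. What it leaves, however, is
\[
\int_s^t\be\big[\big\langle\nabla V(Z_r),\,\bar b(r,r/\ve,\bar X_r)-\bar b(r,\bar X_r)\big\rangle\big]\d r,
\]
in which the random weight $\nabla V(Z_r)$ varies in time together with the oscillation. Freezing only $r$ and $\bar X_r$ on each block, as you do, does not let you use the exact cancellation over a period $\ve\tau_2$.

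The missing step is the paper's term $I_1$. You must replace $\nabla V(Z_r)$ by the $\cf_{r_\Delta}$-measurable $\nabla V(Z_{r_\Delta})$. This uses the $(p-1)$-H\"older continuity \eqref{eq: nabla V(x_1)-nabla V(x_2)} of $\nabla V$, the increment bounds $\be|\bar X^\ve_r-\bar X^\ve_{r_\Delta}|^p+\be|\bar X_r-\bar X_{r_\Delta}|^p\lesssim\Delta^{p/\alpha}$, and H\"older's inequality. It costs $\Delta^{(p-1)/\alpha}$, and this term is what dictates the rate in both theorems.

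Your rate bookkeeping must change accordingly. The error is of order $\Delta^{(p-1)/\alpha}+\Delta^{1/\alpha}+\Delta+\ve\Delta^{-1}$, not $\delta^{1/\alpha}+\ve$. The last term is absent on complete blocks when $\Delta$ is a multiple of $\ve\tau_2$.

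Your claim of an $O(\ve)$ bound is inconsistent in any case. For fixed $\ve$, every multiple of $\ve\tau_2$ is at least $\ve\tau_2$, so the freezing error cannot be sent to zero independently of $\ve$. Your suggested $\delta\approx\ve^{1/2}$ would give only $\ve^{(p-1)/(2\alpha)}$, which is weaker than the stated rate because $2\alpha>\alpha+p-1$.

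Once $I_1$ is included, either the paper's choice $\Delta=\ve^{\alpha/(\alpha+p-1)}$ or your periodic choice $\Delta=\ve\tau_2$ works. The latter even yields $\ve^{(p-1)/\alpha}$ for $\sup_t\be|\bar X^\ve_t-\bar X_t|^p$. Combined with Theorem~\ref{thm:X_t^ve-bar{X}_t^ve}, this gives the statement.
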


\begin{remark}
   (i) To establish the fundamental mathematical framework and essential tools, we assume that $b(t,x,y)$ is $\frac{1}{\alpha}$-H\"older continuous in $t$, a natural condition for SDEs driven by $\alpha$-stable processes. This is rooted in the self-similarity of the $\alpha$-stable process, namely that $L_{ct}$ and $c^\frac{1}{\alpha}L_t$ have the same distribution, which balances the pathwise regularity of the deterministic drift and the heavy-tailed stochastic driving force.\vspace{3pt}\\
   (ii) We emphasize that our approach readily extends to cases where $b(t,x,y)$ is $\gamma$-H\"older continuous in $t$ for any $\gamma\in (0,1]$. The convergence rate in Theorems \ref{thm:X_t^ve-bar{X}_t^ve} and \ref{thm:X_t^ve-bar X} is obtained by optimizing the discretization parameter $\Delta$ in Khasminskii's time discretization method. The estimates in \eqref{estmate I1}, \eqref{estmate I2}, and \eqref{estmate I3n}, as well as those in \eqref{eq:2I1}, \eqref{eq:2I2}, and \eqref{eq:2I3}, yield the upper bound
   \[C_{\kappa,p,s,T}\Delta^{\beta}+C_{\kappa,p,s,T}\frac{\varepsilon}{\Delta},~~\beta=\min\left\{\frac{p-1}{\alpha},\gamma\right\},\]
where the first term arises from freezing the slow variable over each subinterval of length
$\Delta$, while the second represents the averaging error due to the finite relaxation time of the fast dynamics. Optimizing this upper bound by choosing $\Delta=\varepsilon^{\frac{1}{1+\beta}}$ yields the convergence rate $\varepsilon^{\frac{\beta}{1+\beta}}$.
In particular, when $b(t,x,y)$ is $\frac{1}{\alpha}$-H\"older continuous in time, we have
$\gamma=\frac{1}{\alpha}$, so that $\beta=\frac{p-1}{\alpha}$, 
which gives the convergence rate $
\varepsilon^{\frac{p-1}{\alpha+p-1}}$.
\end{remark}

We conclude this section with an important inequality that will be frequently used in the following proofs.

\begin{lemma}\label{lem:H(y,z)}
    For any $y,z\in\br^d$ and $p\in(1,\alpha)$, let $D$ be a $d\times d$ matrix. Define
    \begin{align*}
        H(y,z,D)=(\theta+|y+Dz|^2)^\frac{p}{2}-(\theta+|y|^2)^\frac{p}{2}-\langle p(\theta+|y|^2)^{\frac{p}{2}-1}y, Dz\rangle,
    \end{align*}
    where $\theta$ is a fixed positive constant. Then the following estimate holds:
    \begin{equation}\label{eq:H(y,z)}
        H(y,z,D)\leq \int_0^1\int_0^1\frac{pu_1\langle Dz,Dz\rangle}{(\theta+|y+u_2u_1Dz|^2)^{1-\frac{p}{2}}}\d u_2\d u_1.
    \end{equation}
\end{lemma}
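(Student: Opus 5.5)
The estimate is a second-order Taylor formula with integral remainder, applied twice. Fix $y$, $z$, $D$ and set $w=Dz$ and $\phi(v)=(\theta+|v|^2)^{p/2}$ for $v\in\br^d$. Since $\theta>0$, $\phi$ is $C^\infty$ on all of $\br^d$. Its gradient is $\nabla\phi(v)=p(\theta+|v|^2)^{\frac p2-1}v$, so that
\[
H(y,z,D)=\phi(y+w)-\phi(y)-\langle\nabla\phi(y),w\rangle .
\]

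First I write the difference in integral form:
\[
\phi(y+w)-\phi(y)=\int_0^1\langle\nabla\phi(y+u_1w),w\rangle\,\d u_1 .
\]
Subtracting $\langle\nabla\phi(y),w\rangle$ gives
\[
H=\int_0^1\langle\nabla\phi(y+u_1w)-\nabla\phi(y),w\rangle\,\d u_1 .
\]
For each $u_1$, I apply the fundamental theorem of calculus again, along the segment $u_2\mapsto y+u_2u_1w$:
\[
\nabla\phi(y+u_1w)-\nabla\phi(y)=\int_0^1 u_1\,\nabla^2\phi(y+u_2u_1w)\,w\,\d u_2 .
\]
This yields
\[
H=\int_0^1\int_0^1 u_1\langle\nabla^2\phi(y+u_2u_1w)w,w\rangle\,\d u_2\,\d u_1 .
\]

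Next I compute the Hessian:
\[
\nabla^2\phi(v)=p(\theta+|v|^2)^{\frac p2-1}I+p(p-2)(\theta+|v|^2)^{\frac p2-2}\,v v^{\top}.
\]
The quadratic form is therefore
\[
\langle\nabla^2\phi(v)w,w\rangle=p(\theta+|v|^2)^{\frac p2-1}|w|^2+p(p-2)(\theta+|v|^2)^{\frac p2-2}\langle v,w\rangle^2 .
\]
Because $p<\alpha<2$, the coefficient $p(p-2)$ is negative, so the second term is nonpositive and can be dropped. This gives
\[
\langle\nabla^2\phi(v)w,w\rangle\le \frac{p\langle w,w\rangle}{(\theta+|v|^2)^{1-\frac p2}} .
\]
Inserting this bound with $v=y+u_2u_1Dz$ into the double integral produces exactly \eqref{eq:H(y,z)}.

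There is no real obstacle. The only points to check are that $\theta>0$ makes $\phi$ smooth, so both integral representations hold for all $y$ and $z$, including $y=0$, and that the sign of $p-2$ allows the rank-one term to be discarded.
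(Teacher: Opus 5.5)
Your proposal is correct and follows the same route as the paper: both write $H$ as the second-order integral remainder along $v=y+u_2u_1Dz$, expand the Hessian of $(\theta+|v|^2)^{p/2}$, and drop the rank-one term $-p(2-p)(\theta+|v|^2)^{\frac p2-2}\langle v,Dz\rangle^2\le 0$ because $p<2$. You spell out the two applications of the fundamental theorem of calculus and the Hessian computation explicitly, which the paper only states, but the argument is identical.
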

\begin{proof}
    By the fundamental theorem of calculus, we have the second-order integral representation
    \begin{align*}
        H(y,z,D)=\int_0^1\int_0^1&\frac{pu_1\langle Dz,Dz\rangle}{(\theta+|v|^2)^{1-\frac{p}{2}}}-\frac{p(2-p)u_1\langle (v\otimes v)Dz,Dz\rangle}{(\theta+|v|^2)^{2-\frac{p}{2}}}\d u_2\d u_1,
    \end{align*}
    where $v=y+u_2u_1Dz$ and  $\otimes$ is the tensor product. Since $v\otimes v$ is a semi-positive definite symmetric matrix, it follows that $\langle (v\otimes v)Dz,Dz\rangle\geq 0$,
    which implies \eqref{eq:H(y,z)}.
\end{proof}

\section{Proof of the main results}\label{sec:Proof of the main results}
The proofs of Theorems \ref{thm:X_t^ve-bar{X}_t^ve} and \ref{thm:X_t^ve-bar X} are provided in this section, and the arguments are divided into the following steps. The first step is to establish preliminary estimates for the solution $(X_t^\varepsilon, Y_t^\varepsilon)$ of the multiscale coupled system. Based on Khasminskii's discretization method, we then construct an auxiliary process $\hat{Y}_t^\varepsilon$ and establish some key estimates. Next, we prove the ergodicity of the time-inhomogeneous frozen equation. This property is crucial for the construction of the averaged equation. Finally, combining the preceding estimates and the ergodicity result, we complete the proof of the main theorems.

\subsection{A priori estimates for the processes $(X_t^\varepsilon,Y_t^\varepsilon)$}\label{subsec:Some prior estimates}
This subsection begins by proving the existence and uniqueness of the solution for the multiscale systems \eqref{SDEs}. We then establish uniform moment estimates for its solution with respect to $\ve$, along with an analysis of the dependence on time for the slow variable $X_t^\ve$.
\begin{theorem}
    Suppose that Assumption \ref{cond} holds. Then for any fixed $\ve\in(0,1)$, the slow–fast SDE \eqref{SDEs} admits a unique solution $\{(X_t^\varepsilon,Y_t^\varepsilon), t\ge s\}$ for every initial value $(x,y)\in\mathbb{R}^n\times\mathbb{R}^m$.
\end{theorem}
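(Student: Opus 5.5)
For fixed $\ve\in(0,1)$ I will treat \eqref{SDEs} as a single time-inhomogeneous SDE on $\br^{n+m}$ for $Z_t=(X_t^\ve,Y_t^\ve)$, driven by the $\br^{d_1+d_2}$-valued L\'evy process $(L_t^1,L_t^2)$. Its drift is
\[
B(t,z)=\big(b(t,x,y),\ \ve^{-1}f(t/\ve,x,y)\big),
\]
and its block-diagonal noise coefficient is
\[
\Sigma(t,z)=\mathrm{diag}\big(\sigma(t,x),\ \ve^{-1/\alpha}g(t/\ve,y)\big).
\]
By (A1), $B$ and $\Sigma$ are globally Lipschitz in $z$, uniformly in $t$, with constants depending on $\ve$, which is harmless since $\ve$ is fixed. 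By (A2) and (A3), $B$ has linear growth and $\Sigma$ is bounded. I will also note that $B$ is measurable in $t$: $b$ is H\"older continuous in $t$, and for $f$ joint measurability is all that is needed; I will assume it, as is implicit in (A1).

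\textbf{Step 1 (small jumps, Picard iteration).} Using the L\'evy--It\^o decomposition recalled in Section~\ref{sec:Assumptions and main results}, I split each noise into compensated small jumps ($|z|\le1$) and large jumps ($|z|>1$). I first solve the equation with only the small-jump part. The small-jump L\'evy measure has a finite second moment, $\int_{|z|\le1}|z|^2\nu_i(\d z)<\infty$, so the standard $L^2$ Picard iteration on $[s,T]$ goes through. Combining the It\^o isometry for $\tilde N$, the Lipschitz bounds and Gronwall's inequality gives existence and pathwise uniqueness of a c\`adl\`ag adapted solution. This is the classical result for Lipschitz SDEs driven by compensated Poisson random measures (Applebaum \cite{A09}, Theorem 6.2.3).

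\textbf{Step 2 (adding the large jumps by interlacing).} The large-jump parts are compound Poisson processes, since $\nu_i(\{|z|>1\})<\infty$. Their jump times $s<\tau_1<\tau_2<\cdots$ are therefore a.s.\ locally finite. I run the small-jump solution from Step 1 up to $\tau_1$. At $\tau_1$ I add the jump
\[
\Sigma(\tau_1,Z_{\tau_1-})\,\Delta L_{\tau_1},
\]
then restart from the post-jump value, and continue recursively. Because the jump times are locally finite, this defines $Z_t$ for all $t\ge s$. Uniqueness follows from uniqueness on each interval between consecutive large jumps. This is the interlacing argument of \cite{A09}, Theorem 6.2.9. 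The argument needs no moments of the large jumps, which matters because $\alpha$-stable jumps have infinite second moment.

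\textbf{Main obstacle.} The difficulty is exactly that heavy tail: a direct $L^2$ argument with the full noise fails, and interlacing is what circumvents it. The only point I would double-check is that the time inhomogeneity, including the factor $t/\ve$, does not disturb the interlacing. It does not, because the restart at each $\tau_k$ is a new initial-value problem at time $\tau_k$ with the same coefficients, and the Lipschitz constants are uniform in $t$.
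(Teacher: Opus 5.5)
Your proposal is correct and follows the same route as the paper. The paper omits the proof and simply cites the interlacing result \cite[Theorem 6.2.9]{A09}. Your argument is exactly that result made explicit: for fixed $\ve$, you treat the system as one SDE on $\br^{n+m}$ with coefficients Lipschitz uniformly in $t$, solve the small-jump equation by Picard iteration, and then add the compound-Poisson large jumps by interlacing.
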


\begin{proof}
The proof of this theorem follows a standard argument, so we omit the details here. For more details, see  \cite[Theorem 6.2.9]{A09}.
\end{proof}

\begin{lemma}\label{lem:sup E[|X|] and sup E[|Y|]}
    Suppose that Assumption \ref{cond} holds. Then there exists $C_{\kappa,p,s,T}>0$ such that for all $p\in[1,\alpha)$,
    \begin{equation}
        \sup_{\ve\in(0,1)}\sup_{t\in[s,T]}\be[|X_t^\ve|^p]\leq C_{\kappa,p,s,T}(1+|x|^p+|y|^p),
    \end{equation}
    and
    \begin{equation}
        \sup_{\ve\in(0,1)}\sup_{t\in[s,T]}\be[|Y_t^\ve|^p]\leq C_{\kappa,p,s,T}(1+|x|^p+|y|^p).
    \end{equation}
\end{lemma}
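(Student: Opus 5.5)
The plan is to apply It\^o's formula to the smoothed functional $V(y)=(\theta+|y|^2)^{p/2}$ with $\theta>0$ fixed; its gradient is $p(\theta+|y|^2)^{p/2-1}y$. Because $\alpha$-stable processes have only moments of order $p<\alpha$, second moments are unavailable, and Lemma~\ref{lem:H(y,z)} will control the jump terms.

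\textbf{The fast component, handled first.} Applying It\^o's formula to $V(Y^\ve_t)$ and splitting the noise into small jumps ($|z|\le1$, compensated) and large jumps ($|z|>1$) gives:
\begin{itemize}
\item a drift term $\frac{p}{\ve}(\theta+|Y|^2)^{p/2-1}\langle Y,f(t/\ve,X,Y)\rangle$;
\item a nonlocal term $\frac1\ve\int H(Y,z,g)\,\nu_2(\d z)$, after the rescaling $z\mapsto \ve^{-1/\alpha}z$, which by scaling of $\nu_2$ produces exactly the factor $1/\ve$;
\item a martingale.
\end{itemize}
For the drift, combine (A4) with $y_2=0$ and the linear growth (A2) for $f(t,x,0)$:
\[
\langle y,f(t,x,y)\rangle\le-\lambda|y|^2+C(1+|x|)|y|,
\]
so the drift is at most $\frac{p}{\ve}\big(-\lambda' V(Y)+C(1+|X|)^p\big)$ by Young's inequality. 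For the nonlocal term, use the boundedness (A3) of $g$. On $|z|\le1$, Lemma~\ref{lem:H(y,z)} together with $(\theta+|v|^2)^{p/2-1}\le\theta^{p/2-1}$ gives a bound $C\Lambda^2\int_{|z|\le1}|z|^2\nu_2(\d z)$. On $|z|>1$, use $|(\theta+|y+w|^2)^{p/2}-(\theta+|y|^2)^{p/2}|\le C|w|^p$ together with $|\nabla V(y)|\le p|y|^{p-1}$, and handle the cross term by Young. This yields
\[
\frac{\d}{\d t}\be V(Y_t^\ve)\le \frac1\ve\big(-c\,\be V(Y_t^\ve)+C(1+\be|X_t^\ve|^p)\big),
\]
with $c>0$ since any $\lambda>0$ suffices here. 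The sharp threshold $\lambda>q$ only matters for differences of solutions. By comparison (Gronwall with the exponential factor $e^{-ct/\ve}$),
\[
\be|Y_t^\ve|^p\le C|y|^p+C\sup_{r\le t}\be(1+|X_r^\ve|^p),
\]
uniformly in $\ve$, because $\frac1\ve\int_s^te^{-c(t-r)/\ve}\,\d r\le 1/c$.

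\textbf{The slow component, handled next.} Applying It\^o's formula to $V(X^\ve_t)$ is analogous and has no $1/\ve$. The drift gives $\le C(1+|X|^p+|Y|^p)$ by (A2), and the jump terms give a constant by (A3) and Lemma~\ref{lem:H(y,z)}. Hence
\[
\be|X_t^\ve|^p\le C(1+|x|^p)+C\int_s^t\big(\be|X_r^\ve|^p+\be|Y_r^\ve|^p\big)\,\d r.
\]
Substituting the bound on $Y$ yields
\[
\be|X_t^\ve|^p\le C(1+|x|^p+|y|^p)+C\int_s^t\sup_{u\le r}\be|X_u^\ve|^p\,\d r,
\]
and Gronwall applied to $\sup_{u\le t}\be|X_u^\ve|^p$ closes the argument. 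Feeding this back gives the bound on $Y$. Finally, $|y|^p\le V(y)\le C(1+|y|^p)$ transfers the bounds from $V$ to $|\cdot|^p$.

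\textbf{Main obstacle.} The delicate point is the justification of taking expectations. The local martingales must be true martingales, or be removed by stopping times $\tau_R$ followed by Fatou's lemma. Expectations of the large-jump integrals are finite only because $p<\alpha$. A second issue is verifying that the cross term $\nabla V(y)\cdot gz$ on $|z|>1$ is integrable: this requires $\int_{|z|>1}|z|\nu(\d z)<\infty$, which uses $\alpha>1$. Finally, the small-jump bound needs the boundedness of $g$ rather than a Lipschitz bound, so that the bound is independent of $Y$.
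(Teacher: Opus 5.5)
Your proposal is correct and follows essentially the paper's route: It\^o's formula for $(\theta+|\cdot|^2)^{p/2}$, dissipativity with $y_2=0$, Lemma~\ref{lem:H(y,z)} for the small jumps, a first-order expansion for the large jumps, an exponential comparison for $Y^\ve$ that is uniform in $\ve$, and Gronwall for $X^\ve$; the paper differs only in using the $\ve$-dependent cutoff $\ve^{1/\alpha}C_g^{\alpha/(1-\alpha)}$ instead of rescaling $z$, and in invoking $\lambda>q$ where, as you rightly note, boundedness of $g$ makes any $\lambda>0$ sufficient for this lemma. One correction: the large-jump inequality $|V(y+w)-V(y)|\le C|w|^p$ is false as written (it fails for large $|y|$), but what you actually need does hold, namely $V(y+w)-V(y)\le p|y|^{p-1}|w|+p|w|^p$, and with it your Young's-inequality absorption of the $|y|^{p-1}$ term goes through.
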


\begin{proof}
    We consider only the case of $p\in(1,\alpha)$, as the case of $p=1$ is similar. Applying It\^o's formula, we derive that
    \begin{align*}
        \be[(1+|X_t^\ve|^2)^{\frac{p}{2}}]&=(1+|x|^2)^{\frac{p}{2}}+p\int_s^t\be[\langle(1+|X_r^\ve|^2)^{\frac{p}{2}-1}X_r^\ve,b(r,X_r^\ve,Y_r^\ve)\rangle]\d r\\
        &\quad+\int_s^t\int_{|z|>1}\be[(1+|X_r^\ve+\sigma(r,X_r^\ve)z|^2)^{\frac{p}{2}}-(1+|X_r^\ve|^2)^{\frac{p}{2}}]\nu_1(\d z)\d r\\
        &\quad+\int_s^t\int_{|z|\leq 1}\be[(1+|X_r^\ve+\sigma(r,X_r^\ve)z|^2)^{\frac{p}{2}}-(1+|X_r^\ve|^2)^{\frac{p}{2}}\\
        &\qquad\qquad\qquad\qquad\qquad-p\langle(1+|X_r^\ve|^2)^{\frac{p}{2}-1}X_r^\ve,\sigma(r,X_r^\ve)z\rangle]\nu_1(\d z)\d r\\
        &=:(1+|x|^2)^{\frac{p}{2}}+I_1+I_2+I_3.
    \end{align*}
    For the term $I_1$, we apply Young's inequality and \eqref{cond:linear growth} to obtain
    \begin{align*}
        I_1&\leq p\int_s^t\be[(1+|X_r^\ve|^2)^\frac{p-1}{2}|b(r,X_r^\ve,Y_r^\ve)|]\d r\\
        &\leq p\int_s^t\be[(1+|X_r^\ve|^2)^\frac{p}{2}]\d r+p^{1-p}(p-1)^{p-1}\int_s^t\be[|b(r,X_r^\ve,Y_r^\ve)|^p]\d r\\
        &\leq p\int_s^t\be[(1+|X_r^\ve|^2)^\frac{p}{2}]\d r+C_{\kappa,p}\int_s^t\be[1+|X_r^\ve|^p+|Y_r^\ve|^p]\d r.
    \end{align*}
    Similarly, for the term $I_2$, we obtain
    \begin{align*}
        I_2&= p\int_s^t\int_{|z|>1}\int_0^1\be\left[\frac{\langle X_r^\ve+u\sigma(r,X_r^\ve)z,\sigma(r,X_r^\ve)z\rangle}{(1+|X_r^\ve+u\sigma(r,X_r^\ve)z|^2)^{1-\frac{p}{2}}}\right]\d u\nu_1(\d z)\d r\\
        &\leq p\int_s^t\int_{|z|>1}\int_0^1\be[|\sigma(r,X_r^\ve)z||X_r^\ve+u\sigma(r,X_r^\ve)z|^{p-1}]\d u\nu_1(\d z)\d r\\
        &\leq C_p\int_s^t\int_{|z|>1}\int_0^1\be[|X_r^\ve|^{p-1}\Vert\sigma(r,X_r^\ve)\Vert|z|+u^{p-1}|\sigma(r,X_r^\ve)z|^p]\d u\nu_1(\d z)\d r\\
        &\leq C_p\int_s^t\int_{|z|>1}\be[|X_r^\ve|^p|z|+\Vert\sigma(r,X_r^\ve)\Vert^p |z|+\Vert\sigma(r,X_r^\ve)\Vert^p|z|^p]\nu_1(\d z)\d r\\
        &\leq C_pC_{\alpha,d_1}S_{d_1}(\alpha-1)^{-1}\int_s^t\be[(1+|X_r^\ve|^2)^\frac{p}{2}]\d r+C_{\kappa,p}(t-s).
    \end{align*}
    To estimate $I_3$, applying Lemma \ref{lem:H(y,z)} together with \eqref{eq:bounded} yields
    \begin{align*}
        I_3&\leq p\int_s^t\int_{|z|\leq 1}\int_0^1\int_0^1\be\left[\frac{u_1\langle\sigma(r,X_r^\ve)z,\sigma(r,X_r^\ve)z\rangle}{(1+|X_r^\ve+u_2u_1\sigma(r,X_r^\ve)z|^2)^{1-\frac{p}{2}}}\right]\d u_2\d u_1\nu_1(\d z)\d r\\
        &\leq 2^{-1}p\int_s^t\int_{|z|\leq 1}\be[|\sigma(r,X_r^\ve)z|^2]\nu_1(\d z)\d r\\
        &\leq C_{\kappa,p}(t-s).
    \end{align*}
    A combination of the above estimates yields
    \begin{equation}\label{eq:sup E[|X_^ve|^p]}
        \sup_{t\in[s,T]}\be[|X_t^\ve|^p]\leq C_{\kappa,p}(1+|x|^p) +C_{\kappa,p}\int_s^T\be[|X_r^\ve|^p]\d r+C_{\kappa,p}\int_s^T\be[|Y_r^\ve|^p]\d r+C_{\kappa,p,s,T}.
    \end{equation}
    We apply It\^o's formula again to get
    \begin{align*}
        \be[(1+|Y_t^\ve|^2)^{\frac{p}{2}}]&=(1+|y|^2)^{\frac{p}{2}}+\ve^{-1}p\int_s^t\be[\langle(1+|Y_r^\ve|^2)^{\frac{p}{2}-1}Y_r^\ve,f(r/\ve,X_r^\ve,Y_r^\ve)\rangle]\d r\\
        &~~~+\int_s^t\int_{|z|>\ve^{\frac{1}{\alpha}}C_g^\frac{\alpha}{1-\alpha}}\be[(1+|Y_r^\ve+\ve^{-\frac{1}{\alpha}}g(r/\ve,Y_r^\ve)z|^2)^{\frac{p}{2}}-(1+|Y_r^\ve|^2)^{\frac{p}{2}}]\nu_2(\d z)\d r\\
        &~~~+\int_s^t\int_{|z|\leq \ve^{\frac{1}{\alpha}}C_g^\frac{\alpha}{1-\alpha}}\be[(1+|Y_r^\ve+\ve^{-\frac{1}{\alpha}}g(r/\ve,Y_r^\ve)z|^2)^{\frac{p}{2}}-(1+|Y_r^\ve|^2)^{\frac{p}{2}}\\
        &\qquad\qquad\qquad\qquad\qquad\qquad\quad-p\langle(1+|Y_r^\ve|^2)^{\frac{p}{2}-1}Y_r^\ve,\ve^{-\frac{1}{\alpha}}g(r/\ve,Y_r^\ve)z\rangle]\nu_2(\d z)\d r\\
        &=:(1+|y|^2)^{\frac{p}{2}}+I_4+I_5+I_6.
    \end{align*}
    For the term $I_4$, it follows from \eqref{dissipative condition} and \eqref{cond:linear growth} that
    \begin{align*}
        I_4&\leq \ve^{-1}p\int_s^t\be\left[\frac{\langle Y_r^\ve-0,f(r/\ve,X_r^\ve,Y_r^\ve)-f(r/\ve,X_r^\ve,0)\rangle}{(1+|Y_r^\ve|^2)^{1-\frac{p}{2}}}+\frac{\langle Y_r^\ve,f(r/\ve,X_r^\ve,0)\rangle}{(1+|Y_r^\ve|^2)^{1-\frac{p}{2}}}\right]\d r\\
        &\leq \ve^{-1}p\int_s^t\be\left[\frac{-\lambda|Y_r^\ve|^2}{(1+|Y_r^\ve|^2)^{1-\frac{p}{2}}}+\frac{|f(r/\ve,X_r^\ve,0)|}{(1+|Y_r^\ve|^2)^{\frac{1}{2}-\frac{p}{2}}}\right]\d r\\
        &\leq -\ve^{-1}p(\lambda-\varsigma)\int_s^t\be[(1+|Y_r^\ve|^2)^\frac{p}{2}]\d r+\ve^{-1}p\lambda\int_s^t\be[(1+|Y_r^\ve|^2)^{\frac{p}{2}-1}]\d r\\
        &\quad+\ve^{-1}\varsigma^{1-p}(p-1)^{p-1}p^{1-p}\int_s^t\be[|f(r/\ve,X_r^\ve,0)|^p]\d r\\
        &\leq-\ve^{-1}p(\lambda-\varsigma)\int_s^t\be[(1+|Y_r^\ve|^2)^\frac{p}{2}]\d r+\ve^{-1}\varsigma^{1-p}C_{\kappa,p}\int_s^t\be[1+|X_r^\ve|^p]\d r+\ve^{-1}C_{\kappa,p}(t-s),
    \end{align*}
    where $\varsigma>0$, and its explicit value will be provided below. Following the same approach as in the estimates of $I_2$ and $I_3$, we find that
    \begin{align*}
        I_5&= p\int_s^t\int_{|z|>\ve^\frac{1}{\alpha}C_g^\frac{\alpha}{1-\alpha}}\int_0^1\be\left[\frac{\langle Y_r^\ve+\ve^{-\frac{1}{\alpha}}ug(r/\ve,Y_r^\ve)z,\ve^{-\frac{1}{\alpha}}g(r/\ve,Y_r^\ve)z\rangle}{(1+|Y_r^\ve+\ve^{-\frac{1}{\alpha}}ug(r/\ve,Y_r^\ve)z|^2)^{1-\frac{p}{2}}}\right]\d u\nu_2(\d z)\d r\\
        &\leq \ve^{-\frac{1}{\alpha}}p\int_s^t\int_{|z|>\ve^\frac{1}{\alpha}C_g^\frac{\alpha}{1-\alpha}}\int_0^1\be[|g(r/\ve,Y_r^\ve)z||Y_r^\ve+\ve^{-\frac{1}{\alpha}}ug(r/\ve,Y_r^\ve)z|^{p-1}]\d u\nu_2(\d z)\d r\\
        &\leq \ve^{-\frac{1}{\alpha}}p\int_s^t\int_{|z|>\ve^\frac{1}{\alpha}C_g^\frac{\alpha}{1-\alpha}}\be[\Vert g(r/\ve,Y_r^\ve)\Vert|z||Y_r^\ve|^{p-1}+\ve^{-\frac{p-1}{\alpha}}|g(r/\ve,Y_r^\ve)z|^p]\nu_2(\d z)\d r\\
        &\leq  \ve^{-\frac{1}{\alpha}}p\int_s^t\int_{|z|>\ve^\frac{1}{\alpha}C_g^\frac{\alpha}{1-\alpha}}\be[|Y_r^\ve|^p|z|+p^{-p}(p-1)^{p-1}\Vert g(r/\ve,Y_r^\ve)\Vert^p|z|]\nu_2(\d z)\d r\\
        &\quad+\ve^{-\frac{1}{\alpha}}p\int_s^t\int_{|z|>\ve^\frac{1}{\alpha}C_g^\frac{\alpha}{1-\alpha}}\be[\ve^{-\frac{p-1}{\alpha}}\Vert g(r/\ve,Y_r^\ve)\Vert^p|z|^p]\nu_2(\d z)\d r\\
        &\leq \ve^{-1}pC_{\alpha,d_2}S_{d_2}(\alpha-1)^{-1}C_g^\alpha\int_s^t\be[(1+|Y_r^\ve|^2)^\frac{p}{2})]\d r+\ve^{-1}C_{\kappa,p}(t-s),
    \end{align*}
    and
    \begin{align*}
        I_6&\leq p\int_s^t\int_{|z|\leq \ve^\frac{1}{\alpha}C_g^\frac{\alpha}{1-\alpha}}\int_0^1\int_0^1\be\left[\frac{u_1\langle \ve^{-\frac{1}{\alpha}}g(r/\ve,Y_r^\ve)z,\ve^{-\frac{1}{\alpha}}g(r/\ve,Y_r^\ve)z\rangle}{(1+|Y_r^\ve+u_2u_1\ve^{-\frac{1}{\alpha}}g(r/\ve,Y_r^\ve)z|^2)^{1-\frac{p}{2}}}\right]\d u_2\d u_1\nu_2(\d z)\d r\\
        &\leq \ve^{-\frac{2}{\alpha}}2^{-1}p\int_s^t\int_{|z|\leq \ve^\frac{1}{\alpha}C_g^\frac{\alpha}{1-\alpha}}\be[|g(r/\ve,Y_r^\ve)z|^2]\nu_2(\d z)\d r\\
        &\leq \ve^{-1}C_{\kappa,p}C_{\alpha,d_2}S_{d_2}(2-\alpha)^{-1}C_g^\frac{(2-\alpha)\alpha}{1-\alpha}(t-s).
    \end{align*}
    With the choice of $\varsigma=(2^{\alpha-1}-1)C_{\alpha,d_2}S_{d_2}(\alpha-1)^{-1}C_g^\alpha$, the above estimates together give
    \begin{align*}
        \be[|Y_t^\ve|^p]&\leq (1+|y|^2)^\frac{p}{2}-\ve^{-1}p(\lambda-q')\int_s^t\be[(1+|Y_r^\ve|^2)^\frac{p}{2}]\d r\\
        &\quad+\ve^{-1}C_{\kappa,p}\int_s^t\be[1+|X_r^\ve|^p]\d r+\ve^{-1}C_{\kappa,p}(t-s),
    \end{align*}
    where 
    \[q'= 2^{\alpha-1}C_{\alpha,d_2}S_{d_2}(\alpha-1)^{-1}C_g^\alpha<q,\]
    with $q$ defined in Assumption \ref{cond}. Therefore, we derive
    \begin{align*}
        \frac{\d}{\d t}\be[|Y_t^\ve|^p]&\leq -\ve^{-1}p(\lambda-q)\be[|Y_t^\ve|^p]+\ve^{-1}C_{\kappa,p}(1+\be[|X_t^\ve|^p]).
    \end{align*}
    Applying the comparison theorem yields
    \begin{equation}\label{eq:E[|Y_t^ve|^p]}
        \begin{aligned}
        \be[|Y_t^\ve|^p]&\leq \re^{-\frac{p(\lambda-q)(t-s)}{\ve}}|y|^p+\frac{C_{\kappa,p}}{\ve}\int_s^t\re^{-\frac{p(\lambda-q')(t-r)}{\ve}}(1+\be[|X_r^\ve|^p])\d r\\
        &\leq |y|^p+C_{\kappa,p,s,T}\left(1+\sup_{s\leq r\leq t}\be[|X_r^\ve|^p]\right).
    \end{aligned}
    \end{equation}
    Combining the above estimates \eqref{eq:sup E[|X_^ve|^p]} and \eqref{eq:E[|Y_t^ve|^p]}, we have
    \begin{align*}
        \sup_{s\in[t,T]}\be[|X_t^\ve|^p]&\leq C_{\kappa,p,s,T}(1+|x|^p+|y|^p)+C_{\kappa,p,s,T}\int_s^T\sup_{s\leq u\leq r}\be[|X_u|^p]\d r.
    \end{align*}
    Using Grownall's inequality yields
    \[\sup_{s\in[t,T]}\be[|X_t^\ve|^p]\leq C_{\kappa,p,s,T}(1+|x|^p+|y|^p),\]
    which also gives
    \[\sup_{s\in[t,T]}\be[|Y_t^\ve|^p]\leq C_{\kappa,p,s,T}(1+|x|^p+|y|^p).\]
    Hence, the proof is concluded.
\end{proof}

\begin{lemma}\label{lem:X(t+h)-X(t)}
    Suppose that Assumption \ref{cond} holds. Then for any $s<t<t+\Delta\leq T$ with a given $T>s$ and $\Delta> 0$, there exists $C_{\kappa,p,s,T}>0$ such that for all $p\in[1,\alpha)$,
    \begin{equation}\label{eq:X(t+h)-X(t)}
        \be[|X_{t+\Delta}^\ve-X_t^\ve|^p]\leq C_{\kappa,p,s,T}(1+|x|^p+|y|^p)(\Delta^p+\Delta^{\frac{p}{\alpha}})
    \end{equation}
\end{lemma}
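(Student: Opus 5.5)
We write the increment via the integral form of the slow equation,
\[
X_{t+\Delta}^\ve-X_t^\ve=\int_t^{t+\Delta}b(r,X_r^\ve,Y_r^\ve)\,\d r+\int_t^{t+\Delta}\sigma(r,X_{r-}^\ve)\,\d L_r^1=:J_1+J_2,
\]
so that $\be[|X_{t+\Delta}^\ve-X_t^\ve|^p]\leq 2^{p-1}(\be|J_1|^p+\be|J_2|^p)$. Each term is estimated separately, and the two contributions give the two powers $\Delta^p$ and $\Delta^{p/\alpha}$ in \eqref{eq:X(t+h)-X(t)}.

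\textbf{The drift term $J_1$.} We use H\"older's (Jensen's) inequality in time together with the linear growth \eqref{cond:linear growth}:
\[
\be|J_1|^p\leq \Delta^{p-1}\int_t^{t+\Delta}\be|b(r,X_r^\ve,Y_r^\ve)|^p\,\d r\leq C_{\kappa,p}\Delta^{p-1}\int_t^{t+\Delta}\big(1+\be|X_r^\ve|^p+\be|Y_r^\ve|^p\big)\d r .
\]
The uniform moment bounds of Lemma \ref{lem:sup E[|X|] and sup E[|Y|]} then give $\be|J_1|^p\leq C_{\kappa,p,s,T}(1+|x|^p+|y|^p)\Delta^p$. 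These bounds are uniform in $\ve$, which matters because $Y^\ve$ enters through $b$.

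\textbf{The noise term $J_2$, which is the main obstacle.} Since $L^1$ has no finite second (or $\alpha$-th) moment, BDG with the full L\'evy measure is unavailable. We instead truncate the jumps at a $\Delta$-dependent level $\Delta^{1/\alpha}$, matching the self-similar scaling. Using the L\'evy--It\^o decomposition and $\nu_1$ symmetric, write
\[
J_2=\int_t^{t+\Delta}\!\!\int_{|z|\le \Delta^{1/\alpha}}\sigma z\,\tilde N(\d z,\d r)+\int_t^{t+\Delta}\!\!\int_{|z|>\Delta^{1/\alpha}}\sigma z\,N(\d z,\d r).
\]
The compensator correction between the cut-offs $1$ and $\Delta^{1/\alpha}$ vanishes by symmetry. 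For the small-jump part we use $\be|\cdot|^p\leq(\be|\cdot|^2)^{p/2}$, It\^o's isometry and the boundedness \eqref{eq:bounded}:
\[
\Big(\Lambda^2\Delta\int_{|z|\le\Delta^{1/\alpha}}|z|^2\nu_1(\d z)\Big)^{p/2}=C\big(\Delta\cdot\Delta^{(2-\alpha)/\alpha}\big)^{p/2}=C\Delta^{p/\alpha}.
\]
For the large-jump part, $p<\alpha$ but $p>1$, so the naive bound $\be|\int\!\int\cdot\,N|\le\int\!\int\be|\cdot|\,\nu\,\d r$ only controls the first moment. We therefore split it further into a compensated part and a compensator, $N=\tilde N+\nu_1\,\d r$; the compensator integral vanishes by symmetry if we integrate over $\Delta^{1/\alpha}<|z|\le R$ and let $R\to\infty$. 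For the compensated large-jump part we use the inequality $\be|\int\!\int h\,\tilde N|^p\leq C_p\be\int\!\int|h|^p\nu\,\d r$, valid for $p\in[1,2]$. This gives
\[
C_p\Lambda^p\Delta\int_{|z|>\Delta^{1/\alpha}}|z|^p\nu_1(\d z)=C\Delta\cdot\Delta^{(p-\alpha)/\alpha}=C\Delta^{p/\alpha}.
\]
In fact this same $L^p$ inequality can be applied to the whole compensated integral in one step, avoiding the truncation at $1$ entirely.

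The step I expect to need the most care is making these compensator and symmetry manipulations rigorous, together with the convergence of $\int_{|z|>\Delta^{1/\alpha}}|z|^p\nu_1$ (which needs $p<\alpha$). Combining the bounds on $J_1$ and $J_2$ yields \eqref{eq:X(t+h)-X(t)}. The case $p=1$ follows identically, or from the case $p>1$ by Jensen's inequality.
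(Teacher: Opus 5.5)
Your proof is correct and follows essentially the paper's route: Hölder in time plus the uniform moment bounds of Lemma~\ref{lem:sup E[|X|] and sup E[|Y|]} for the drift, and for the noise a jump truncation at level $\Delta^{1/\alpha}$, with an $L^2$ (isometry/BDG) bound for the small jumps and an $L^p$ bound for the large jumps. Your version is slightly more careful than the paper's, which applies the moment bound directly to the uncompensated large-jump integral, whereas you first compensate it, note that the compensator vanishes by symmetry of $\nu_1$, and then use the $p\in[1,2]$ inequality. One caveat: your closing aside is valid only for the large-jump piece. Applied to the full compensated integral over all $z$, the bound would involve $\int_{|z|\le 1}|z|^p\nu_1(\d z)=\infty$ (since $p<\alpha$), so the split at $\Delta^{1/\alpha}$ with the $L^2$ estimate below it is indispensable.
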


\begin{proof}
    Notice that
    \[X_{t+\Delta}^\ve-X_t^\ve=\int_t^{t+\Delta}b(r,X_r^\ve,Y_r^\ve)\d r+\int_t^{t+\Delta}\sigma(r,X_r^\ve)\d L_r^1.\]
    Let $\mathbb{L}$ denote the second term on the right-hand side of the above equality. By L\'evy-It\^o's decomposition, BDG's inequality and Kunita's inequality, it follows that
    \begin{align*}
        \be[|\mathbb{L}|^p]&\leq C_p\be\left[\left|\int_t^{t+\Delta}\int_{|z|\leq\Delta^\frac{1}{\alpha}}\sigma(r,X_r^\ve)z\tilde{N}(\d z,\d r)\right|^p\right]+C_p\be\left[\left|\int_t^{t+\Delta}\int_{|z|>\Delta^\frac{1}{\alpha}}\sigma(r,X_r^\ve)zN(\d z,\d r)\right|^p\right]\\
        &\leq C_p\be\left[\int_t^{t+\Delta}\int_{|z|\leq\Delta^\frac{1}{\alpha}}|\sigma(r,X_r^\ve)z|^2\nu_1(\d z)\d r\right]^\frac{p}{2}+C_p\be\left[\int_t^{t+\Delta}\int_{|z|>\Delta^\frac{1}{\alpha}}|\sigma(r,X_r^\ve)z|^p\nu_1(\d z)\d r\right]\\
        &\leq C_{\kappa,p}\Delta^\frac{p}{\alpha}.
    \end{align*}
    Then we derive from the H\"older's inequality, \eqref{eq:bounded} and Lemma \ref{lem:sup E[|X|] and sup E[|Y|]} that
    \begin{align*}
        \be[|X_{t+\Delta}^\ve-X_t^\ve|^p]&\leq \Delta^{p-1}\int_t^{t+\Delta}\be[|b(r,X_r^\ve,Y_r^\ve)|^p]\d r+\be\left[\left|\int_t^{t+\Delta}\sigma(r,X_r^\ve)\d L_r^1\right|^p\right]\\
        &\leq C\Delta^{p-1}\int_t^{t+\Delta}\be[1+|X_r^\ve|^p+|Y_r^\ve|^p]\d r+C_{\kappa,p}\Delta^\frac{p}{\alpha}\\
        &\leq C_{\kappa,p,s,T}(1+|x|^p+|y|^p)\Delta^p+C_{\kappa,p}\Delta^{\frac{p}{\alpha}}.
    \end{align*}
    This finishes the proof.
\end{proof}

\subsection{The auxiliary process $\hat{Y}_t^\ve$ for the fast subsystem $Y_t^\ve$}\label{subsec:The auxiliary process hat{Y}_t^ve for the fast system}
Based on Khasminskii's time-discretization approach \cite{K68}, we construct an auxiliary process $\hat{Y}_t^\ve\in\br^m$. Choosing a step size $\Delta=\frac{\tau_2}{N}$ for some $N\in\bn$, where $\Delta$ depends on $\ve$, and we will provide its specific value below. For any $t\in[s+k\Delta, (s+(k+1)\Delta)\wedge T]$, we define the process $\hat{Y}_t^\ve$ with initial value $\hat{Y}_s^\ve=Y_s^\ve=y$ by 
\begin{equation*}
    \hat{Y}_t^\ve=\hat{Y}_{s+k\Delta}^\ve+\ve^{-1}\int_{s+k\Delta}^tf(r/\ve,X_{s+k\Delta}^\ve,\hat{Y}_r^\ve)\d r+\ve^{-\frac{1}{\alpha}}\int_{s+k\Delta}^tg(r/\ve,\hat{Y}_r^\ve)\d L_r^2,
\end{equation*}
which is equivalently written as
\begin{equation*}
    \hat{Y}_t^\ve=y+\ve^{-1}\int_s^tf(r/\ve,X_{r_\Delta}^\ve,\hat{Y}_r^\ve)\d r+\ve^{-\frac{1}{\alpha}}\int_s^tg(r/\ve,\hat{Y}_r^\ve)\d L_r^2,
\end{equation*}
where $t_\Delta=s+\lfloor (t-s)/\Delta\rfloor\Delta$, and $\lfloor\cdot\rfloor$ denotes the floor function.

\begin{lemma}
    Suppose that Assumption \ref{cond} holds. Then there exists $C_{\kappa,p,s,T}>0$ such that for all $p\in[1,\alpha)$,
    \begin{equation}
        \sup_{\ve\in(0,1)}\sup_{t\in[s,T]}\be[|\hat{Y}_t^\ve|^p]\leq C_{\kappa,p,s,T}(1+|x|^p+|y|^p).
    \end{equation}
\end{lemma}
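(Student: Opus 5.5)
We follow the proof of the $Y_t^\ve$-estimate in Lemma \ref{lem:sup E[|X|] and sup E[|Y|]} almost verbatim. The only change is that the slow argument of $f$ is the frozen value $X_{r_\Delta}^\ve$ instead of $X_r^\ve$. Since $\hat Y^\ve$ is obtained by concatenating solutions of a Lipschitz SDE over the intervals $[s+k\Delta,s+(k+1)\Delta)$, it is a well-defined c\`adl\`ag adapted process, and It\^o's formula applies to $(1+|\hat Y_t^\ve|^2)^{p/2}$ for $p\in(1,\alpha)$. The case $p=1$ either follows in the same way or from Jensen's inequality applied to a larger $p$.

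\textbf{Step 1: It\^o expansion.} We decompose as before into a drift term $I_4'$, a large-jump term $I_5'$ over $|z|>\ve^{1/\alpha}C_g^{\alpha/(1-\alpha)}$, and a compensated small-jump term $I_6'$.

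\textbf{Step 2: the jump terms.} The terms $I_5'$ and $I_6'$ involve only $g$, which does not depend on the slow variable. Hence they are bounded exactly as $I_5,I_6$, using Lemma \ref{lem:H(y,z)} and \eqref{eq:bounded}:
\[
I_5'+I_6'\le \ve^{-1}pC_{\alpha,d_2}S_{d_2}(\alpha-1)^{-1}C_g^\alpha\int_s^t\be[(1+|\hat Y_r^\ve|^2)^{p/2}]\d r+\ve^{-1}C_{\kappa,p}(t-s).
\]

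\textbf{Step 3: the drift term.} For $I_4'$ we write
\[
f(r/\ve,X_{r_\Delta}^\ve,\hat Y_r^\ve)=\bigl[f(r/\ve,X_{r_\Delta}^\ve,\hat Y_r^\ve)-f(r/\ve,X_{r_\Delta}^\ve,0)\bigr]+f(r/\ve,X_{r_\Delta}^\ve,0).
\]
The dissipativity \eqref{dissipative condition} and Young's inequality with the same $\varsigma$ then give
\[
I_4'\le-\ve^{-1}p(\lambda-\varsigma)\int_s^t\be[(1+|\hat Y_r^\ve|^2)^{p/2}]\d r+\ve^{-1}C_{\kappa,p}\int_s^t\bigl(1+\be[|X_{r_\Delta}^\ve|^p]\bigr)\d r,
\]
where we used \eqref{cond:linear growth} in the form $|f(t,x,0)|\le C(1+|x|)$.

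\textbf{Step 4: closing the estimate.} Combining the three bounds and using $\lambda>q>q'$ yields the differential inequality
\[
\frac{\d}{\d t}\be[|\hat Y_t^\ve|^p]\le-\ve^{-1}p(\lambda-q)\be[|\hat Y_t^\ve|^p]+\ve^{-1}C_{\kappa,p}\Bigl(1+\sup_{r\in[s,T]}\be[|X_r^\ve|^p]\Bigr),
\]
understood in integrated form. Lemma \ref{lem:sup E[|X|] and sup E[|Y|]} already gives $\sup_{\ve}\sup_{r\in[s,T]}\be[|X_r^\ve|^p]\le C_{\kappa,p,s,T}(1+|x|^p+|y|^p)$, and this bound covers the frozen times $r_\Delta\in[s,T]$. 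So, unlike in Lemma \ref{lem:sup E[|X|] and sup E[|Y|]}, no Gronwall coupling with $X$ is needed. The comparison theorem gives
\[
\be[|\hat Y_t^\ve|^p]\le \re^{-p(\lambda-q)(t-s)/\ve}|y|^p+\frac{C}{\ve}\int_s^t\re^{-p(\lambda-q)(t-r)/\ve}\d r\,(1+|x|^p+|y|^p).
\]
The factor $\ve^{-1}$ is absorbed by $\ve^{-1}\int_s^t \re^{-c(t-r)/\ve}\d r\le 1/c$, which is where uniformity in $\ve$ and in $\Delta$ comes from.

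\textbf{Main obstacle.} The delicate point is making sure the $\ve^{-1}$-sized jump contributions in Step 2 are dominated by the dissipation. This is exactly what the choice of $\varsigma$ and the condition $\lambda>q$ handle, and since $g$ is unchanged those constants carry over verbatim. A minor technical point is justifying the finiteness of $\be[|\hat Y_t^\ve|^p]$ before applying the comparison argument. We would handle it with a standard localization by stopping times $\inf\{t:|\hat Y_t^\ve|>R\}$, then let $R\to\infty$ using Fatou's lemma.
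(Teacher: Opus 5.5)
Your proposal is correct and follows the paper's approach. The paper omits this proof, saying only that it is analogous to the $Y_t^\ve$-estimate in Lemma \ref{lem:sup E[|X|] and sup E[|Y|]}, and your adaptation supplies those details: the jump terms are unchanged because $g$ is, dissipativity handles the drift with the frozen argument $X_{r_\Delta}^\ve$, and the already established uniform bound on $\be[|X_r^\ve|^p]$ replaces the coupled Gronwall step, with $\ve^{-1}\int_s^t\re^{-c(t-r)/\ve}\d r\le 1/c$ giving uniformity in $\ve$ and $\Delta$.
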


\begin{proof}
    The details are omitted here by analogy with the proof of Lemma \ref{lem:sup E[|X|] and sup E[|Y|]}.
\end{proof}

\begin{lemma}\label{lem:Y(t)-hatY(t)}
    Suppose that Assumption \ref{cond} holds. Then there exists $C_{\kappa,p,s,T}>0$ such that for all $p\in[1,\alpha)$,
    \begin{equation}
        \sup_{\ve\in(0,1)}\sup_{t\in[s,T]}\be[|Y_t^\ve-\hat{Y}_t^\ve|^p]\leq C_{\kappa,p,s,T}(1+|x|^p+|y|^p)(\Delta^p+\Delta^\frac{p}{\alpha}).
    \end{equation}
\end{lemma}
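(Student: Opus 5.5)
The plan is to write the equation for the difference $Z_t:=Y_t^\ve-\hat{Y}_t^\ve$, which satisfies $Z_s=0$ and
\begin{equation*}
\d Z_t=\ve^{-1}\big[f(t/\ve,X_t^\ve,Y_t^\ve)-f(t/\ve,X_{t_\Delta}^\ve,\hat{Y}_t^\ve)\big]\d t+\ve^{-\frac{1}{\alpha}}\big[g(t/\ve,Y_t^\ve)-g(t/\ve,\hat{Y}_t^\ve)\big]\d L_t^2 .
\end{equation*}
I will apply It\^o's formula to $(\theta+|Z_t|^2)^{p/2}$ for a small $\theta>0$, letting $\theta\downarrow0$ at the end. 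This mirrors the $Y$-estimate in Lemma \ref{lem:sup E[|X|] and sup E[|Y|]}. The jump part is split at the same threshold $|z|=\ve^{1/\alpha}C_g^{\alpha/(1-\alpha)}$. The point of this choice is that after the substitution $z\mapsto \ve^{1/\alpha}z$ every jump contribution carries exactly the factor $\ve^{-1}$, matching the drift.

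For the drift, I split
\begin{equation*}
f(t/\ve,X_t^\ve,Y_t^\ve)-f(t/\ve,X_{t_\Delta}^\ve,\hat Y_t^\ve)=\big[f(\cdot,X_t^\ve,Y_t^\ve)-f(\cdot,X_t^\ve,\hat Y_t^\ve)\big]+\big[f(\cdot,X_t^\ve,\hat Y_t^\ve)-f(\cdot,X_{t_\Delta}^\ve,\hat Y_t^\ve)\big].
\end{equation*}
The first bracket is handled by the dissipativity \eqref{dissipative condition} and gives $-\ve^{-1}p\lambda\,\be[|Z|^2(\theta+|Z|^2)^{p/2-1}]$. The second is bounded by $C|X_t^\ve-X_{t_\Delta}^\ve|$ using \eqref{cond:Lip}. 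Young's inequality with a small parameter $\varsigma$ then yields
\begin{equation*}
\ve^{-1}\varsigma\,\be|Z_t|^p+\ve^{-1}C_{\varsigma}\,\be|X_t^\ve-X_{t_\Delta}^\ve|^p .
\end{equation*}

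For the noise terms, the Lipschitz bound $\|g(\cdot,Y)-g(\cdot,\hat Y)\|\le C_g|Z|$ is used throughout. For large jumps I argue as for $I_5$, via the first-order Taylor formula and $|Z+uDz|^{p-1}\le |Z|^{p-1}+|Dz|^{p-1}$. For small jumps I use Lemma \ref{lem:H(y,z)}, with $(\theta+|v|^2)^{p/2-1}$ handled as for $I_6$. The crucial difference from the moment estimate is that there is no additive constant here, since $D=g(Y)-g(\hat Y)$ vanishes with $Z$. Every contribution is therefore of the form $\ve^{-1}c\,\be|Z|^p$, with $c$ a combination of $C_{\alpha,d_2}S_{d_2}$, $(2-\alpha)^{-1}$, $(\alpha-1)^{-1}$, $(\alpha-p)^{-1}$ and powers of $C_g$. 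The constant $q$ in (A4) is designed precisely so that the total is at most $q\,\ve^{-1}\be|Z|^p$.

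Collecting the terms and letting $\theta\to0$ gives
\begin{equation*}
\tfrac{\d}{\d t}\be|Z_t|^p\le -\ve^{-1}p(\lambda-q-\varsigma)\be|Z_t|^p+\ve^{-1}C_{\kappa,p}\be|X_t^\ve-X_{t_\Delta}^\ve|^p,\qquad \be|Z_s|^p=0.
\end{equation*}
Choose $\varsigma<\lambda-q$. By Lemma \ref{lem:X(t+h)-X(t)}, the forcing term is at most $C(1+|x|^p+|y|^p)(\Delta^p+\Delta^{p/\alpha})$ uniformly in $t$. The comparison theorem then gives
\begin{equation*}
\be|Z_t|^p\le \frac{C}{\ve}\int_s^t \re^{-c(t-r)/\ve}\d r\,(\Delta^p+\Delta^{p/\alpha})(1+|x|^p+|y|^p),
\end{equation*}
and the $\ve^{-1}$ cancels against the integral $\int_s^t\re^{-c(t-r)/\ve}\d r\le \ve/c$, so the bound is uniform in $\ve$. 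The case $p=1$ is treated in the same way, or deduced by Jensen's inequality.

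The main obstacle is the bookkeeping of the jump terms. One must verify that the threshold $\ve^{1/\alpha}C_g^{\alpha/(1-\alpha)}$ makes the small-jump, large-jump, and $|z|^p$-tail contributions sum to at most $q$. The delicate ones are the mixed term $|Z|^{p-1}|Dz|$ and the tail $\int|z|^p\nu_2$, which produces the factor $(\alpha-p)^{-1}$. Together these must give a constant below $\lambda$, so that dissipativity absorbs them.
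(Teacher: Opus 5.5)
Your strategy matches the paper's: It\^o's formula for $(\theta+|Z_t|^2)^{p/2}$, dissipativity plus Young's inequality for the drift, a first-order Taylor bound on large jumps, Lemma \ref{lem:H(y,z)} on small jumps, then $\theta\downarrow0$, the comparison theorem and Lemma \ref{lem:X(t+h)-X(t)}. However, the step you defer as \emph{bookkeeping} fails with the cut-off $\ve^{1/\alpha}C_g^{\alpha/(1-\alpha)}$. That threshold was tuned, in the moment estimate for $Y_t^\ve$, to a mixed term $|Y|^p|z|$ that carries no factor $C_g$. For the difference process, every jump term carries the Lipschitz factor $|Dz|\le\ve^{-1/\alpha}C_g|Z||z|$, where $D=\ve^{-1/\alpha}(g(\cdot,Y_t^\ve)-g(\cdot,\hat Y_t^\ve))$. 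With your cut-off, the two large-jump pieces come out as $\ve^{-1}pC_{\alpha,d_2}S_{d_2}$ times $C_g^{1+\alpha}(\alpha-1)^{-1}$ and $C_g^{(\alpha^2-p)/(\alpha-1)}(\alpha-p)^{-1}$. These are not multiples of $C_g^\alpha$ and they exceed $q$ once $C_g$ is large, so $\lambda>q$ no longer gives a negative net rate. On small jumps, handling $(\theta+|v|^2)^{p/2-1}$ \emph{as for $I_6$} (where $\theta=1$ and the factor is bounded by $1$) here gives the bound $\theta^{p/2-1}$. That leaves $\theta^{p/2-1}|Z|^2$, which blows up as $\theta\downarrow0$ and is not controlled by $|Z|^p$. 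To obtain $|Z|^p$ you need $|v|=|Z+u_1u_2Dz|\ge|Z|/2$, i.e.\ $\ve^{-1/\alpha}C_g|z|\le1/2$, and your cut-off guarantees this only when $C_g\ge2^{\alpha-1}$.

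The repair is the paper's cut-off $|z|=\ve^{1/\alpha}/(2C_g)$, which is scaled to $C_g$ precisely because the jump size of the difference is proportional to $C_g|Z|$. On $\{|z|\le\ve^{1/\alpha}/(2C_g)\}$ you get $|Dz|\le|Z|/2$, hence $|Dz|^2(\theta+|v|^2)^{p/2-1}\le2^{2-p}C_g^2\ve^{-2/\alpha}|z|^2|Z|^p$. The three jump contributions then become $\ve^{-1}pC_{\alpha,d_2}S_{d_2}C_g^\alpha$ times $2^{\alpha-1}(\alpha-1)^{-1}$, $2^{\alpha-p}(\alpha-p)^{-1}$ and $2^{\alpha-p}(2-\alpha)^{-1}$. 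Since $2^{\alpha-p}<2^{\alpha-1}$, their sum is strictly below $q$, which leaves room for your $\varsigma$. The rest of your argument goes through as written, including the $p=1$ case via Jensen.
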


\begin{proof}
    We consider only the case of $p\in(1,\alpha)$, as the case of $p=1$ is similar. For simplicity, we set $Z_t^\ve:= Y_t^\ve-\hat{Y}_t^\ve$, $\bar{f}_t=f(t/\ve,X_t^\ve,Y_t^\ve)-f(t/\ve,X_{t_\Delta}^\ve,\hat{Y}_t^\ve)$ and $\hat{g}_t:=g(t/\ve,Y_t^\ve)-g(t/\ve,\hat{Y}_t^\ve)$. Notice that
    \begin{align*}
        Z_t^\ve&=\ve^{-1}\int_s^t\bar{f}_r\d r+\ve^{-\frac{1}{\alpha}}\int_s^t\hat{g}_r\d L_r^2.
    \end{align*}
     For any fixed $\theta>0$ and any $p\in(1,\alpha)$, let $V(x)=(\theta+|x|^2)^\frac{p}{2}$. Applying It\^o's formula then gives
    \begin{align*}\label{eq:Lpp 1}
    \be[V(Z_t^\ve)]&=\theta^{\frac{p}{2}}+\ve^{-1}p\int_s^t\be[(\theta+|Z_r^\ve|^2)^{\frac{p}{2}-1}\langle Z_r^\ve,\bar{f}_r\rangle]\d r\\
    &\quad+\int_s^t\int_{|z|>\frac{\ve^\frac{1}{\alpha}}{2C_g}}\be[V(Z_r^\ve+\ve^{-\frac{1}{\alpha}}\hat{g}_rz)-V(Z_r^\ve)]\nu_2(\d z)\d r\\
    &\quad+\int_s^t\int_{|z|\leq \frac{\ve^\frac{1}{\alpha}}{2C_g}}\be[V(Z_r^\ve+\ve^{-\frac{1}{\alpha}}\hat{g}_rz)-V(Z_r^\ve)-\langle \nabla V(Z_r^\ve),\ve^{-\frac{1}{\alpha}}\hat{g}z\rangle]\nu_2(\d z)\d r\\
    &=:\theta^{\frac{p}{2}}+I_1+I_2+I_3.
    \end{align*}
    For the first term, we apply \eqref{dissipative condition}, Young's inequality, and \eqref{cond:Lip} to obtain that
    \begin{align*}
        I_1&=\ve^{-1}p\int_s^t\be[(\theta+|Z_r^\ve|^2)^{\frac{p}{2}-1}(\langle Z_r^\ve,\hat{f}_r\rangle+\langle Z_r^\ve,\tilde{f}_r\rangle)]\d r\\
        &\leq \ve^{-1}p\int_s^t\be[-\lambda|Z_r^\ve|^2(\theta+|Z_u^\ve|^2)^{\frac{p}{2}-1}+|\tilde{f}_r|(\theta+|Z_r^\ve|^2)^\frac{p-1}{2}]\d r\\
        &\leq -\ve^{-1}p\lambda\int_s^t\be[V(Z_r^\ve)]\d r+\ve^{-1}p\lambda\theta^\frac{p}{2}(t-s)\\
        &\quad+\ve^{-1}C_p\varsigma^{1-p}\int_s^t\be[|\tilde{f}_r|^p]\d r+\ve^{-1}p\varsigma \int_s^t\be[V(Z_r^\ve)]\d r\\
        &\leq -\ve^{-1}p(\lambda-\varsigma)\int_s^t\be[V(Z_r^\ve)]\d r+\ve^{-1}p\lambda\theta^\frac{p}{2}(t-s)\\
        &\quad+\ve^{-1}C_{\kappa,p}\varsigma^{1-p}\int_s^t\be[|X_r^\ve-X_{r_\Delta}^\ve|^p]\d r,
    \end{align*}
    where $\hat{f}_t:=f(t/\ve,X_t^\ve,Y_t^\ve)-f(t/\ve,X_t^\ve,\hat{Y}_t^\ve)$ and $\tilde{f}_t=f(t/\ve,X_t^\ve,\hat{Y}_t^\ve)-f(t/\ve,X_{t_\Delta}^\ve,\hat{Y}_t^\ve)$. For the second term, by \eqref{cond:Lip} again, we obtain
    \begin{align*}
        I_2&= p\int_s^t\int_{|z|>\frac{\ve^\frac{1}{\alpha}}{2C_g}}\int_0^1\be\left[\frac{\langle Z_r^\ve+u\ve^{-\frac{1}{\alpha}}\hat{g}z,\ve^{-\frac{1}{\alpha}}\hat{g}z\rangle}{(\theta+|Z_r^\ve+u\ve^{-\frac{1}{\alpha}}\hat{g}z|^2)^{1-\frac{p}{2}}}\right]\d u\nu_2(\d z)\d r\\
        &\leq\ve^{-\frac{1}{\alpha}}p\int_s^t\int_{|z|>\frac{\ve^\frac{1}{\alpha}}{2C_g}}\int_0^1\be[|\hat{g}z||Z_r^\ve+u\ve^{-\frac{1}{\alpha}}\hat{g}z|^{p-1}]\d u\nu_2(\d z)\d r\\
        &\leq \ve^{-\frac{1}{\alpha}}p\int_s^t\int_{|z|>\frac{\ve^\frac{1}{\alpha}}{2C_g}}\be[\Vert \hat{g}\Vert |Z_r^\ve|^{p-1}|z|+\ve^{-\frac{p-1}{\alpha}}|\hat{g}z|^p]\nu_2(\d z)\d r\\
        &\leq \ve^{-\frac{1}{\alpha}}p\int_s^t\int_{|z|>\frac{\ve^\frac{1}{\alpha}}{2C_g}}\be[C_g|Z_r^\ve|^p|z|+\ve^{-\frac{p-1}{\alpha}}C_g^p|Z_r^\ve|^p|z|^p]\nu_2(\d z)\d r\\
        &\leq \ve^{-1}pC_{\alpha,d_2}S_{d_2}(2^{\alpha-1}(\alpha-1)^{-1}+2^{\alpha-p}(\alpha-p)^{-1})C_g^\alpha\int_s^t\be[V(Z_r^\ve)]\d r.
    \end{align*}
    For the third term, Lemma \ref{lem:H(y,z)} and \eqref{cond:Lip} yield
    \begin{align*}
    I_3&\leq\int_s^t\int_{|z|<\frac{\ve^\frac{1}{\alpha}}{2C_g}}\int_0^1\int_0^1\be\left[\frac{p\langle u_1\ve^{-\frac{1}{\alpha}}\hat{g}z,\ve^{-\frac{1}{\alpha}}\hat{g}z\rangle}{(\theta+|Z_r^\ve+u_2u_1\ve^{-\frac{1}{\alpha}}\hat{g}z|^2)^{1-\frac{p}{2}}}\right]\d u_2\d u_1\nu_2(\d z)\d r\\
    &\leq \ve^{-\frac{2}{\alpha}}p\int_s^t\int_{|z|\leq\frac{\ve^\frac{1}{\alpha}}{2C_g}}\be[\Vert \hat{g}\Vert^2|z|^2(\theta+1/4|Z_r^\ve|^2)^{\frac{p}{2}-1}\b1_{\{|Z_r^\ve|>0\}}]\nu_2(\d z)\d r\\
    &\leq\ve^{-\frac{2}{\alpha}}pC_g^2\int_s^t\int_{|z|\leq\frac{\ve^\frac{1}{\alpha}}{2C_g}}\be[ 2^{2-p}|Z_r^\ve|^p+4\theta^\frac{p}{2}]|z|^2\nu_2(\d z)\d r\\
    &\leq 2^{\alpha-p}\ve^{-1}p C_{\alpha,d_2}S_{d_2}(2-\alpha)^{-1}C_g^\alpha\int_s^t\be[V(Z_r^\ve)]\d r+2^\alpha\ve^{-1}pC_g^\alpha\theta^\frac{p}{2}(t-s).
\end{align*}
    Combining the above estimates with $\varsigma=C_{\alpha,d_2}S_{d_2}(2^{\alpha-1}-2^{\alpha-p})(\alpha-p)^{-1}C_g^\alpha$ and letting $\theta$ tend to $0$, we get
    \begin{align*}
        \frac{\d}{\d t}\be[|Z_t^\ve|^p] &\leq -\ve^{-1}p(\lambda-q')\be[|Z_t^\ve|^p]+\ve^{-1}C_{\kappa,p}\be[|X_t^\ve-X_{t_\Delta}^\ve|^p],
    \end{align*}
where 
\[q'=C_{\alpha,d_2}S_{d_2}(2^{\alpha-1}(\alpha-1)^{-1}+2^{\alpha-1}(\alpha-p)^{-1}+2^{\alpha-p}(2-\alpha)^{-1})C_g^\alpha<q.\]
Using the comparison theorem and Lemma \ref{lem:X(t+h)-X(t)}, it follows that
\begin{align*}
    \be[|Y_t^\ve-\hat{Y}_t^\ve|^p]&\leq \ve^{-1}C_{\kappa,p}\int_s^t\re^{-\frac{p(\lambda-q)(t-s)}{\ve}}\be[|X_r^\ve-X_{r_\Delta}^\ve|^p]\d r\\
    &\leq C_{\kappa,p,s,T}(1+|x|^p+|y|^p)(\Delta^p+\Delta^{\frac{p}{\alpha}}).
\end{align*}
This completes the proof.
\end{proof}

\subsection{Ergodicity for the time-inhomogeneous frozen equation}\label{subsec:Ergodicity for the fast system with frozen slow variable}
For any fixed slow variable $x\in\br^n$, we introduce the time-inhomogeneous frozen equation
\begin{equation}\label{frozen SDE}
\begin{cases}
\d Y_t=f(t,x,Y_t)\d t+g(t,Y_t) \d L_t^2,\\
Y_s=y.
\end{cases}
\end{equation}
Under Assumption \ref{cond}, it is well known that there exists a unique solution to the system \eqref{frozen SDE}, denoted by $Y_t^{s,y,x}$. Let $P^{x}(s,t)$ be the two parameter Markov semigroup corresponding to the process $Y_t^{s,y,x}$. Specifically, for any $t\geq s$, the operator is defined by
\[P^{x}(s,t)\vp(y):=\be[\vp(Y_t^{s,y,x})],\]
where $\vp:\br^m\to\br$ is a bounded measurable function. Let $\cp(\br^m)$ be the space of probability measures on $(\br^m,\cb(\br^m))$. For any $1\leq p<\alpha$, define
\[\cm_p:=\left\{\rho_t^{x}:\br\to\cp(\br^m)\left|~\sup_{t\in\br}\int_{\br^m}|w|^p\rho_t^x(\d w)<+\infty\right.\right\}.\]
We now introduce the 1-Wasserstein distance to measure the distance between two elements of $\cp(\br^m)$, i.e., for any $\mu_1,\mu_2\in\cm_1$, 
\begin{equation*}
    \cw_1(\mu_1,\mu_2)=\inf_{\gamma\in\cd(\mu_1,\mu_2)}\int_{\br^m\times\br^m}|x-y|\gamma(\d x,\d y),
\end{equation*}
where $\cd(\mu_1,\mu_2)$is the set of all joint probability measures with marginals $\mu_1$ and $\mu_2$. Its dual form (Kantorovich-Rubinstein duality) is defined by
\begin{align}
    \cw_1(\mu_1,\mu_2)=\sup_{h\in {Lip}_1}\left|\int_{\br^m}h(x)\mu_1(\d x)-\int_{\br^m}h(x)\mu_2(\d x)\right|,
\end{align}
where $Lip_1:=\{h: \br^m\to\br^m||h(x)-h(y)|\leq |x-y|\}$. To this end, we make use of the notion of periodic measures to describe the long-time behaviour of the time-inhomogeneous frozen equation. A detailed discussion of this concept can be found in \cite{FZ20}.

\begin{definition}
    A measure-valued function $\rho_t^{x}$ is said to be a $\tau$-periodic measure for the two parameter Markov semigroup $P^{x}(s,t)$ if it satisfies
    \begin{align*}
        \rho_{t+\tau}^{x}=\rho_t^{x}~~~\mbox{and}~~~P^{*,x}(s,t)\rho_s^{x}=\int_{\br^m}P^x(s,w,t,\cdot)\rho_s^x(\d w)=\rho_t^{x}.
    \end{align*}
\end{definition}

\begin{lemma}\label{lem:sup E[Y_t]}
    Suppose that Assumption \ref{cond} holds. Then for any $x\in\br^n$, $y\in\br^m$ and $s,t\in\br$, there exists $C_{\kappa,p}>0$ such that for all $p\in[1,\alpha)$,
    \begin{equation}
        \be[|Y_t^{s,y,x}|^p]\leq C_{\kappa,p}(1+|x|^p+\re^{-p(\lambda-q)(t-s)}|y|^p),
    \end{equation}
\end{lemma}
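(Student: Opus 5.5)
We will run the Lyapunov argument of Lemma \ref{lem:sup E[|X|] and sup E[|Y|]} for the frozen equation \eqref{frozen SDE}. Here the time scale is $1$ instead of $\ve$, and the slow variable is the deterministic parameter $x$. Fix $p\in(1,\alpha)$; the case $p=1$ is analogous. Apply It\^o's formula to $V(Y_t)=(1+|Y_t|^2)^{p/2}$, where $Y_t=Y_t^{s,y,x}$. Split the jump part at a threshold $|z|=c_g:=C_g^{\alpha/(1-\alpha)}$, the unscaled version of the cutoff used there. This gives
\[
\be[V(Y_t)]=V(y)+I_4+I_5+I_6,
\]
with $I_4,I_5,I_6$ the drift, large-jump and compensated small-jump terms.

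\textbf{Drift term.} Write $f(r,x,Y_r)=[f(r,x,Y_r)-f(r,x,0)]+f(r,x,0)$. Then use \eqref{dissipative condition} for the first bracket. For the second, use \eqref{cond:linear growth}, which gives $|f(r,x,0)|\le C(1+|x|)$, together with Young's inequality with parameter $\varsigma$. This yields
\[
I_4\le -p(\lambda-\varsigma)\int_s^t\be[V(Y_r)]\d r+C_{\kappa,p}\varsigma^{1-p}\int_s^t(1+|x|^p)\d r .
\]

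\textbf{Jump terms.} Exactly as for $I_5$ and $I_6$ in Lemma \ref{lem:sup E[|X|] and sup E[|Y|]}, with $\ve=1$, we use \eqref{eq:bounded} and Lemma \ref{lem:H(y,z)}. This bounds $I_5+I_6$ by
\[
pC_{\alpha,d_2}S_{d_2}(\alpha-1)^{-1}C_g^\alpha\int_s^t\be[V(Y_r)]\d r+C_{\kappa,p}(t-s).
\]

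\textbf{Combining.} With the same choice of $\varsigma$ as before we get
\[
\frac{\d}{\d t}\be[V(Y_t)]\le -p(\lambda-q')\be[V(Y_t)]+C_{\kappa,p}(1+|x|^p),
\]
where $q'<q$. Since $\lambda>q>q'$, this is a genuine contraction. The comparison theorem then gives
\[
\be[V(Y_t)]\le \re^{-p(\lambda-q)(t-s)}V(y)+\frac{C_{\kappa,p}(1+|x|^p)}{p(\lambda-q)}\bigl(1-\re^{-p(\lambda-q)(t-s)}\bigr).
\]
Finally, $|Y_t|^p\le V(Y_t)$ and $V(y)\le C_p(1+|y|^p)$. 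The term $\re^{-p(\lambda-q)(t-s)}\cdot 1$ is absorbed into the constant, which yields the stated bound for $t\ge s$.

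\textbf{Main obstacle.} The delicate point is that the constant must be independent of $t-s$, unlike in Lemma \ref{lem:sup E[|X|] and sup E[|Y|]}. So there must be no Gronwall step. Every positive contribution of $\be[V(Y_r)]$ coming from the jumps must therefore be absorbed into the dissipation $-p\lambda$. This is exactly where the margin $\lambda>q$ from (A4) is used.

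The remaining positive terms must be bounded by constants times $(1+|x|^p)$ only. In particular, the $\theta$-type term $p\lambda\int_s^t 1\,\d r$ coming from $(1+|Y|^2)^{p/2-1}\le 1$ must be kept inside the linear ODE inequality rather than integrated crudely. Then the exponential factor converts all such constant terms into a bounded contribution $C/(p(\lambda-q))$.
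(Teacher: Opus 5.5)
Your proposal is correct and follows essentially the same route as the paper's proof. Both apply It\^o's formula to $(1+|y|^2)^{p/2}$, handle the drift with dissipativity plus Young's inequality, bound the small jumps via Lemma \ref{lem:H(y,z)}, and then apply the comparison theorem directly, without any Gronwall step. The one cosmetic difference is the jump cutoff: you split at $|z|=C_g^{\alpha/(1-\alpha)}$, mirroring the earlier a priori estimate for $Y_t^\ve$, whereas the paper splits at $|z|=1$ and recovers the $C_g^\alpha$ factor through a weighted Young inequality; either way the $\be[V]$-coefficient is some $q'<q$, and nothing of substance changes.
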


\begin{proof}
    We consider only the case of $p\in(1,\alpha)$, as the case of $p=1$ is similar. Define $V(y)=(1+|y|^2)^\frac{p}{2}$. Applying It\^o's formula yields
\begin{align*}
    \be[V(Y_t^{s,y,x})]&=(1+|y|^2)^{\frac{p}{2}}+p\int_s^t\be\left[\frac{\langle Y_r^{s,y,x},f(r,x,Y_r^{s,y,x})\rangle}{(1+|Y_r^{s,y,x}|^2)^{1-\frac{p}{2}}}\right]\d r\\
    &\quad+\int_s^t\int_{|z|>1}\be[V(Y_r^{s,y,x}+g(r,Y_r^{s,y,x})z)-V(Y_r^{s,y,x})]\nu_2(\d z)\d r\\
    &\quad+\int_s^t\int_{|z|\leq1}\be[V(Y_{r}^{s,y,x}+g(r,Y_r^{s,y,x})z)-V(Y_r^{s,y,x})\\
    &\qquad\qquad\qquad\qquad\qquad~-\langle \nabla V(Y_r^{s,y,x}),g(r,Y_r^{s,y,x})z\rangle]\nu_2(\d z)\d r\\
    &=:(1+|y|^2)^{\frac{p}{2}}+I_1+I_2+I_3.
\end{align*}
    For the term $I_1$, an application of \eqref{dissipative condition}, Young's inequality and \eqref{cond:linear growth} yields
    \begin{align*}
        I_1&\leq p\int_s^t\be\left[\frac{\langle Y_r^{s,y,x}-0,f(r,x,Y_r^{s,y,x})-f(r,x,0)\rangle}{(1+|Y_r^{s,y,x}|^2)^{1-\frac{p}{2}}}\right]\d r+p\int_s^t\be\left[\frac{\langle Y_r^{s,y,x},f(r,x,0)\rangle}{(1+|Y_r^{s,y,x}|^2)^{1-\frac{p}{2}}}\right]\d r\\
        &\leq p\int_s^t\be\left[\frac{-\lambda|Y_r^{s,y,x}|^2}{(1+|Y_r^{s,y,x}|^2)^{1-\frac{p}{2}}}\right]\d r+p\int_s^t\be[|f(r,x,0)||Y_r^{s,y,x}|^{p-1}]\d r\\
        &\leq p\int_s^t\be\left[\frac{-\lambda|Y_r^{s,y,x}|^2}{(1+|Y_r^{s,y,x}|^2)^{1-\frac{p}{2}}}\right]\d r+p\varsigma \int_s^t\be[(1+|Y_r^{s,y,x}|^2)^\frac{p}{2}]\d r+C_p\varsigma^{1-p}\int_s^t|f(r,x,0)|^p\d r\\
        &\leq -p(\lambda-\varsigma)\int_s^t\be[(1+|Y_r^{s,y,x}|^2)^\frac{p}{2}]\d r+C_{\kappa,p}(\lambda+\varsigma^{1-p})(1+|x|^p)(t-s).
    \end{align*}
    To estimate $I_2$, we apply Young's inequality and \eqref{eq:bounded} to get
    \begin{align*}
        I_2&= p\int_s^t\int_{|z|>1}\int_0^1\be\left[\frac{\langle Y_r^{s,y,x}+ug(r,Y_r^{s,y,x})z,g(r,Y_r^{s,y,x})z\rangle}{(1+|Y_r^{s,y,x}+ug(r,Y_r^{s,y,x})z|^2)^{1-\frac{p}{2}}}\right]\d u\nu_2(\d z)\d r\\
        &\leq p\int_s^t\int_{|z|>1}\int_0^1\be[|Y_r^{s,y,x}+ug(r,Y_r^{s,y,x})z|^{p-1}|g(r,Y_r^{s,y,x})z|]\d u\nu_2(\d z)\d r\\
        &\leq p\int_s^t\int_{|z|>1}\be[|Y_r^{s,y,x}|^{p-1}|g(r,Y_r^{s,y,x})z|+|g(r,Y_r^{s,y,x})z|^p]\nu_2(\d z)\d r\\
        &\leq p\int_s^t\int_{|z|>1}\be[C_g^\alpha|Y_r^{s,y,x}|^p|z|+C_g^{\alpha(1-p)}C_p\Vert g(r,Y_r^{s,y,x})\Vert^p|z|+\Vert g(r,Y_r^{s,y,x})\Vert^p|z|^p]\nu_2(\d z)\d r\\
        &\leq pC_{\alpha,d_2}S_{d_2}(\alpha-1)^{-1}C_g^\alpha\int_s^t\be[(1+|Y_r^{s,y,x}|^2)^\frac{p}{2}]\d r+C_{\kappa,p}(t-s).
    \end{align*}
   By Lemma \ref{lem:H(y,z)} and \eqref{eq:bounded}, we have
    \begin{align*}
        I_3&\leq p\int_s^t\int_{|z|\leq 1}\int_0^1\int_0^1\be\left[\frac{\langle u_1g(r,Y_r^{s,y,x})z,g(r,Y_r^{s,y,x})z\rangle}{(1+|Y_r^{s,y,x}+u_2u_1g(r,Y_r^{s,y,x})z|^2)^{1-\frac{p}{2}}}\right]\d u_2\d u_1\nu_2(\d z)\d r \\
        &\leq p\int_s^t\int_{|z|\leq 1}\int_0^1 u_1\be[\Vert g(r,Y_r^{s,y,x})\Vert^2]|z|^2\d u_1\nu_2(\d z)\d r\\
        &\leq C_{\kappa,p}C_{\alpha,d_2}S_{d_2}(2-\alpha)^{-1}(t-s)
    \end{align*}
    Putting together the above estimates and taking $\varsigma=C_{\alpha,d_2}S_{d_2}(2-\alpha)^{-1}C_g^\alpha$, we derive
    \begin{align*}
        \be[V(Y_t^{s,y,x})]&\leq (1+|y|^2)^\frac{p}{2}-p(\lambda-q')\int_s^{t}\be[V(Y_r^{s,y,x})]\d r+C_{\kappa,p}(1+|x|^p)(t-s),
    \end{align*}
    where 
    \[q'=C_{\alpha,d_2}S_{d_2}((\alpha-1)^{-1}+(2-\alpha)^{-1})C_g^\alpha<q.\]
    Then we conclude that
    \begin{align*}
        \frac{\d}{\d t}\be[V(Y_t^{s,y,x})]\leq -p(\lambda-q)\be[V(Y_t^{s,y,x})]+C_{\kappa,p}(1+|x|^p).
    \end{align*}
    By the comparison theorem, it follows that
    \begin{equation*}
        \be[V(Y_t^{s,y,x})]\leq \re^{-p(\lambda-q)(t-s)}(1+|y|^2)^\frac{p}{2}+C_{\kappa,p}(1+|x|^p).
    \end{equation*}
Therefore, we get
\begin{align*}
    \be[|Y_t^{s,y,x}|^p]\leq C_{\kappa,p}(1+|x|^p+\re^{-p(\lambda-q)(t-s)}|y|^p).
\end{align*}
This concludes the proof.
\end{proof}

\begin{lemma}\label{lem:Y_t^{y_1,x_1}-Y_t^{y_2,x_2}}
    Suppose that Assumption \ref{cond} holds. For any $x_i\in\br^n$ and $y_i\in\br^m$, $i=1,2$, then there exists $C_{\kappa,p}>0$ such that for all $p\in[1,\alpha)$
    \begin{equation}
        \be[|Y_t^{s,y_1,x_1}-Y_t^{s,y_2,x_2}|^p]\leq \re^{-p(\lambda-q)(t-s)}|y_1-y_2|^p+C_{\kappa,p}|x_1-x_2|^p.
    \end{equation}
\end{lemma}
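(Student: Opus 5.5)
We set $Z_t:=Y_t^{s,y_1,x_1}-Y_t^{s,y_2,x_2}$, so that
\[
\d Z_t=\big(f(t,x_1,Y_t^{s,y_1,x_1})-f(t,x_2,Y_t^{s,y_2,x_2})\big)\d t+\hat g_t\,\d L_t^2,
\]
where $\hat g_t:=g(t,Y_t^{s,y_1,x_1})-g(t,Y_t^{s,y_2,x_2})$ and $Z_s=y_1-y_2$. We treat only $p\in(1,\alpha)$; the case $p=1$ is analogous. The argument repeats the proof of Lemma \ref{lem:Y(t)-hatY(t)} with $\ve=1$ and with the frozen slow variable perturbation $x_1-x_2$ in place of $X_r^\ve-X_{r_\Delta}^\ve$.

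\textbf{It\^o expansion.} For $\theta>0$ we apply It\^o's formula to $V(Z_t)=(\theta+|Z_t|^2)^{p/2}$, splitting the jump integral at $|z|=1/(2C_g)$. This produces three terms.

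\textbf{Drift term.} We decompose the drift difference as
\[
\big[f(t,x_1,Y^1)-f(t,x_1,Y^2)\big]+\big[f(t,x_1,Y^2)-f(t,x_2,Y^2)\big].
\]
The dissipativity \eqref{dissipative condition} gives $-p\lambda\,\be[V(Z)]$ from the first bracket, up to a $\theta$-term. The second bracket is bounded by $C|x_1-x_2|$ via \eqref{cond:Lip}. Young's inequality then yields $p\varsigma\,\be[V(Z)]+C_p\varsigma^{1-p}C^p|x_1-x_2|^p$.

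\textbf{Large jumps.} Since $\|\hat g_t\|\le C_g|Z_t|$, the large-jump term is estimated exactly as $I_2$ in Lemma \ref{lem:Y(t)-hatY(t)}. This gives
\[
C_{\alpha,d_2}S_{d_2}\big(2^{\alpha-1}(\alpha-1)^{-1}+2^{\alpha-p}(\alpha-p)^{-1}\big)C_g^\alpha\,p\,\be[V(Z)].
\]

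\textbf{Small jumps.} By Lemma \ref{lem:H(y,z)}, the small-jump term is at most
\[
2^{\alpha-p}C_{\alpha,d_2}S_{d_2}(2-\alpha)^{-1}C_g^\alpha\,p\,\be[V(Z)]+C\theta^{p/2}.
\]

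\textbf{Closing the estimate.} We choose $\varsigma$ as in Lemma \ref{lem:Y(t)-hatY(t)}, so that the total coefficient is $q'<q$, and let $\theta\to0$. This gives
\[
\frac{\d}{\d t}\be|Z_t|^p\le -p(\lambda-q)\,\be|Z_t|^p+C_{\kappa,p}|x_1-x_2|^p .
\]
The comparison theorem then yields
\[
\be|Z_t|^p\le \re^{-p(\lambda-q)(t-s)}|y_1-y_2|^p+\frac{C_{\kappa,p}}{p(\lambda-q)}|x_1-x_2|^p,
\]
and $\lambda>q$ makes the constant uniform in $t-s$.

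\textbf{Main obstacle.} The delicate point is making sure the jump contributions are absorbed by $\lambda$ with constants exactly below $q$, so that the exponential rate is uniform in time. A further issue is that the $\theta$-terms do not accumulate linearly in $t$ before the limit $\theta\to0$ is taken. To handle this, we would apply the comparison argument for fixed $\theta$, which gives an additive term $C\theta^{p/2}/(\lambda-q)$, and only then let $\theta\to0$. The moment finiteness needed to justify taking expectations comes from Lemma \ref{lem:sup E[Y_t]}.
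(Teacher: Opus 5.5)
Your proposal is correct and follows the paper's proof essentially step for step. The shared steps are: It\^o's formula for $(\theta+|Z_t|^2)^{p/2}$ with the jump integral split at $|z|=1/(2C_g)$; the drift split into a dissipative part and an $|x_1-x_2|$ part, the latter handled by Young's inequality; the same large- and small-jump bounds via $\Vert\hat g_t\Vert\le C_g|Z_t|$ and Lemma~\ref{lem:H(y,z)}; and then $\theta\to0$ followed by the comparison theorem. Your extra $\theta^{p/2}$ term in the small-jump bound is unnecessary but harmless.

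One caveat applies to your argument and the paper's alike. The absorption $q'<q$ implicitly requires $p$ to be at most the exponent fixed in (A4), because the large-jump constant $2^{\alpha-p}(\alpha-p)^{-1}$ blows up as $p\uparrow\alpha$. This does not matter for the case $p=1$, which is the one used later to construct the periodic measure.
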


\begin{proof}
    We consider only the case of $p\in(1,\alpha)$, as the case of $p=1$ is similar. For any fixed $\theta>0$, we define $V(y)=(\theta+|y|^2)^\frac{p}{2}$. For notational simplicity, we set $Z_t=Y_t^{s,y_1,x_1}-Y_t^{s,y_2,x_2}$, $\tilde{f}_t=f(t,x_1,Y_t^{s,y_1,x_1})-f(t,x_2,Y_t^{s,y_2,x_2})$ and $\hat{g}_t:=g(t,Y_t^{s,y_1,x_1})-g(t,Y_t^{s,y_2,x_2})$. Applying It\^o's formula then gives
    \begin{align*}
    \be[V(Z_t)]&=(\theta+|y_1-y_2|^2)^\frac{p}{2}+\int_s^t\be[\langle \nabla V(Z_r),\tilde{f}_r\rangle]\d r\\
    &\quad+\int_s^t\int_{|z|>\frac{1}{2C_g}}\be[V(Z_r+\hat{g}_rz)-V(Z_r)]\nu_2(\d z)\d r\notag\\
    &\quad+\int_s^t\int_{|z|\leq \frac{1}{2C_g}}\be[V(Z_r+\hat{g}_rz)-V(Z_r)-\langle \nabla V(Z_r),\hat{g}_rz\rangle]\nu_2(\d z)\d r\\
    &=:I_1+I_2+I_3.
    \end{align*}
    For the term $I_1$, we derive from \eqref{dissipative condition}, \eqref{cond:Lip} and Young's inequality that
\begin{align*}
    I_1&= p\int_s^t\be[(\theta+|Z_r|^2)^{\frac{p}{2}-1}(\langle Z_r,\hat{f}_r\rangle+\langle Z_r,\bar{f}_r\rangle)]\d r\\
    &\leq p\int_s^t\be[-\lambda|Z_r|^2(\theta+|Z_r|^2)^{\frac{p}{2}-1}+|\bar{f}_r||Z_r|^{p-1}]\d r\\
    &\leq -p(\lambda-\varsigma)\int_s^t\be[V(Z_r)]\d r+p\lambda\theta^\frac{p}{2}(t-s)+C_p\varsigma^{1-p}\int_s^t\be[|\bar{f}_r|^p]\d r\\
    &\leq -p(\lambda-\varsigma)\int_s^t\be[V(Z_r)]\d r+C_{\kappa,p}(\lambda\theta^\frac{p}{2}+\varsigma^{1-p}|x_1-x_2|^p)(t-s),
\end{align*}
where $\hat{f}_t=f(t,x_1,Y_t^{s,y_1,x_1})-f(t,x_1,Y_t^{s,y_2,x_2})$ and $\bar{f}_t=f(t,x_1,Y_t^{s,y_2,x_2})-f(t,x_2,Y_t^{s,y_2,x_2})$. For the term $I_2$, by \eqref{cond:Lip} again, we have
\begin{align*}
    I_2&=p\int_s^t\int_{|z|>\frac{1}{2C_g}}\int_0^1\be[(\theta+|Z_r+u\hat{g}_rz|^2)^{\frac{p}{2}-1}\langle Z_r+u\hat{g}_rz,\hat{g}_rz\rangle]\d u\nu_2(\d z)\d r\\
    &\leq p\int_s^t\int_{|z|>\frac{1}{2C_g}}\int_0^1\be[|Z_r+u\hat{g}_rz|^{p-1}|\hat{g}_rz|]\d u\nu_2(\d z)\d r\\
    &\leq p\int_s^t\int_{|z|>\frac{1}{2C_g}}\be[|Z_r|^{p-1}|\hat{g}_rz|+|\hat{g}_rz|^p]\nu_2(\d z)\d r\\
    &\leq p\int_s^t\int_{|z|>\frac{1}{2C_g}}\be[|Z_r|^{p}](C_g|z|+C_g^p|z|^p)\nu_2(\d z)\d r\\
    &\leq pC_{\alpha,d_2}S_{d_2}(2^{\alpha-1}(\alpha-1)^{-1}+2^{\alpha-p}(\alpha-p)^{-1})C_g^\alpha\int_s^t\be[V(Z_r)]\d r.
\end{align*}
For the last term, we apply Lemma \ref{lem:H(y,z)} and \eqref{cond:Lip} to deduce
\begin{align*}
    I_3&\leq\int_s^t\int_{|z|\leq\frac{1}{2C_g}}\int_0^1\int_0^1\be\left[\frac{p\langle u_1\hat{g}_rz,\hat{g}_rz\rangle}{(\theta+|Z_r+u_2u_1\hat{g}_rz|^2)^{1-\frac{p}{2}}}\right]\d u_2\d u_1\nu_2(\d z)\d r\\
    &\leq pC_g^2\int_s^t\int_{|z|\leq\frac{1}{2C_g}}\be[| Z_r|^2(\theta+1/4|Z_r|^2)^{\frac{p}{2}-1}|z|^2\b1_{\{|Z_r|>0\}}]\nu_2(\d z)\d r\\
    &\leq 2^{2-p}pC_g^2\int_s^t\int_{|z|\leq\frac{1}{2C_g}}\be[V(Z_r)]|z|^2\nu_2(\d z)\d r\\
    &\leq 2^{\alpha-p}pC_{\alpha,d_2}S_{d_2}(2-\alpha)^{-1}C_g^\alpha\int_s^t\be[V(Z_r)]\d r.
\end{align*}
Combining the above estimates and taking $\varsigma=C_{\alpha,d_2}S_{d_2}(2^{\alpha-1}-2^{\alpha-p})(\alpha-p)^{-1}C_g^\alpha$, we obtain
\begin{align*}
    \be[V(Z_t)]
    \leq-p(\lambda-q')\int_s^t\be[V(Z_r)]\d r+\theta^\frac{p}{2}p\lambda+C_p|x_1-x_2|^p(t-s).
\end{align*}
where
\[q'=C_{\alpha,d_2}S_{d_2}(2^{\alpha-1}(\alpha-1)^{-1}+2^{\alpha-1}(\alpha-p)^{-1}+2^{\alpha-p}(2-\alpha)^{-1})C_g^\alpha<q.\]
Letting $\theta$ go to zero, we derive
\begin{align*}
    \frac{\d}{\d t}\be[|Y_t^{s,y_1,x_1}-Y_t^{s,y_2,x_2}|^p]&\leq -p(\lambda-q)\be[|Y_t^{s,y_1,x_1}-Y_t^{s,y_2,x_2}|^p]+C_{\kappa,p}|x_1-x_2|^p.
\end{align*}
Then the comparision theorem yields
\begin{align*}
    \be[|Y_t^{s,y_1,x_1}-Y_t^{s,y_2,x_2}|^p]\leq \re^{-p(\lambda-q)(t-s)}|y_1-y_2|^p+C_{\kappa,p}|x_1-x_2|^p.
\end{align*}
The proof is finished.
\end{proof}

The following theorem not only establishes the existence, uniqueness, and exponential ergodicity of
a periodic measure to the time-inhomogeneous frozen equation \eqref{frozen SDE}, but also provides moment estimates for the fast variable regarding the periodic measure.

\begin{theorem}\label{thm:periodic measures}
    Suppose that Assumptions \ref{cond} and \ref{con:periodic} hold. Then, the SDE \eqref{frozen SDE} admits a unique continuous periodic measure $\rho_t^{x}$ with period $\tau_2$, which satisfies 
    \begin{equation}\label{eq:rho_t convergence}
        \cw_1(P^x(s-n\tau_2,y,t,\cdot),\rho_t^x)\leq C(1+|x|+|y|)\re^{-(\lambda-q)n\tau_2}.
    \end{equation}
    Furthermore, for all $p\in[1,\alpha)$, there exists $C_{\kappa,p}>0$ such that 
    \begin{equation}
        \int_{\br^m}|w|^p\rho_t^{x}(\d w)\leq C_{\kappa,p}(1+|x|^p).
    \end{equation}
\end{theorem}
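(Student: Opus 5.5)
The construction is a pullback limit: start the frozen equation further and further back in whole periods and show the laws at a fixed time converge. Fix $x$ and $t$. For $n\in\bn$ consider $\mu_n:=P^{*,x}(t-n\tau_2,t)\delta_y=P^x(t-n\tau_2,y,t,\cdot)$.

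\textbf{Step 1: the sequence $(\mu_n)_n$ is Cauchy in $\cw_1$.} By the Markov property,
\[
\mu_{n+1}=\int_{\br^m}P^x(t-n\tau_2,w,t,\cdot)\,\nu(\d w),\qquad \nu:=P^x(t-(n+1)\tau_2,y,t-n\tau_2,\cdot).
\]
Couple the two starting laws $\delta_y$ and $\nu$ synchronously, driving both solutions with the same $L^2$. Lemma \ref{lem:Y_t^{y_1,x_1}-Y_t^{y_2,x_2}} with $p=1$ and $x_1=x_2$ then gives
\[
\cw_1(\mu_n,\mu_{n+1})\le \re^{-(\lambda-q)n\tau_2}\int_{\br^m}|y-w|\,\nu(\d w).
\]
Lemma \ref{lem:sup E[Y_t]} bounds the right-hand side by $C(1+|x|+|y|)\re^{-(\lambda-q)n\tau_2}$. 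These bounds are summable, so $\mu_n\to\rho_t^x$ in $\cw_1$. Since $\cw_1$ on $\cm_1$ is complete, the limit is a genuine probability measure with finite first moment. Summing the geometric tail gives \eqref{eq:rho_t convergence}, with $C$ absorbing $(1-\re^{-(\lambda-q)\tau_2})^{-1}$; the same contraction shows the limit does not depend on $y$.

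\textbf{Step 2: periodicity and invariance.} Periodicity follows from Assumption \ref{con:periodic}. Because $f$ and $g$ are $\tau_2$-periodic in time and $L^2$ has stationary increments, $P^x(s+\tau_2,y,t+\tau_2,\cdot)=P^x(s,y,t,\cdot)$. Hence the sequence defining $\rho^x_{t+\tau_2}$ coincides with that defining $\rho^x_t$, so $\rho^x_{t+\tau_2}=\rho^x_t$.

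For invariance, take $s\le t$ and write $P^{*,x}(s,t)\mu_n^{(s)}=P^x(s-n\tau_2,y,t,\cdot)$. The right-hand side converges to $\rho_t^x$: the same argument as in Step 1 works with initial time $s-n\tau_2$, and the endpoint offset is handled by independence of $y$ together with the contraction. On the left, $P^{*,x}(s,t)$ is $\cw_1$-Lipschitz with constant $\le 1$ by Lemma \ref{lem:Y_t^{y_1,x_1}-Y_t^{y_2,x_2}} and the same coupling argument. So $P^{*,x}(s,t)\mu_n^{(s)}\to P^{*,x}(s,t)\rho_s^x$, which yields $P^{*,x}(s,t)\rho_s^x=\rho_t^x$.

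\textbf{Step 3: uniqueness, continuity and moments.} For uniqueness, let $\tilde\rho$ be another periodic measure in $\cm_1$. Then $\tilde\rho_t=P^{*,x}(t-n\tau_2,t)\tilde\rho_{t}$. By the contraction, $\cw_1(\tilde\rho_t,\rho_t^x)\le \re^{-(\lambda-q)n\tau_2}\cw_1(\tilde\rho_t,\rho_t^x)$, which forces $\cw_1(\tilde\rho_t,\rho_t^x)=0$.

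Continuity in $t$ follows from $\rho_{t+h}^x=P^{*,x}(t,t+h)\rho_t^x$ together with the short-time estimate $\be|Y^{t,w,x}_{t+h}-w|\to0$. That estimate is proved as in Lemma \ref{lem:X(t+h)-X(t)}, with $\rho_t^x$ having uniformly integrable first moments.

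For the moment bound, Lemma \ref{lem:sup E[Y_t]} gives $\int|w|^p\mu_n(\d w)\le C_{\kappa,p}(1+|x|^p+\re^{-p(\lambda-q)n\tau_2}|y|^p)$. The map $\mu\mapsto\int|w|^p\,\d\mu$ is lower semicontinuous under weak convergence, which $\cw_1$-convergence implies. Taking $\liminf$ as $n\to\infty$ therefore yields $\int|w|^p\rho_t^x(\d w)\le C_{\kappa,p}(1+|x|^p)$.

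I expect the main obstacle to be the careful justification of the invariance identity, specifically passing the limit through $P^{*,x}(s,t)$ and matching the initial-time grids $s-n\tau_2$ against $t-n\tau_2$. Independence of the limit from the starting point, as established in Step 1, should resolve it.
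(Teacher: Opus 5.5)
Your proposal is correct and follows essentially the same route as the paper. The construction is a pullback limit of $P^x(t-n\tau_2,y,t,\cdot)$, shown to be Cauchy in $\cw_1$ by the synchronous-coupling contraction, with a limit independent of $y$. Invariance comes from $P^{*,x}(s,t)P^x(s-n\tau_2,y,s,\cdot)=P^x(s-n\tau_2,y,t,\cdot)$, uniqueness from the contraction, and the moment bound from the pullback moment estimate; your lower-semicontinuity step there is more careful than the paper's interchange of limit and $p$-th moment.

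The only imprecision is that your Step 1 alone gives \eqref{eq:rho_t convergence} just for $s=t$. For $s\le t$, the general case follows, as in the paper, from $P^x(s-n\tau_2,y,t,\cdot)=P^{*,x}(t-n\tau_2,t)P^x(s,y,t,\cdot)$ (by periodicity) and one more application of the contraction, which your Step 2 offset argument already contains.
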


\begin{proof}
    We proceed by splitting the proof into the following four steps.
    \vskip5pt \noindent
    \textbf{Step 1.} For any $\rho,\bar{\rho}\in\cp(\br^m)$, we will show that 
    \begin{equation}\label{P*(s,s+ntau)rho leq rho}
        \cw_1(P^{*,x}(t-n\tau_2,t)\rho,P^{*,x}(t-n\tau_2,t)\bar{\rho})\leq \re^{-(\lambda-q)n\tau_2}\cw_1(\rho,\bar{\rho}).
    \end{equation}
    Let $\eta_1$ and $\eta_2$ be two absolute-integrable and $\cf_s$-measurable random variables such that
    \[\cw_1(\rho,\bar{\rho})=\be[|\eta_1-\eta_2|].\]
    Denote by $Y_t^{s,\eta_1,x}$ and $Y_t^{s,\eta_2,x}$ the solutions to \eqref{frozen SDE} with initial values $\eta_1$ and $\eta_2$, respectively. Then, Lemma \ref{lem:Y_t^{y_1,x_1}-Y_t^{y_2,x_2}} yields
    \begin{align*}
    \cw_1(P^{*,x}(t-n\tau_2,t)\rho,P^{*,x}(t-n\tau_2,t)\bar{\rho})&\leq \be[|Y_{t}^{t-n\tau_2,\eta_1,x}-Y_{t}^{t-n\tau_2,\eta_2,x}|]\\
    &\leq\re^{-(\lambda-q)n\tau_2}\be[|\eta_1-\eta_2|]\\
    &=\re^{-(\lambda-q)n\tau_2}\cw_1(\rho,\bar{\rho}).
\end{align*}
\textbf{Step 2.} Our goal is to establish the existence and uniqueness of the periodic measure. 
For all $k\geq n$, it is not difficult to see that
\begin{align*}
    \cw_1(P^x(t-n\tau_2,y,t,\cdot),P^x(t-k\tau_2,y,t,\cdot))&=\cw_1(P^{*,x}(t-n\tau_2,t)\delta_y,P^{*,x}(t-k\tau_2,t)\delta_y)\\
    &\leq \re^{-(\lambda-q)n\tau_2}\cw_1(\delta_y,P^{*,x}(t-k\tau_2,t-n\tau_2)\delta_y)\\
    &\leq \re^{-(\lambda-q)n\tau_2}\be[|Y_{t-n\tau_2}^{t-k\tau_2,y,x}-y|]\\
    &\leq C_\kappa(1+|x|+|y|)\re^{-(\lambda-q)n\tau_2}.
\end{align*}
Then we obtain
\[\lim_{n\to\infty}\cw_1(P^x(t-n\tau_2,y,t,\cdot),P^x(t-k\tau_2,y,t,\cdot))=0,\]
which implies that $\{P^x(t-n\tau_2,\eta,t,\cdot)\}_{n}$ is a Cauchy sequence and hence converges to a limit, denoted by
\[\rho_t^{y,x}:=\lim_{n\to\infty}P^x(t-n\tau_2,y,t,\cdot).\]
For all $y,\bar{y}\in\br^m$, we observe that
\begin{align*}
    \cw_1(\rho_t^{y,x},\rho_t^{\bar{y},x})&\leq\cw_1(\rho_t^{y,x},P^x(t-n\tau_2,y,t,\cdot))+\cw_1(P^x(t-n\tau_2,\bar{y},t,\cdot),\rho_t^{\bar{y},x})\\
    &\quad+\cw_1(P^x(t-n\tau_2,y,t,\cdot),P^x(t-n\tau_2,\bar{y},t,\cdot))
\end{align*}
and
\begin{align*}
    \cw_1(P^x(t-n\tau_2,y,t,\cdot),P^x(t-n\tau_2,\bar{y},t,\cdot))&\leq \cw_1(P^{*,x}(t-n\tau_2,t,\cdot)\delta_y,P^{*,x}(t-n\tau_2,t,\cdot)\delta_{\bar{y}})\\
    &\leq \re^{-(\lambda-q)n\tau_2}\be[|y-\bar{y}|].
\end{align*}
Therefore, $\cw_1(\rho_t^{y,x},\rho_t^{\bar{y},x})\leq 0$ as $n\to+\infty$, showing that the limit $\rho_t^{y,x}$ does not depend on $y$. It is denoted as $\rho_t^x$. Note that for all $t\geq s$, we have
\begin{align*}
    &~\cw_1(P^x(s-n\tau_2,y,t,\cdot),P^x(t-n\tau_2,y,t,\cdot))\\
    =&~\cw_1(P^{*,x}(t-n\tau_2,t)P^{*,x}(s-n\tau_2,t-n\tau_2)\delta_y,P^{*,x}(t-n\tau_2,t)\delta_y)\\
    \leq&~\re^{-(\lambda-q)n\tau_2}\cw_1(P^{*,x}(s,t)\delta_y,\delta_y)\\
    =&~\re^{-(\lambda-q)n\tau_2}\be[|Y_t^{s,y,x}-y|]\\
    \leq&~C_\kappa(1+|x|+|y|)\re^{-(\lambda-q)n\tau_2}.
\end{align*}
For all $n\geq k$, we get
\begin{align*}
    \cw_1(P^x(s-n\tau_2,y,t,\cdot),\rho_t^x)&\leq \cw_1(P^x(s-n\tau_2,y,t,\cdot),P^x(t-n\tau_2,y,t,\cdot))+\cw_1(P^x(t-k\tau_2,y,t,\cdot),\rho_t^x)\\
    &\quad+\cw_1(P^x(t-n\tau_2,y,t,\cdot),P^x(t-k\tau_2,y,t,\cdot))\\
    &\leq C(1+|x|+|y|)(\re^{-(\lambda-q)n\tau_2}+\re^{-(\lambda-q)k\tau_2})+\cw_1(P^x(t-k\tau_2,y,t,\cdot),\rho_t^x),
\end{align*}
then \eqref{eq:rho_t convergence} holds as $k\to+\infty$. 

We claim that $\rho_t^{x}$ is a periodic measure. To establish this, observe that for all $t\geq s$, we have
\begin{align*}
    \cw_1(\rho_{t+\tau_2}^x,\rho_t^x)&\leq\cw_1(\rho_{t+\tau_2}^x,P^x(t-(n+1)\tau_2,y,t-n\tau_2,\cdot))+\cw_1(P^x(t-n\tau_2,y,t,\cdot),\rho_t^x)\\
    &\quad+\cw_1(P^x(t-(n+1)\tau_2,y,t-n\tau_2,\cdot),P^x(t-n\tau_2,y,t,\cdot)),
\end{align*}
and
\begin{align*}
    \cw_1(P^{*,x}(s,t)\rho_s^{x},\rho_t^x)&\leq\cw_1(P^{*,x}(s,t)\rho_s^x,P^{*,x}(s,t)P^x(s-n\tau_2,y,s,\cdot))+\cw_1(P^x(s-n\tau_2,y,t,\cdot),\rho_t^x)\\
    &\quad+\cw_1(P^{*,x}(s,t)P^x(s-n\tau_2,y,s,\cdot),P^{x}(s-n\tau_2,y,t,\cdot)).
\end{align*}
Note that
\[P^{*,x}(s,t)P^x(s-n\tau_2,y,s,\cdot)=\int_{\br^m}P^x(s,w,t,\cdot)P^x(s-n\tau_2,y,s,\d w)=P^x(s-n\tau_2,y,t,\cdot),\]
then taking $n\to+\infty$, we conclude that
\[\cw_1(\rho_{s+\tau_2}^x,\rho_s^x)=0~~\mbox{and}~~\cw_1(P^{*,x}(s,t)\rho_s^{x},\rho_t^x)=0.\]
It remains to prove that $\rho_s^{x}$ is the unique periodic measure. Suppose that $\rho_s^{x}$ and $\bar{\rho}_s^{x}$ are two periodic measures, then we have
\begin{align*}
    \cw_1(\rho_s^{x},\bar{\rho}_s^{x})&=\cw_1(\rho_{s+\tau_2}^{x},\bar{\rho}_{s+\tau_2}^{x})\\
    &=\cw_1(P^{*,x}(s,s+\tau_2)\rho_{s}^{x},P^{*,x}(s,s+\tau_2)\bar{\rho}_{s}^{x})\\
    &\leq \re^{-(\lambda-q)\tau_2}\cw_1(\rho_{s}^{x},\bar{\rho}_{s}^{x}),
\end{align*}
which implies that $\rho_{s}^{x}=\bar{\rho}_{s}^{x}$.
\vskip5pt
\noindent\textbf{Step 3.} We will show that $\rho_s^x$ is continuous in $\cw_1$. For any $t\geq r\geq s$, note that
\begin{align*}
    Y_t^{s,y,x}-Y_r^{s,y,x}=\int_r^tf(u,x,Y_u^{s,y,x})\d u+\int_r^tg(u,x,Y_u^{s,y,x})\d L_u^2.
\end{align*}
Following the argument of Lemma \ref{lem:X(t+h)-X(t)}, we derive that
\begin{align}\label{eq:Yt-Yr}
    \be[|Y_t^{s,y,x}-Y_r^{s,y,x}|]\leq C_{\kappa}(1+|x|+|y|)(|t-r|+|t-r|^\frac{1}{\alpha}).
\end{align}
For any $t>r>s$, we have
\begin{equation}\label{eq:W(rhos,rhot)}
    \begin{aligned}
        \cw_1(\rho^x_r,\rho^x_t)&\leq\cw_1(\rho^x_r,P^x(s-n\tau_2,y,r,\cdot))+\cw_1(P^x(s-n\tau_2,y,t,\cdot),\rho^x_t)\\
        &~~~+\cw_1(P^x(s-n\tau_2,y,r,\cdot),P^x(s-n\tau_2,y,t,\cdot)).
    \end{aligned}
\end{equation}
It follows from the definition of the 1-Wasserstein distance that
    \begin{align*}
        \cw_1(P^x(s-n\tau_2,y,r,\cdot),P^x(s-n\tau_2,y,t,\cdot))&=\sup_{\vp\in Lip_1}|\be[\vp(Y_{r}^{s-n\tau_2,y,x})]-\be[\vp(Y_{t}^{s-n\tau_2,y,x})]|\\
        &\leq\be[|Y_r^{s-n\tau_2,y,x}-Y_t^{s-n\tau_2,y,x}|].
    \end{align*}
Together with \eqref{eq:Yt-Yr} and \eqref{eq:W(rhos,rhot)}, we first take the limit as $n\to+\infty$, and then derive that $\rho^x_r\to \rho^x_t$ in $\cw_1$ as $r\to t$.

\vskip5pt
\noindent\textbf{Step 4.} We will show that $\rho_s^x$ is in $\cm_p$. By Lemma \ref{lem:sup E[Y_t]}, we deduce that
\begin{align*}
    \int_{\br^m}|w|^p\rho_t^{x}(\d w)&=\lim_{n\to+\infty}\int_{\br^m}|w|^pP^x(s-n\tau_2,y,t,\d w)\\
    &\leq \limsup_{n\to+\infty}\be[|Y_{t}^{s-n\tau_2,y,x}|^p]\\
    &\leq C_{\kappa,p}\limsup_{n\to+\infty} (1+|x|^p+\re^{-p(\lambda-q)(t-s+n\tau_2)}|y|^p)\\
    &= C_{\kappa,p}(1+|x|^p).
\end{align*}
This completes the proof.
\end{proof}

We conclude this subsection by analysing the exponential convergence of the average coefficient, which is essential for proving the averaging principle.
\begin{lemma}\label{lem:bar E[b]-bar b}
    Suppose that Assumption \ref{cond} holds. For any $x\in\br^n$, $y\in\br^m$ and $s,t_1,t_2\in\br$, then there exists $C_\kappa>0$ such that
    \begin{align*}
            |\be[b(t_1,x,Y_{t_2}^{s,y,x})]-\bar{b}(t_1,t_2,x)|\leq C_\kappa(1+|x|+|y|)\re^{-(\lambda-q)(t_2-s)},
    \end{align*}
    where 
    \[\bar{b}(t_1,t_2,x)=\int_{\br^m}b(t_1,x,w)\rho_{t_2}^{x}(\d w).\]
\end{lemma}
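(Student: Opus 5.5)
The plan is to write the left-hand side as the difference of the integrals of one Lipschitz function against two measures, and then bound it by the $\cw_1$-distance between them. Fix $x$ and $t_1$, and set $h(w):=b(t_1,x,w)$. By \eqref{cond:Lip}, $h$ is Lipschitz with constant $C$, so $h/C\in Lip_1$ (componentwise if needed, at the cost of a dimensional constant). Then
\begin{align*}
|\be[b(t_1,x,Y_{t_2}^{s,y,x})]-\bar{b}(t_1,t_2,x)|=\left|\int_{\br^m}h(w)P^x(s,y,t_2,\d w)-\int_{\br^m}h(w)\rho_{t_2}^x(\d w)\right|\leq C_\kappa\,\cw_1(P^x(s,y,t_2,\cdot),\rho_{t_2}^x).
\end{align*}
Both integrals are finite by Lemma \ref{lem:sup E[Y_t]} and Theorem \ref{thm:periodic measures}, together with the linear growth \eqref{cond:linear growth}.

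It therefore remains to show $\cw_1(P^x(s,y,t_2,\cdot),\rho_{t_2}^x)\leq C_\kappa(1+|x|+|y|)\re^{-(\lambda-q)(t_2-s)}$. I would not use \eqref{eq:rho_t convergence} directly, because it only gives decay along the discrete sequence $n\tau_2$. Instead I would use the invariance property $\rho_{t_2}^x=P^{*,x}(s,t_2)\rho_s^x$ from Theorem \ref{thm:periodic measures}, together with the contraction of Step 1 of that proof, now taken over an arbitrary time length $t_2-s$. Lemma \ref{lem:Y_t^{y_1,x_1}-Y_t^{y_2,x_2}} with $x_1=x_2$ and $p=1$ gives $\be|Y_{t_2}^{s,\eta_1,x}-Y_{t_2}^{s,\eta_2,x}|\leq \re^{-(\lambda-q)(t_2-s)}\be|\eta_1-\eta_2|$. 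Coupling $\delta_y$ with $\rho_s^x$, this yields
\begin{align*}
\cw_1(P^{*,x}(s,t_2)\delta_y,P^{*,x}(s,t_2)\rho_s^x)\leq \re^{-(\lambda-q)(t_2-s)}\cw_1(\delta_y,\rho_s^x)\leq \re^{-(\lambda-q)(t_2-s)}\Big(|y|+\int_{\br^m}|w|\rho_s^x(\d w)\Big).
\end{align*}
By the moment bound in Theorem \ref{thm:periodic measures} with $p=1$, the last factor is at most $C_\kappa(1+|x|+|y|)$, which closes the estimate.

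The only delicate point is justifying the contraction for random initial data, since Lemma \ref{lem:Y_t^{y_1,x_1}-Y_t^{y_2,x_2}} is stated for deterministic initial points. I would handle this by conditioning on $\cf_s$: take $(\eta_1,\eta_2)$ independent of the noise increments after $s$ and distributed as an optimal coupling of $(\delta_y,\rho_s^x)$, which trivially exists because one marginal is a point mass. This is exactly what Step 1 of the proof of Theorem \ref{thm:periodic measures} already does, so the argument carries over with $n\tau_2$ replaced by $t_2-s$.

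If $t_2<s$, the statement should be read with $t_2\geq s$, which is the regime used later. Otherwise the bound is trivial after enlarging the constant, using the linear growth of $b$ and the moment estimates.
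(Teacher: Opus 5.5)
Your proof is correct and takes essentially the same route as the paper. Both use the invariance $\rho_{t_2}^x=P^{*,x}(s,t_2)\rho_s^x$, the contraction of Lemma \ref{lem:Y_t^{y_1,x_1}-Y_t^{y_2,x_2}} with $x_1=x_2$ and $p=1$, and the first-moment bound on $\rho_s^x$ from Theorem \ref{thm:periodic measures}. The only difference is packaging: the paper skips $\cw_1$ and the random-initial-data coupling by writing $\bar{b}(t_1,t_2,x)=\int_{\br^m}\be[b(t_1,x,Y_{t_2}^{s,v,x})]\rho_s^x(\d v)$ and applying the deterministic-point estimate inside the $v$-integral, which is exactly the conditioning you propose for your \emph{delicate point}.
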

\begin{proof}
    By the definition of periodic measures and Lemma \ref{lem:Y_t^{y_1,x_1}-Y_t^{y_2,x_2}}, we derive that
    \begin{align*}
        |\be[b(t_1,x,Y_{t_2}^{s,y,x})]-\bar{b}(t_1,t_2,x)|&=\left|\be[b(t_1,x,Y_{t_2}^{s,y,x})]-\int_{\br^m}b(t_1,x,w)\rho_{t_2}^{x}(\d w)\right|\\
        &=\left|\be[b(t_1,x,Y_{t_2}^{s,y,x})]-\int_{\br^m}b(t_1,x,w)P^{*,x}(s,t_2)\rho_s^{x}(\d w)\right|\\
        &=\left|\be[b(t_1,x,Y_{t_2}^{s,y,x})]-\int_{\br^m}\int_{\br^m}b(t_1,x,w)P^x(s,v,t_2,\d w)\rho_s^{x}(\d v)\right|\\
        &=\left|\be[b(t_1,x,Y_{t_2}^{s,y,x})]-\int_{\br^m}\be[b(t_1,x,Y_{t_2}^{s,v,x})]\rho_s^{x}(\d v)\right|\\
        &=\left|\int_{\br^m}\be[b(t_1,x,Y_{t_2}^{s,y,x})]-\be[b(t_1,x,Y_{t_2}^{s,v,x})]\rho_s^{x}(\d v)\right|\\
        &\leq C_\kappa\re^{-(\lambda-q)(t_2-s)}\int_{\br^m}\be[|y-v|]\rho_s^{x}(\d v)\\
        &\leq C_\kappa(1+|x|+|y|)\re^{-(\lambda-q)(t_2-s)}.
    \end{align*}
    This completes the proof.
\end{proof}

\subsection{The averaged equation of the first approximation}\label{subsec:The averaged equation}
In this subsection, we formulate the averaged equation with $\ve$ and provide a proof of the first main theorem. Specifically, recall that the averaged equation \eqref{thmeq:avraged equation1} takes the form
\begin{equation*}
\begin{cases}
\d \bar{X}_t^\ve=\bar{b}(t,t/\ve,\bar{X}_t^\ve)\d t+\sigma(t,\bar{X}_t^\ve) \d L_t^1,\\
\bar{X}_s^\ve=x,
\end{cases}
\end{equation*}
where
\[\bar{b}(t_1,t_2,x)=\int_{\br^m}b(t_1,x,y)\rho_{t_2}^{x}(\d y),\]
and $\rho_t^{x}$ denotes the unique periodic measure corresponding to the time-inhomogeneous frozen equation \eqref{frozen SDE}.

\begin{lemma}\label{lem:averaged eq has Lip and linear growth}
    Suppose that Assumption \ref{cond} holds. Then for all $x_i\in\br^n$ and $t_i\in\br$, $i=1,2$, there exists $C_\kappa>0$ such that
    \begin{align}\label{eq:averaged eq has Lip and linear growth}
        |\bar{b}(t_1,t_2,x_1)-\bar{b}(t_1,t_2,x_2)|\leq C_\kappa|x_1-x_2|,~~|\bar{b}(t_1,t_2,,x_1)|\leq C_\kappa(1+|x_1|).
    \end{align}
\end{lemma}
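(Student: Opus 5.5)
The plan is to prove both bounds by approximating the periodic measure with the law of the frozen process started far in the past, using the convergence estimate \eqref{eq:rho_t convergence} of Theorem \ref{thm:periodic measures}. Throughout, $t_1,t_2$ are fixed, and $s$ and $y$ are arbitrary (say $y=0$).

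For the Lipschitz estimate, I would first split
\[
\bar{b}(t_1,t_2,x_1)-\bar{b}(t_1,t_2,x_2)=\int_{\br^m}\big(b(t_1,x_1,w)-b(t_1,x_2,w)\big)\rho_{t_2}^{x_1}(\d w)+\Big(\int_{\br^m}b(t_1,x_2,w)\rho_{t_2}^{x_1}(\d w)-\int_{\br^m}b(t_1,x_2,w)\rho_{t_2}^{x_2}(\d w)\Big).
\]
By \eqref{cond:Lip}, the first term is bounded by $C|x_1-x_2|$. In the second term the integrand $w\mapsto b(t_1,x_2,w)$ is $C$-Lipschitz. By Kantorovich--Rubinstein duality the second term is therefore at most $C\,\cw_1(\rho_{t_2}^{x_1},\rho_{t_2}^{x_2})$. (Duality is applied componentwise, or directly through a coupling; this only costs a dimensional constant.) The key step is to show
\[
\cw_1(\rho_{t_2}^{x_1},\rho_{t_2}^{x_2})\leq C_\kappa|x_1-x_2|.
\]
To get this, I would insert $P^{x_i}(t_2-n\tau_2,y,t_2,\cdot)$ and use the triangle inequality:
\[
\cw_1(\rho_{t_2}^{x_1},\rho_{t_2}^{x_2})\leq \sum_{i=1,2}\cw_1\big(\rho_{t_2}^{x_i},P^{x_i}(t_2-n\tau_2,y,t_2,\cdot)\big)+\be\big[|Y_{t_2}^{t_2-n\tau_2,y,x_1}-Y_{t_2}^{t_2-n\tau_2,y,x_2}|\big].
\]
The last term comes from the synchronous coupling of the two frozen equations, which share the same noise $L^2$ and the same initial point $y$. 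Lemma \ref{lem:Y_t^{y_1,x_1}-Y_t^{y_2,x_2}} with $p=1$ and $y_1=y_2=y$ bounds it by $C_\kappa|x_1-x_2|$, uniformly in $n$. By \eqref{eq:rho_t convergence} (with $s=t_2$), the first two terms are at most $C(1+|x_i|+|y|)\re^{-(\lambda-q)n\tau_2}$, which vanishes as $n\to\infty$. Letting $n\to\infty$ gives the key bound and hence the Lipschitz estimate.

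For linear growth, \eqref{cond:linear growth} gives
\[
|\bar{b}(t_1,t_2,x_1)|\leq C\int_{\br^m}(1+|x_1|+|w|)\rho_{t_2}^{x_1}(\d w).
\]
The first-moment bound $\int_{\br^m}|w|\rho_{t_2}^{x_1}(\d w)\leq C_\kappa(1+|x_1|)$ from Theorem \ref{thm:periodic measures} (case $p=1$) then finishes it.

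The main obstacle is the uniformity of the $\cw_1$ comparison of periodic measures with respect to the slow variable. The estimate of Lemma \ref{lem:Y_t^{y_1,x_1}-Y_t^{y_2,x_2}} must hold on arbitrarily long time intervals with a constant independent of the interval length. It does, because the dissipativity margin $\lambda-q>0$ yields a bound of the form $\re^{-(\lambda-q)(t-s)}|y_1-y_2|+C_\kappa|x_1-x_2|$ rather than one growing in $t-s$. Given that uniformity, the passage to the limit $n\to\infty$ is routine.
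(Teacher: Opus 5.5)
Your proof is correct and follows essentially the paper's argument. The paper inserts $\be[b(t_1,x_i,Y_{t_2}^{s,y,x_i})]$ and bounds the two outer terms by Lemma \ref{lem:bar E[b]-bar b}, the middle term by \eqref{cond:Lip} and Lemma \ref{lem:Y_t^{y_1,x_1}-Y_t^{y_2,x_2}}, then lets $s\to-\infty$; you feed the same two ingredients (far-past approximation of $\rho_{t_2}^{x}$ and synchronous coupling) through the intermediate bound $\cw_1(\rho_{t_2}^{x_1},\rho_{t_2}^{x_2})\leq C_\kappa|x_1-x_2|$ via \eqref{eq:rho_t convergence}, and your linear-growth step is the same as the paper's.
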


\begin{proof}
    It follows from Lemmas \ref{lem:bar E[b]-bar b} and \ref{lem:Y_t^{y_1,x_1}-Y_t^{y_2,x_2}}, and \eqref{cond:Lip} that
    \begin{align*}
        |\bar{b}(t_1,t_2,x_1)-\bar{b}(t_1,t_2,x_2)|&\leq |\bar{b}(t_1,t_2,x_1)-\be[b(t_1,x_1,Y_{t_2}^{s,y,x_1})]|\\
        &\quad+|\be[b(t_1,x_1,Y_{t_2}^{s,y,x_1})]-\be[b(t_1,x_2,Y_{t_2}^{s,y,x_2})]|\\
        &\quad+|\be[b(t_1,x_2,Y_{t_2}^{s,y,x_2})]-\bar{b}(t_1,t_2,x_2)|\\
        &\leq C_\kappa(1+|x_1|+|x_2|+|y|)\re^{-(\lambda-q)(t_2-s)}+C_\kappa|x_1-x_2|.
    \end{align*}
    Letting $s$ tend to $-\infty$, we derive that
    \begin{equation*}
    |\bar{b}(t_1,t_2,x_1)-\bar{b}(t_1,t_2,x_2)|\leq C_\kappa|x_1-x_2|.
\end{equation*}
Moreover, we have
\begin{align*}
    |\bar{b}(t_1,t_2,x_1)|&=\left|\int_{\br^m}b(t_1,x_1,y)\rho_{t_2}^{x_1}(\d y)\right|\leq C\int_{\br^m}(1+|x_1|+|y|)\rho_{t_2}^{x_1}(\d y)\leq C_\kappa(1+|x_1|).
\end{align*}
The proof is finished.
\end{proof}

\begin{lemma}\label{lem:bar{X}_t^ve}
    Suppose that Assumption \ref{cond} holds. Then for the initial condition $\bar{X}_s^\ve=x$, there exists a unique solution $\{\bar{X}_t^\ve, t\geq s\}$ to the system \eqref{thmeq:avraged equation1}. Moreover, there exists $C_{\kappa,p,s,T}>0$ such that for all $p\in[1,\alpha)$,
    \begin{align}\label{eq: E[|bar{X}_t|^p]}
        \sup_{\ve\in(0,1)}\sup_{t\in[s,T]}\be[\bar{X}_t^\ve|^p]\leq C_{\kappa,p,s,T}(1+|x|^p)
    \end{align}
    and for all $\Delta\in(0,1]$,
    \begin{align}\label{eq: E[|bar{X}_t^ve-bar{X}_{s+t(Delta)}|^p]}
        \sup_{\ve\in(0,1)}\sup_{t\in[s,T]}\be[|\bar{X}_t^\ve-\bar{X}_{t_\Delta}^\ve|^p]\leq C_{\kappa,p,s,T}(1+|x|^p)(\Delta^p+\Delta^\frac{p}{\alpha}).
    \end{align}
\end{lemma}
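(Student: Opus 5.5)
Well-posedness of \eqref{thmeq:avraged equation1} for each fixed $\ve\in(0,1)$ comes first. By Lemma \ref{lem:averaged eq has Lip and linear growth}, the drift $\bar{B}^\ve(t,x)=\bar{b}(t,t/\ve,x)$ is globally Lipschitz in $x$ uniformly in $t$ and has linear growth, with constants depending only on $\kappa$ (not on $\ve$). For measurability in $t$, Theorem \ref{thm:periodic measures} (Step 3) gives continuity of $t_2\mapsto\rho_{t_2}^x$ in $\cw_1$. Combined with \eqref{cond:Lip}, which makes $b(t_1,x,\cdot)$ Lipschitz and continuous in $t_1$, this shows that $\bar{b}$ is jointly continuous, so $\bar{B}^\ve$ is Borel. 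The diffusion $\sigma$ is Lipschitz by \eqref{cond:Lip} and bounded by \eqref{eq:bounded}. Hence the standard existence and uniqueness theorem for SDEs driven by L\'evy noise with Lipschitz coefficients (\cite[Theorem 6.2.9]{A09}, as for \eqref{SDEs}) yields a unique strong solution $\{\bar X_t^\ve,t\ge s\}$.

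For the moment bound \eqref{eq: E[|bar{X}_t|^p]} I will repeat the proof of Lemma \ref{lem:sup E[|X|] and sup E[|Y|]} for the slow component. It\^o's formula is applied to $(1+|\bar X_t^\ve|^2)^{p/2}$ and the result is split into the drift term $I_1$, the large-jump term $I_2$ and the small-jump term $I_3$.
\begin{itemize}
\item[(i)] In $I_1$, Young's inequality and the linear growth $|\bar b(t,t/\ve,x)|\le C_\kappa(1+|x|)$ give $C_{\kappa,p}\int_s^t\be[(1+|\bar X_r^\ve|^2)^{p/2}]\d r+C_{\kappa,p}(t-s)$. There is no $Y$ term this time.
\item[(ii)] $I_2$ is estimated exactly as before, using $\int_{|z|>1}(|z|+|z|^p)\nu_1(\d z)<\infty$.
\item[(iii)] $I_3$ is estimated via Lemma \ref{lem:H(y,z)} and \eqref{eq:bounded}.
\end{itemize}
Gronwall's inequality then closes the bound. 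Every constant depends only on $\kappa,p,s,T$, so the estimate is uniform in $\ve$.

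For the increment estimate \eqref{eq: E[|bar{X}_t^ve-bar{X}_{s+t(Delta)}|^p]} I follow Lemma \ref{lem:X(t+h)-X(t)} with $t_\Delta\le t\le t_\Delta+\Delta$. Write
\[\bar X_t^\ve-\bar X_{t_\Delta}^\ve=\int_{t_\Delta}^t\bar b(r,r/\ve,\bar X_r^\ve)\d r+\int_{t_\Delta}^t\sigma(r,\bar X_r^\ve)\d L^1_r.\]
For the drift, H\"older's inequality, linear growth and the moment bound give $C(1+|x|^p)\Delta^p$. For the stochastic integral, split the jumps at level $\Delta^{1/\alpha}$. On small jumps use BDG, on large jumps use Kunita's inequality, and use the boundedness of $\sigma$; by the scaling of $\nu_1$ both pieces give $C_{\kappa,p}\Delta^{p/\alpha}$.

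The only point needing care is the measurability and continuity of $\bar B^\ve$ in $t$, since the fast argument $t/\ve$ oscillates rapidly. This is not an obstacle, because only Borel measurability and bounds uniform in $t$ are needed, and the Lipschitz and growth constants in Lemma \ref{lem:averaged eq has Lip and linear growth} are independent of $t_1,t_2$. That independence is exactly what makes all the estimates uniform in $\ve$.
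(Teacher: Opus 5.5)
Your proposal is correct and takes the same route as the paper. The paper's proof invokes \cite[Theorem 6.2.9]{A09}, using the Lipschitz and linear-growth bounds for $\bar b$, which are uniform in $(t_1,t_2)$. It then repeats the It\^o-formula moment argument and the BDG/Kunita increment argument already used for the slow component $X_t^\ve$. Your extra check that $\bar b$ is jointly continuous, so that $\bar B^\ve(t,x)=\bar b(t,t/\ve,x)$ is Borel, fills in a detail the paper leaves implicit. Note that this check relies on the periodic-measure theorem, and therefore tacitly on Assumption~\ref{con:periodic} as well as Assumption~\ref{cond}.
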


\begin{proof}
    Adapting the argument of \cite[Theorem 6.2.9]{A09}, we show that \eqref{thmeq:avraged equation1} has a unique solution. Furthermore, \eqref{eq: E[|bar{X}_t|^p]} and \eqref{eq: E[|bar{X}_t^ve-bar{X}_{s+t(Delta)}|^p]} follow arguments analogous to those for Lemmas \ref{lem:sup E[|X|] and sup E[|Y|]} and \ref{lem:X(t+h)-X(t)}, respectively.
\end{proof}

By combining the estimates obtained above,  we now give the proof of the first main result.

\begin{proof}[Proof of Theorem \ref{thm:X_t^ve-bar{X}_t^ve}]
    The proof of this theorem is divided into three steps as follows.\vskip5pt
    \noindent\textbf{Step 1.} For any $p\in(1,\alpha)$ and any fixed $\theta\in(0,1)$, we define $V(x)=(\theta+|x|^2)^\frac{p}{2}$. A simple calculation then yields
    \begin{align}\label{eq:|nabla V|}
        |\nabla V(x)|\leq p|x|^{p-1}.
    \end{align}
    According to \cite[Theorem 2.3]{LSWX25}, $\nabla V(x)$ is $(p-1)$-H\"oder continuous, i.e., for any $x_1,x_2\in\br^n$,
    \begin{equation}\label{eq: nabla V(x_1)-nabla V(x_2)}
        |\nabla V(x_2)-\nabla V(x_1)|\leq C_p|x_2-x_1|^{p-1}
    \end{equation}
    holds for some $C_p>0$. Note that
    \begin{align}
        \d(X_t^\ve-\bar{X}_t^\ve)=(b(t,X_t^\ve,Y_t^\ve)-\bar{b}(t,t/\ve,\bar{X}_t^\ve))\d t+(\sigma(t,X_t^\ve)-\sigma(t,\bar{X}_t^\ve))\d L_t^1.
    \end{align}
    Let $Z_t^\ve:=X_t^\ve-\bar{X}_t^\ve$, it is a consequence of It\^o's formula that
    \begin{align*}
        \be[V(Z_t^\ve)]&=\theta^\frac{p}{2}+\int_s^t\be[\langle\nabla V(Z_u^\ve),b(u,X_u^\ve,Y_u^\ve)-\bar{b}(u,u/\ve,\bar{X}_u^\ve)\rangle]\d u\\
        &\quad+\int_s^t\int_{|z|> \frac{1}{2C_\sigma}}\be[V(Z_u^\ve+\hat{\sigma}_uz)-V(Z_u^\ve)]\nu_1(\d z)\d u\\
        &\quad+\int_s^t\int_{|z|\leq \frac{1}{2C_\sigma}}\be[V(Z_u^\ve+\hat{\sigma}_uz)-V(Z_u^\ve)-\langle\nabla V(Z_u^\ve),\hat{\sigma}_uz\rangle]\nu_1(\d z)\d u,
    \end{align*}
    where $\hat{\sigma}_u=\sigma(u,X_u^\ve)-\sigma(u,\bar{X}_u^\ve)$. It follows from Lemma \ref{lem:averaged eq has Lip and linear growth} and \eqref{eq: nabla V(x_1)-nabla V(x_2)} that
    \begin{align*}
        \be[V(Z_t^\ve)]&\leq C_{\kappa,p}\theta^\frac{p}{2}+\int_s^t\be[\langle \nabla V(Z_u^\ve), b(u,X_u^\ve,Y_u^\ve)-\bar{b}(u,u/\ve,\bar{X}_u^\ve)\rangle]\d u+C_{\kappa,p}\int_s^t\be[V(Z_u^\ve)]\d u\\
        &=C_{\kappa,p}\theta^\frac{p}{2}+\int_s^t\be[\langle \nabla V(Z_u^\ve), b(u,X_u^\ve,Y_u^\ve)-\bar{b}(u,u/\ve,X_u^\ve)\rangle]\d u\\
        &\quad+\int_s^t\be[\langle \nabla V(Z_u^\ve), \bar{b}(u,u/\ve,X_u^\ve)-\bar{b}(u,u/\ve,\bar{X}_u^\ve)\rangle]\d u+C_{\kappa,p}\int_s^t\be[V(Z_u^\ve)]\d u \\
        &\leq C_\kappa\theta^\frac{p}{2}+C_{\kappa,p}\int_s^t\be[V(Z_u^\ve)]\d u+I,
    \end{align*}
    where
    \[I=\sup_{t\in[s,T]}\left|\int_s^t\be[\langle \nabla V(Z_u^\ve), b(u,X_u^\ve,Y_u^\ve)-\bar{b}(u,u/\ve,X_u^\ve)\rangle]\d u\right|.\]
    Therefore, Gronwall's inequality yields
    \begin{align}\label{eq: sup E[V(Z)] leq I1+I2+I3}
        \sup_{t\in[s,T]}\be[V(Z_t^\ve)]&\leq C_{\kappa,p,s,T}\theta^\frac{p}{2} +C_{\kappa,p,s,T}(I_1+I_2+I_3).
    \end{align}
    Here,
    \begin{align*}
        I_1&=\sup_{t\in[s,T]}\left|\int_s^t\be[\langle \nabla V(Z_u^\ve)-\nabla V(Z_{u_\Delta}^\ve), b(u,X_u^\ve,Y_u^\ve)-\bar{b}(u,u/\ve,X_u^\ve)\rangle]\d u\right|,\\
        I_2&=\sup_{t\in[s,T]}\left|\int_s^t\be[\langle \nabla V(Z_{u_\Delta}^\ve),\hat{b}_{u,\Delta}-\hat{\bar{b}}_{u,\Delta}\rangle]\d u\right|,\\
        I_3&=\sup_{t\in[s,T]}\left|\int_s^t\be[\langle \nabla V(Z_{u_\Delta}^\ve), b(u_\Delta,X_{u_\Delta}^\ve,\hat{Y}_u^\ve)-\bar{b}(u_\Delta,u/\ve,X_{u_\Delta}^\ve)\rangle]\d u\right|,
    \end{align*}
    where $\hat{b}_{u,\Delta}=b(u,X_u^\ve,Y_u^\ve)-b(u_\Delta,X_{u_\Delta}^\ve,\hat{Y}_u^\ve)$ and $\hat{\bar{b}}_{u,\Delta}=\bar{b}(u,u/\ve,X_u^\ve)-\bar{b}(u_\Delta,u/\ve,X_{u_\Delta}^\ve)$.
    \vskip5pt\noindent
    \textbf{Step 2.} This step is to estimate $I_1$, $I_2$ and $I_3$, respectively. For the term  $I_1$, we apply \eqref{eq: nabla V(x_1)-nabla V(x_2)}, \eqref{cond:linear growth}, \eqref{eq:averaged eq has Lip and linear growth}, \eqref{eq: E[|bar{X}_t^ve-bar{X}_{s+t(Delta)}|^p]}, H\"older's inequality, Lemmas \ref{lem:sup E[|X|] and sup E[|Y|]} and \ref{lem:X(t+h)-X(t)} to obtain
    \begin{equation}\label{estmate I1}
        \begin{aligned}
            I_1&\leq C_{p}\int_s^T\be[|b(u,X_u^\ve,Y_u^\ve)-\bar{b}(u,u/\ve,X_u^\ve)||Z_{u,\Delta}^\ve-\bar{Z}_{u,\Delta}^\ve|^{p-1}]\d u\\
            &\leq C_{p}\int_s^T(\be[|b(u,X_u^\ve,Y_u^\ve)-\bar{b}(u,u/\ve,X_u^\ve)|^p])^\frac{1}{p}(\be[|Z_{u,\Delta}^\ve-\bar{Z}_{u,\Delta}^\ve|^p])^\frac{p-1}{p}\d u\\
            &\leq C_{\kappa,p,s,T}\int_s^T(\be[1+|X_u^\ve|^p+|Y_u^\ve|^p])^\frac{1}{p}((\be[|Z_{u,\Delta}^\ve|^p])^\frac{p-1}{p}+(\be[|\bar{Z}_{u,\Delta}^\ve|^p])^\frac{p-1}{p})\d u\\
            &\leq C_{\kappa,p,s,T}(1+|x|^p+|y|^p)(\Delta^{p-1}+\Delta^\frac{p-1}{\alpha}),
        \end{aligned}
    \end{equation}
    where $Z_{u,\Delta}^\ve=X_u^\ve-X_{u_\Delta}^\ve$ and $\bar{Z}_{u,\Delta}^\ve=\bar{X}_u^\ve-\bar{X}_{u_\Delta}^\ve$.  For the term $I_2$, it follows from \eqref{cond:Lip}, \eqref{eq:averaged eq has Lip and linear growth}, \eqref{eq:|nabla V|}, H\"older's inequality, Lemmas \ref{lem:sup E[|X|] and sup E[|Y|]}, \ref{lem:Y(t)-hatY(t)}, \ref{lem:X(t+h)-X(t)} and \ref{lem:bar{X}_t^ve} that
    \begin{equation}\label{estmate I2}
        \begin{aligned}
            I_2&\leq C_{p}\int_s^T\be[|Z_{u_\Delta}^\ve|^{p-1}(|u-u_\Delta|^\frac{1}{\alpha}+|Z_{u,\Delta}^\ve|+|Y_u^\ve-\hat{Y}_u^\ve|)]\d u\\
            &\leq C_{\kappa,p}\int_s^T(\be[|Z_{u_\Delta}^\ve|^p])^\frac{p-1}{p}(\Delta^\frac{p}{\alpha}+\be[|Z_{u,\Delta}^\ve|^p+|Y_u^\ve-\hat{Y}_u^\ve|^p])^\frac{1}{p}\d u\\
            &\leq C_{\kappa,p}\int_s^T((\be[|X_{u_\Delta}^\ve|^p])^\frac{p-1}{p}+(\be[|\bar{X}_{u_\Delta}^\ve|^p])^\frac{p-1}{p})(\Delta^\frac{p}{\alpha}+\be[|Z_{u,\Delta}^\ve|^p+|Y_u^\ve-\hat{Y}_u^\ve|^p])^\frac{1}{p}\d u\\
            &\leq C_{\kappa,p,s,T}(1+|x|^{p}+|y|^{p})(\Delta+\Delta^\frac{1}{\alpha}).
        \end{aligned}
    \end{equation}
    Let $t(\Delta)=\lfloor(t-s)/\Delta\rfloor$. For the term $I_3$, it is not difficult to see that
    \begin{equation}\label{estmate I3}
        \begin{aligned}
            I_3&\leq \sup_{t\in[s,T]}\left|\sum_{k=0}^{t(\Delta)-1}\int_{s+k\Delta}^{s+(k+1)\Delta}\be[\langle \nabla V(Z_{u_\Delta}^\ve),b(u_\Delta,X_{u_\Delta}^\ve,\hat{Y}_u^\ve)-\bar{b}(u_\Delta,u/\ve,X_{u_\Delta}^\ve)\rangle]\d u\right|\\
            &\quad+\sup_{t\in[s,T]}\left|\int_{t_\Delta}^t\be[\langle \nabla V(Z_{u_\Delta}^\ve),b(u_\Delta,X_{u_\Delta}^\ve,\hat{Y}_u^\ve)-\bar{b}(u_\Delta,u/\ve,X_{u_\Delta}^\ve)\rangle]\d u\right|\\
            &=:I_{31}+I_{32}.
        \end{aligned}
    \end{equation}
    We now turn to the estimation of the term $I_{31}$. For any random variables $\xi,\eta\in\cf_s$, consider the integral equation
    \begin{align*}
        \tilde{Y}_t^{\ve,s,\eta,\xi}=\eta+\ve^{-1}\int_s^tf(u/\ve,\xi,\tilde{Y}_u^{\ve,s,\eta,\xi})\d u+\ve^{-\frac{1}{\alpha}}\int_s^tg(u/\ve,\tilde{Y}_u^{\ve,s,\eta,\xi})\d L_u^2.
    \end{align*}
    For all $k\in\bn$ and $s+k\Delta\leq t< s+(k+1)\Delta$, it follows from the definition of $\hat{Y}_t^\ve$ that 
    \begin{align*}
        \hat{Y}_t^\ve=\tilde{Y}_t^{\ve,s+k\Delta,\hat{Y}_{s+k\Delta}^\ve,X_{s+k\Delta}^\ve}.
    \end{align*}
   Then for any $s+k\Delta\leq u\leq s+(k+1)\Delta$, we derive that
    \begin{align*}
        &~\be[\langle \nabla V(Z_{s+k\Delta}^\ve),b(s+k\Delta,X_{s+k\Delta}^\ve,\hat{Y}_u^\ve)-\bar{b}(s+k\Delta,u/\ve,X_{s+k\Delta}^\ve)\rangle]\\
        =&~\be[\be[\langle\nabla V(Z_{s+k\Delta}^\ve),b(s+k\Delta,X_{s+k\Delta}^\ve,\Tilde{Y}_u^{\ve,s+k\Delta,\hat{Y}_{s+k\Delta}^\ve,X_{s+k\Delta}^\ve})-\bar{b}(s+k\Delta,u/\ve,X_{s+k\Delta}^\ve)\rangle|\cf_{s+k\Delta}]]\\
        =&~\be[\langle\nabla V(z),\be[b(s+k\Delta,x,\Tilde{Y}_u^{\ve,s+k\Delta,y,x})-\bar{b}(s+k\Delta,u/\ve,x)]\rangle\b1_{y=\hat{Y}_{s+k\Delta}^\ve,x=X_{s+k\Delta}^\ve,z=Z_{s+k\Delta}^\ve}],
    \end{align*}
    where the last equality follows from the independence of $\tilde{Y}_t^{\ve,s+k\Delta,y,x}$ from $\cf_{s+k\Delta}$, and the $\cf_{s+k\Delta}$-measurability of $X_{s+k\Delta}^\ve$, $Y_{s+k\Delta}^\ve$ and $X_{s+k\Delta}^\ve-\bar{X}_{s+k\Delta}^\ve$. 
    By the definition of $\Tilde{Y}_t^{\ve,s,y,x}$, we have
    \begin{equation}\label{eq:tilde Y_{ve t}^{ve,s+k Delta,x,y}}
    \begin{aligned}
        \tilde{Y}_{\ve t}^{\ve,s+k\Delta,y,x}&=y+\ve^{-1}\int_{s+k\Delta}^{\ve t} f(u/\ve,x,\tilde{Y}_u^{\ve,s+k\Delta,y,x})\d u+\ve^{-\frac{1}{\alpha}}\int_{s+k\Delta}^{\ve t}g(u/\ve,\tilde{Y}_u^{\ve,s+k\Delta,y,x})\d L_u^2\\
        &=y+\int_{(s+k\Delta)/\ve}^tf(u,x,\tilde{Y}_{\ve u}^{\ve,s+k\Delta,y,x})\d u+\int_{(s+k\Delta)/\ve}^tg(u,\tilde{Y}_{\ve u}^{\ve,s+k\Delta,y,x})\d \bar{L}_u^2,
    \end{aligned}
    \end{equation}
    for all $s+k\Delta\leq \ve t< s+(k+1)\Delta$, where $\bar{L}_u^2=\ve^{-\frac{1}{\alpha}}L_{\ve u}^2$. Recall that
    \begin{equation}\label{eq:Y_t^{(s+k Delta) ve,x,y}}
        Y_t^{(s+k\Delta)/\ve,y,x}=y+\int_{(s+k\Delta)/\ve}^tf(u,x,Y_u^{(s+k\Delta)/\ve,y,x})\d u+\int_{(s+k\Delta)/\ve}^tg(u,Y_u^{(s+k\Delta)/\ve,y,x})\d L_u^2.
    \end{equation}
    By the uniqueness of the solutions to \eqref{eq:tilde Y_{ve t}^{ve,s+k Delta,x,y}} and \eqref{eq:Y_t^{(s+k Delta) ve,x,y}}, the processes  $\tilde{Y}_t^{\ve,s+k\Delta,y,x}$ and $Y_{t/\ve}^{(s+k\Delta)/\ve,y,x}$ have the same distribution on the interval $[s+k\Delta, s+(k+1)\Delta)$. 
    It follows from the Markov property of $Y_t^{s,x,y}$ and Lemma \ref{lem:bar E[b]-bar b} that
        \begin{align}\label{est I31}
            I_{31}&\leq \sum_{k=0}^{T(\Delta)-1}\left|\int_{s+k\Delta}^{s+(k+1)\Delta}\be[\langle \nabla V(z),\be[b(s+k\Delta,x,Y_{u/\ve}^{(s+k\Delta)/\ve,y,x})-\bar{b}(s+k\Delta,u/\ve,x)]\rangle\right.\notag\\
            &\left.\qquad\qquad\qquad\qquad\qquad\qquad\qquad\qquad\qquad\qquad\quad\times\b1_{\{y=\hat{Y}_{s+k\Delta}^\ve,x=X_{s+k\Delta}^\ve,z=Z_{s+k\Delta}^\ve\}}]\d u\right|\notag\\
            &\leq C_{\kappa,p}\sum_{k=0}^{T(\Delta)-1}\int_{s+k\Delta}^{s+(k+1)\Delta}\be[|Z_{s+k\Delta}^\ve|^{p-1}(1+|X_{s+k\Delta}^\ve|+|\hat{Y}_{s+k\Delta}^\ve|)]\re^{-(\lambda-q)(u-s-k\Delta)/\ve}\d u\notag\\
            &\leq C_{\kappa,p}\sum_{k=0}^{T(\Delta)-1}\int_{s+k\Delta}^{s+(k+1)\Delta}(\be[|Z_{s+k\Delta}^\ve|^p])^\frac{p-1}{p}(\be[1+|X_{s+k\Delta}^\ve|^p+|\hat{Y}_{s+k\Delta}^\ve|^p])^\frac{1}{p}\re^{-(\lambda-q)(u-s-k\Delta)/\ve}\d u\notag\\
            &\leq C_{\kappa,p,s,T}(1+|x|^p+|y|^p)\Delta^{-1}\max_{k\in[0,T(\Delta)-1]}\int_{s+k\Delta}^{s+(k+1)\Delta}\re^{-(\lambda-q)(u-s-k\Delta)/\ve}\d u\notag\\
            &\leq C_{\kappa,p,s,T}(1+|x|^p+|y|^p)\ve\Delta^{-1}.
        \end{align}
    For the term $I_{32}$, we use H\"older's inequality and get
    \begin{equation}\label{est I32}
        \begin{aligned}
            I_{32}&\leq  C_{\kappa,p}\sup_{u\in[s,T]}\be[(1+|X_{u_\Delta}^\ve|+|\hat{Y}_u^\ve|)|Z_{u_\Delta}^\ve|^{p-1}]\Delta\\
            &\leq  C_{\kappa,p}\sup_{u\in[s,T]}(\be[(1+|X_{u_\Delta}^\ve|^p+|\hat{Y}_u^\ve|^{p}])^\frac{1}{p}(\be[|Z_{u_\Delta}^\ve|^{p}])^\frac{p-1}{p}\Delta\\
            &\leq C_{\kappa,p,s,T}(1+|x|^p+|y|^p)\Delta. 
        \end{aligned}
    \end{equation}
    Substituting \eqref{est I31} and \eqref{est I32} into \eqref{estmate I3}, we have
    \begin{align}\label{estmate I3n}
        I_3\leq C_{\kappa,p,s,T}(1+|x|^p+|y|^p)(\ve\Delta^{-1}+\Delta).
    \end{align}
    \vskip5pt
    \noindent\textbf{Step 3.} From a combination of \eqref{estmate I1}, \eqref{estmate I2} and \eqref{estmate I3n}, we derive that
    \begin{align}\label{eq:step3}
        \sup_{t\in[s,T]}\be[(\theta+|X_t^\ve-\bar{X}_t^\ve|^2)^\frac{p}{2}]
        &\leq C_{\kappa,p,s,T}(1+|x|^p+|y|^p)(\Delta^\frac{p-1}{\alpha}+\ve\Delta^{-1}).
    \end{align}
    Choosing $\Delta=\ve^{\frac{\alpha}{\alpha+p-1}}$ and letting $\theta$ tend to $0$, we conclude that
    \begin{equation*}\label{eq:|X^ve-bar X^ve|^p leq ve}
        \sup_{t\in[s,T]}\be[|X_t^\ve-\bar{X}_t^\ve|^p]\leq C_{\kappa,p,s,T}(1+|x|^p+|y|^p)\ve^\frac{p-1}{\alpha+p-1}.
    \end{equation*}
    This completes the proof.
\end{proof}

 \subsection{The averaged equation of the second approximation}\label{subsec:The averaged equation without ve}
 This section is devoted to presenting the construction of the averaged equation independent of $\ve$ and proving the second main theorem. Recall that the averaged equation \eqref{eq:SDEAV2} is given by
 \begin{equation*}
        \begin{cases}
          \d \bar{X}_t=\bar{b}(t,\bar{X}_t)\d t+\sigma(t,\bar{X}_t)\d L_t^1,\\
          \bar{X}_s=x,
        \end{cases}
     \end{equation*}
   where
   \[\bar{b}(t_1,x)=\frac{1}{\tau_2}\int_0^{\tau_2}\bar{b}(t_1,t_2,x)\d t_2.\]
   By Lemma \ref{lem:averaged eq has Lip and linear growth}, a straightforward calculation yields that for any $t\in\br$ and $x_1,x_2\in\br^m$, 
   \begin{align}\label{eq:bar b Lip}
       |\bar{b}(t,x_1)-\bar{b}(t,x_2)|\leq C_\kappa|x_1-x_2|,~~\bar{b}(t,x_1)|\leq C_\kappa(1+|x_1|),
   \end{align}
   and for any $t,s\in\br$ and $x\in\br^m$,
   \begin{equation}\label{eq:bar b Holder}
       \begin{aligned}
           |\bar{b}(t,x)-\bar{b}(s,x)|\leq |t-s|^\frac{1}{\alpha}.
       \end{aligned}
   \end{equation}
   Consequently, it follows from \cite[Theorem 6.2.9]{A09} that SDE \eqref{eq:SDEAV2} admits a unique solution.

   \begin{lemma}\label{lem:bar X}
       Suppose that Assumptions \ref{cond} and \ref{con:periodic} hold. Then there exists $C_{\kappa,p,s,T}>0$ such that for all $p\in[1,\alpha)$,
       \begin{equation}
        \sup_{t\in[s,T]}\be[|\bar{X}_t|^p]\leq C_{\kappa,p,s,T}(1+|x|^p),
    \end{equation}
    and for all $\Delta\in(0,1)$,
    \begin{align}\label{eq: E[|bar{X}_t-bar{X}_{s+t(Delta)}|^p]}
        \sup_{t\in[s,T]}\be[|\bar{X}_t-\bar{X}_{t_\Delta}|^p]\leq C_{\kappa,p,s,T}(1+|x|^p)(\Delta^p+\Delta^\frac{p}{\alpha}).
    \end{align}
   \end{lemma}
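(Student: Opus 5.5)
The plan is to follow the same template as Lemmas \ref{lem:sup E[|X|] and sup E[|Y|]} and \ref{lem:X(t+h)-X(t)}, now for the averaged equation \eqref{eq:SDEAV2}. The only structural inputs needed are the Lipschitz and linear growth bounds \eqref{eq:bar b Lip} for $\bar{b}(t,x)$ and the bound \eqref{eq:bounded} on $\sigma$. There is no fast component, so the argument is simpler than for $X_t^\ve$.

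\textbf{Moment bound.} For $p\in(1,\alpha)$ (the case $p=1$ is analogous), I apply It\^o's formula to $V(x)=(1+|x|^2)^{p/2}$ evaluated at $\bar{X}_t$. This splits $\be[V(\bar{X}_t)]$ into $V(x)$ plus three terms: a drift term, a large-jump term over $|z|>1$, and a compensated small-jump term over $|z|\le1$.
\begin{itemize}
\item For the drift term, I use $|\nabla V(x)|\le p(1+|x|^2)^{(p-1)/2}$, Young's inequality and $|\bar{b}(t,x)|\le C_\kappa(1+|x|)$. This bounds it by $C_{\kappa,p}\int_s^t\be[V(\bar{X}_r)]\d r+C_{\kappa,p}(t-s)$.
\item For the large-jump term, I write it via the first-order Taylor formula. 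Then $|a+b|^{p-1}\le |a|^{p-1}+|b|^{p-1}$ and $\Vert\sigma\Vert\le\Lambda$ bound it by $C_{\kappa,p}\int_{|z|>1}(|z|+|z|^p)\nu_1(\d z)\int_s^t\be[V(\bar{X}_r)]\d r$. This integral is finite because $p<\alpha$.
\item For the small-jump term, Lemma \ref{lem:H(y,z)} together with $(1+|v|^2)^{p/2-1}\le1$ bounds it by $\frac p2\Lambda^2\int_{|z|\le1}|z|^2\nu_1(\d z)(t-s)$.
\end{itemize}
Gronwall's inequality on $[s,T]$ then gives $\sup_{t\in[s,T]}\be[|\bar{X}_t|^p]\le C_{\kappa,p,s,T}(1+|x|^p)$. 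Before applying It\^o's formula rigorously, I localize with stopping times $\inf\{t:|\bar{X}_t|>R\}$, and then remove the localization by Fatou's lemma.

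\textbf{Increment bound.} I write
\[\bar{X}_t-\bar{X}_{t_\Delta}=\int_{t_\Delta}^t\bar{b}(r,\bar{X}_r)\d r+\int_{t_\Delta}^t\sigma(r,\bar{X}_r)\d L_r^1.\]
\begin{itemize}
\item For the drift part, H\"older's inequality gives a bound of $\Delta^{p-1}\int_{t_\Delta}^t\be[|\bar{b}(r,\bar{X}_r)|^p]\d r\le C_{\kappa,p,s,T}(1+|x|^p)\Delta^p$, using linear growth and the moment bound just proved.
\item For the stochastic integral, I split the L\'evy--It\^o decomposition at the level $|z|=\Delta^{1/\alpha}$, exactly as in the proof of Lemma \ref{lem:X(t+h)-X(t)}. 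BDG's inequality handles the compensated small jumps, giving $C_p(\Delta\Lambda^2\int_{|z|\le\Delta^{1/\alpha}}|z|^2\nu_1(\d z))^{p/2}$. Kunita's inequality handles the large jumps, giving $C_p\Delta\Lambda^p\int_{|z|>\Delta^{1/\alpha}}|z|^p\nu_1(\d z)$.
\item Scaling of $\nu_1$ makes each of these equal to $C_{\kappa,p}\Delta^{p/\alpha}$, using $|t-t_\Delta|\le\Delta$.
\end{itemize}
The two contributions together give \eqref{eq: E[|bar{X}_t-bar{X}_{s+t(Delta)}|^p]}.

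\textbf{Main obstacle.} The only delicate point is handling the $p$-th moment with $p<\alpha<2$ for the heavy-tailed large jumps, without an $L^2$ framework. This is resolved by the truncation at $\Delta^{1/\alpha}$ and the finiteness of $\int_{|z|>1}|z|^p\nu_1(\d z)$.
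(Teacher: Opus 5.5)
Your proposal is correct and follows essentially the same route as the paper. The paper omits this proof, pointing to the arguments already given for $X_t^\ve$ and $\bar{X}_t^\ve$: It\^o's formula for $(1+|x|^2)^{p/2}$ with the drift, large-jump and small-jump split plus Gronwall, then H\"older for the drift increment and the BDG/Kunita split at $|z|=\Delta^{1/\alpha}$ for the noise. One step deserves to be made explicit: for $p>1$, the bound $C_p\Delta\Lambda^p\int_{|z|>\Delta^{1/\alpha}}|z|^p\nu_1(\d z)$ on the uncompensated large-jump integral needs either the symmetry of $\nu_1$, which gives $\int_{|z|>\Delta^{1/\alpha}}z\,\nu_1(\d z)=0$, or an extra compensator term, and that term is also $O(\Delta^{p/\alpha})$.
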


   \begin{proof}
       The proof is omitted here due to its similarity to that of Lemma \ref{lem:bar{X}_t^ve}.
   \end{proof}

   \begin{lemma}\label{lem:averaged linear growth}
       For any $x\in\br^n$ and $T,t>0$, there exists $C>0$ such that
       \begin{equation*}
           \left|\frac{1}{T}\int_t^{t+T}\bar{b}(r_1,r_2,x)\d r_2-\frac{1}{\tau_2}\int_0^{\tau_2}\bar{b}(r_1,r_2,x)\d r_2\right|\leq \frac{C\tau_2}{T}(1+|x|).
       \end{equation*}
   \end{lemma}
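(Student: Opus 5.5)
The plan is the standard periodic-averaging argument: split $[t,t+T]$ into complete periods plus a remainder. Fix $r_1$ and $x$. By Assumption \ref{con:periodic}, $f(\cdot,x,y)$ and $g(\cdot,y)$ are $\tau_2$-periodic, so Theorem \ref{thm:periodic measures} gives that $\rho_{r_2}^x$ is $\tau_2$-periodic in $r_2$ and continuous in $\cw_1$. Since $b(r_1,x,\cdot)$ is Lipschitz, the map $r_2\mapsto\bar{b}(r_1,r_2,x)=\int b(r_1,x,w)\rho_{r_2}^x(\d w)$ is continuous, hence measurable and locally integrable. It is also $\tau_2$-periodic. Lemma \ref{lem:averaged eq has Lip and linear growth} gives the uniform bound
\[
\sup_{r_2}|\bar{b}(r_1,r_2,x)|\leq C_\kappa(1+|x|).
\]

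Write $T=N\tau_2+\delta$ with $N=\lfloor T/\tau_2\rfloor$ and $\delta\in[0,\tau_2)$. By periodicity, the integral over each full period $[t+k\tau_2,t+(k+1)\tau_2]$ equals $\int_0^{\tau_2}\bar{b}(r_1,r_2,x)\d r_2=:\tau_2 M$, independently of $t$. Therefore
\[
\frac{1}{T}\int_t^{t+T}\bar{b}(r_1,r_2,x)\d r_2-M=\frac{N\tau_2-T}{T}M+\frac{1}{T}\int_{t+N\tau_2}^{t+T}\bar{b}(r_1,r_2,x)\d r_2 .
\]
We have $|N\tau_2-T|=\delta<\tau_2$, $|M|\leq C_\kappa(1+|x|)$, and the remainder integral is bounded by $\delta\,C_\kappa(1+|x|)\leq \tau_2 C_\kappa(1+|x|)$. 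Summing the two terms gives the bound $2C_\kappa\tau_2(1+|x|)/T$, which is the claim with $C=2C_\kappa$.

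There is no real obstacle. The only points needing care are the measurability of $\bar{b}$ in $r_2$, which comes from the $\cw_1$-continuity in Step 3 of Theorem \ref{thm:periodic measures}, and the periodicity $\rho^x_{r_2+\tau_2}=\rho^x_{r_2}$. The case $T<\tau_2$ (so $N=0$) is covered by the same computation.
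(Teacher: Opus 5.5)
Your proposal is correct and follows essentially the same argument as the paper: split $[t,t+T]$ into $\lfloor T/\tau_2\rfloor$ full periods plus a remainder of length less than $\tau_2$, then use the $\tau_2$-periodicity of $\bar{b}$ in $r_2$ and the linear growth bound of Lemma~\ref{lem:averaged eq has Lip and linear growth}. Your remark that $r_2\mapsto\bar{b}(r_1,r_2,x)$ is measurable, via the $\cw_1$-continuity of $\rho^x_{r_2}$, supplies a detail the paper leaves implicit.
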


   \begin{proof}
       For any $T>0$, there exists a nonnegative integer $M$ such that $M\tau_2\leq T< (M+1)\tau_2$. Note that
       \begin{align*}
           \frac{1}{T}\int_t^{t+T}\bar{b}(r_1,r_2,x)\d r_2=\frac{1}{T}\int_t^{t+M\tau_2}\bar{b}(r_1,r_2,x)\d r_2+\frac{1}{T}\int_{t+M\tau_2}^{t+T}\bar{b}(r_1,r_2,x)\d r_2.
       \end{align*}
       Since $\bar{b}(t_1,t_2,x)$ is the periodic function in $t_2$ with the period $\tau_2$, it follows that
       \begin{align*}
           \int_t^{t+\tau_2}\bar{b}(r_1,r_2,x)\d r_2=\int_0^{\tau_2}\bar{b}(r_1,r_2,x)\d r_2.
       \end{align*}
       Therefore, we derive that
       \begin{align*}
           &~\left|\frac{1}{T}\int_t^{t+T}\bar{b}(r_1,r_2,x)\d r_2-\frac{1}{\tau_2}\int_0^{\tau_2}\bar{b}(r_1,r_2,x)\d r_2\right|\\
           =&~\left|\frac{M}{T}\int_0^{\tau_2}\bar{b}(r_1,r_2,x)\d r_2-\frac{1}{\tau_2}\int_0^{\tau_2}\bar{b}(r_1,r_2,x)\d r_2+\frac{1}{T}\int_{t+M\tau_2}^{t+T}\bar{b}(r_1,r_2,x)\d r_2\right|\\
           \leq&~C(1+|x|)\left(\left|\frac{M}{T}-\frac{1}{\tau_2}\right|\tau_2+\frac{\tau_2}{T}\right)\\
           \leq&~\frac{C\tau_2}{T}(1+|x|),
       \end{align*}
       where the penultimate inequality is obtained from the linear growth property of $\bar{b}(t_1,t_2,x)$.
   \end{proof}
   
   Building on the above discussion, we now turn to proving the second averaging principle.
   \begin{proof}[Proof of Theorem \ref{thm:X_t^ve-bar X}]
    Notice that
    \begin{align*}
        \d(\bar{X}_t^\ve-\bar{X}_t)&=(\bar{b}(t,t/\ve,\bar{X}_t^\ve)-\bar{b}(t,\bar{X}_t))\d t+(\sigma(t,\bar{X}_t^\ve)-\sigma(t,\bar{X}_t))\d L_t^1.
    \end{align*}
    For any $p\in(1,\alpha)$ and any fixed $\theta\in(0,1)$, we define $V(x)=(\theta+|x|^2)^\frac{p}{2}$. Denote $\bar{Z}_t^\ve:=\bar{X}_t^\ve-\bar{X}_t$, it is a consequence of It\^o's formula that
    \begin{equation}\label{eq:nabla d X}
        \begin{aligned}
            \be[V(\bar{Z}_t^\ve)]&=\theta^\frac{p}{2}+\int_s^t\be[\langle\nabla V(\bar{Z}_u^\ve),\bar{b}(u,u/\ve,\bar{X}_u^\ve)-\bar{b}(u,\bar{X}_u)\rangle]\d u\\
            &\quad+\int_s^t\int_{|z|> \frac{1}{2C_\sigma}}\be[V(\bar{Z}_u^\ve+\hat{\sigma}_uz)-V(\bar{Z}_u^\ve)]\nu_1(\d z)\d u\\
            &\quad+\int_s^t\int_{|z|\leq \frac{1}{2C_\sigma}}\be[V(\bar{Z}_u^\ve+\hat{\sigma}_uz)-V(\bar{Z}_u^\ve)-\langle\nabla V(\bar{Z}_u^\ve),\hat{\sigma}_uz\rangle]\nu_1(\d z)\d u,
        \end{aligned}
    \end{equation}
    where $\hat{\sigma}_u:=\sigma(u,\bar{X}_u^\ve)-\sigma(u,\bar{X}_u)$. By following arguments analogous to those in \eqref{eq: sup E[V(Z)] leq I1+I2+I3}, we obtain
    \begin{align*}
        \sup_{t\in[s,T]}\be[V(\bar{Z}_t^\ve)]&\leq C_{\kappa,p,s,T}\theta^\frac{p}{2} +C_{\kappa,p,s,T}(I_1+I_2+I_3),
    \end{align*}
    where
    \begin{align*}
        I_1&=\sup_{t\in[s,T]}\left|\int_s^t\be[\langle \nabla V(\bar{Z}_u^\ve)-\nabla V(\bar{Z}_{u_\Delta}^\ve), \bar{b}(u,u/\ve,\bar{X}_u^\ve)-\bar{b}(u,\bar{X}_u^\ve)\rangle]\d u\right|,\\
        I_2&=\sup_{t\in[s,T]}\left|\int_s^t\be[\langle \nabla V(\bar{Z}_{u_\Delta}^\ve),(\bar{b}(u,u/\ve,\bar{X}_u^\ve)-\bar{b}(u_\Delta,u/\ve,\bar{X}_{u_\Delta}^\ve))-(\bar{b}(u,\bar{X}_u^\ve)-\bar{b}(u_\Delta,\bar{X}_{u_\Delta}^\ve))\rangle]\d u\right|,\\
        I_3&=\sup_{t\in[s,T]}\left|\int_s^t\be[\langle \nabla V(\bar{Z}_{u_\Delta}^\ve), \bar{b}(u_\Delta,u/\ve,\bar{X}_{u_\Delta}^\ve)-\bar{b}(u_\Delta,\bar{X}_{u_\Delta}^\ve)\rangle]\d u\right|.
    \end{align*}
    We now turn to estimate $I_1$, it follows from \eqref{eq: nabla V(x_1)-nabla V(x_2)}, H\"older's inequality, \eqref{cond:linear growth}, Lemmas \ref{lem:bar{X}_t^ve} and \ref{lem:bar X} that
    \begin{equation}\label{eq:2I1}
    \begin{aligned}
        I_1&\leq C_p\int_s^T\be[|\bar{Z}_u^\ve-\bar{Z}_{u_\Delta}|^{p-1}|\bar{b}(u,u/\ve,\bar{X}_u^\ve)-\bar{b}(u,\bar{X}_u^\ve)|]\d u\\
        &\leq C_p\int_s^T\be[|\bar{Z}_{u,\Delta}^\ve-\bar{Z}_{u,\Delta}|^{p-1}(1+|\bar{X}_u^\ve|)]\d u\\
        &\leq C_p\int_s^T((\be[|\bar{Z}_{u,\Delta}^\ve|^p])^\frac{p-1}{p}+\be[|\bar{Z}_{u,\Delta}|^p])^\frac{p-1}{p})(1+\be[|\bar{X}_u^\ve|^p])^\frac{1}{p}\d u\\
        &\leq C_{\kappa,p,s,T}(1+|x|^p)(\Delta^{p-1}+\Delta^\frac{p-1}{\alpha}),
    \end{aligned}
    \end{equation}
    where $\bar{Z}_{u,\Delta}^\ve=\bar{X}_u^\ve-\bar{X}_{u_\Delta}^\ve$ and $\bar{Z}_{u,\Delta}=\bar{X}_u-\bar{X}_{u_\Delta}$. For the term $I_2$, using \eqref{eq:bar b Holder}, and applying the H\"older's inequality, together with Lemmas \ref{lem:bar{X}_t^ve} and \ref{lem:bar X} again, we obtain
    \begin{equation}\label{eq:2I2}
    \begin{aligned}
        I_2&\leq C_p\int_s^T\be[|\bar{Z}_{u_\Delta}^\ve|^{p-1}(|u-u_\Delta|^\frac{1}{\alpha}+|\bar{Z}_{u,\Delta}^\ve|)]\d u\\
        &\leq C_\kappa\int_s^T(\be[|\bar{Z}_{u_\Delta}^\ve|^p])^\frac{p-1}{p}(\Delta^\frac{p}{\alpha}+\be[|\bar{Z}_{u,\Delta}^\ve|^p])^\frac{1}{p}\d u\\
        &\leq C_{\kappa,p}\int_s^T((\be[|\bar{X}_{u_\Delta}^\ve|^p])^\frac{p-1}{p}+(\be[|\bar{X}_{u_\Delta}|^p])^\frac{p-1}{p})(\Delta^\frac{p}{\alpha}+\be[|\bar{Z}_{u,\Delta}^\ve|^p])^\frac{1}{p}\d u\\
        &\leq C_{\kappa,p,s,T}(1+|x|^p)(\Delta+\Delta^\frac{1}{\alpha}).
    \end{aligned}
    \end{equation}
    Recall that $t_\Delta=s+\lfloor(t-s)/\Delta\rfloor\Delta$ and $t(\Delta)=\lfloor(t-s)/\Delta\rfloor$. For the term $I_3$,  we derive that
    \begin{align*}
        I_3&\leq\sup_{t\in[s,T]}\left|\sum_{k=0}^{t(\Delta)-1}\be\left[\int_{s+k\Delta}^{s+(k+1)\Delta}\langle\nabla V(\bar{Z}_{s+k\Delta}^\ve),\bar{B}(u/\ve)\rangle\d u\right]\right|\\
        &\quad+\sup_{t\in[s,T]}\left|\int_{t_\Delta}^t\be[\nabla V(\bar{Z}_{u_\Delta}^\ve),\bar{b}(u_\Delta,u/\ve,\bar{X}_{u_\Delta}^\ve)-\bar{b}(u_\Delta,\bar{X}_{u_\Delta}^\ve)\rangle]\d u\right|\\
        &=:I_{31}+I_{32},
    \end{align*}
    where $\bar{B}(u)=\bar{b}(s+k\Delta,u,\bar{X}_{s+k\Delta}^\ve)-\bar{b}(s+k\Delta,\bar{X}_{s+k\Delta}^\ve)$. We estimate $I_{31}$ and $I_{32}$ separately as follows. For the term $I_{31}$, we have
    \begin{align*}
        I_{31}&\leq \sum_{k=0}^{T(\Delta)-1}\be\left[\left|\int_{s+k\Delta}^{s+(k+1)\Delta}\langle\nabla V(\bar{Z}_{s+k\Delta}^\ve),\bar{B}(u/\ve)\rangle\d u\right|\right]\\
        &\leq \frac{C_{p,s,T}}{\Delta}\max_{k\in[0,T(\Delta)-1]}\be\left[|\bar{Z}_{s+k\Delta}^\ve|^{p-1}\left|\int_{s+k\Delta}^{s+(k+1)\Delta}\bar{B}(u/\ve)\d u\right|\right].
    \end{align*}
    Note that
    \begin{align*}
        &\int_{s+k\Delta}^{s+(k+1)\Delta}\bar{B}(u/\ve)\d u=\ve\int_{\frac{s+k\Delta}{\ve}}^{\frac{s+k\Delta}{\ve}+\frac{\Delta}{\ve}}\bar{B}(u)\d u,
    \end{align*}
    then it follows from Lemma \ref{lem:averaged linear growth} that
    \begin{align*}
        I_{31}
        &\leq C_{p,s,T}\max_{k\in[0,T(\Delta)-1]}\be\left[|\bar{Z}_{s+k\Delta}^\ve|^{p-1}\left|\frac{\ve}{\Delta}\int_{\frac{s+k\Delta}{\ve}}^{\frac{s+k\Delta}{\ve}+\frac{\Delta}{\ve}}\bar{B}(u)\d u\right|\right]\\
        &\leq C_{p,s,T}\max_{k\in[0,T(\Delta)-1]}\left(\be\left[\left|\frac{\ve}{\Delta}\int_{\frac{s+k\Delta}{\ve}}^{\frac{s+k\Delta}{\ve}+\frac{\Delta}{\ve}}\bar{B}(u)\d u\right|^p\right]\right)^\frac{1}{p}(\be[|\bar{Z}_{s+k\Delta}^\ve|^p])^\frac{p-1}{p}\\
        &\leq C_{p,s,T}\sup_{t\in[s,T]}(1+\be[|\bar{X}_t^\ve|^{p}]+\be[|\bar{X}_t|^{p}])\ve\Delta^{-1}\\
        &\leq C_{\kappa,p,s,T}(1+|x|^p)\ve\Delta^{-1}.
    \end{align*}
    For the term $I_{32}$, we have
    \begin{align*}
        I_{32}&\leq  C_{\kappa,p,s,T}\sup_{u\in[s,T]}\be[|Z_{u_\Delta}^\ve|^{p-1}(1+|\bar{X}_{u_\Delta}^\ve|)]\Delta\\
            &\leq  C_{\kappa,p,s,T}\sup_{u\in[s,T]}(\be[|Z_{u_\Delta}^\ve|^{p}])^\frac{p-1}{p}(1+\be[|\bar{X}_{u_\Delta}^\ve|^p])^\frac{1}{p}\Delta\\
            &\leq C_{\kappa,p,s,T}(1+|x|^p)\Delta.
    \end{align*}
    Therefore, we derive that
    \begin{equation}\label{eq:2I3}
        I_3\leq C_{\kappa,p,s,T}(1+|x|^p)(\ve\Delta^{-1}+\Delta).
    \end{equation}
    The rest of the proof of this theorem can be completed similarly as that of Theorem \ref{thm:X_t^ve-bar{X}_t^ve}, together with an application of the triangle inequality.
\end{proof}

\section{Application to the climate-weather multiscale system with $\alpha$-stable noise}\label{sec:Example}
In this section, we present an example of a real physical system to illustrate the theoretical results derived in the previous sections. For simplicity, the discussion is confined to the one dimensional setting. Nevertheless, the extension to higher dimensions is straightforward.

The individual dynamics of the weather and climate systems driven by continuous noise has been discussed in \cite{BB07,BPSV83}. The connection of these weather and climate models with the multiscale system can be seen in Appendix \ref{A}, where we only discuss the Brownian motion case as the original model of \cite{BB07,BPSV83} did. It was revealed in \cite{IV01} that stable noise is also important in the stochastic resonance climate model.
In this section, we consider the following multiscale time-inhomogeneous SDEs driven by additive $\alpha$-stable noise, which provides a suitable framework for modelling weather-climate dynamics subject to perturbations caused by extreme events. For simplicity, we only consider the case $\alpha=1.5$, i.e.,
\begin{equation}\label{example1}
    \begin{cases}
      \d X_t^\epsilon =\left(6000\left(1-\exp\left(-\frac{50}{(X_t^\epsilon)^2}\right)\right)(X_t^\epsilon-(X_t^\epsilon)^3)+660\cos(2\pi t)+300 Y_t^\epsilon\right)\d t+3.41\d L_t^1,~X_s^\ve=x, \\
      \d Y_t^\epsilon =\frac{1}{\ve}\left(2336+574.5X_t^\epsilon-365\pi Y_t^\epsilon+3796\cos \left(\frac{2\pi t}{\ve}+2.858\right)\right)\d t+\frac{2.55}{\ve^{2/3}}\d L_t^2,~Y_s^\ve=y.
   \end{cases}
\end{equation}
The reason for including the term $1-\exp(-50/x^2)$ in the above can be found in \cite{FLZ21}. Set 
\[b(t,x,y)=6000(1-\exp(-50x^{-2}))(x-x^3)+300 y+660\cos(2\pi t),~~\sigma(t,x)=3.41,\]
and
\[f(t,x,y)=2336+574.5x-365\pi y+3739\cos(2\pi t+2.858),~~g(t,y)=2.55.\]
It is not difficult to see that Assumptions \ref{cond} and \ref{con:periodic} hold with $\tau_1=1$ and $\tau_2=1$. Then the multiscale system \eqref{example1} admits a unique solution $(X_t^\ve,Y_t^\ve)$. Consequently, the corresponding time-inhomogeneous frozen equation is
\begin{equation*}
    \begin{cases}
        \d Y_t=\left(2336+574.5x-365\pi Y_t^\ve+3796\cos\left(2\pi t+2.853\right)\right)\d t+2.55\d L_t^2,\\
        Y_s=y.
    \end{cases}
\end{equation*}
It follows from Theorem \ref{thm:periodic measures} that the time-inhomogeneous equation above has a unique periodic measure $\rho_t^x$. According to Theorem \ref{thm:X_t^ve-bar{X}_t^ve}, for any $p\in(1,\alpha)$, it holds that
\begin{align*}
    \sup_{t\in[s,T]}\be[|X_t^\ve-\bar{X}_t^\ve|^p]\leq C_{\kappa,p,s,T}\ve^\frac{p-1}{\alpha+p-1}.
\end{align*}
Here, $\bar{X}_t^\ve$ satisfies the following averaged equation:
\begin{equation}\label{eq:w-c}
    \begin{cases}
        \d \bar{X}_t^\ve=\bar{b}(t,t/\ve,\bar{X}_t^\ve)\d t+3.41\d L_t^1,\\
        \bar{X}_s^\ve=x,    
    \end{cases}
\end{equation}
where $\be^x[Y_t]=\int_{\br}y\rho_t^x(\d y)$ and 
\[\bar{b}(t_1,t_2,x)=6000(1-\exp(-50x^{-2}))(x-x^3)+300\be^x[Y_{t_2}]+600\cos(2\pi t_1).\]
It is not difficult to see that $\bar{b}(t_1,t_2,x)$ is periodic in $t_1$ and $t_2$ with periods $1$ and $1$. Hence, SDE (\ref{eq:w-c}) is a random quasi-periodic system when the reciprocals of $1$ and $\ve$ are rationally linearly independent. 
Moreover, averaging $\bar{b}(t_1, t_2, x)$ over one period in the variable $t_2$, we obtain
\begin{align*}
    \bar{b}(t_1,x)=\int_0^1(6000(1-\exp(-50x^{-2}))(x-x^3)+300\be^x[Y_{t_2}]+600\cos(2\pi t_1))\d t_2.
\end{align*}
For any $p\in(1,\alpha)$, it follows from Theorem \ref{thm:X_t^ve-bar X} that
\begin{equation*}
    \sup_{t\in[s,T]}\be[|X_t^\ve-\bar{X}_t|^p]\leq C_{\kappa,p,s,T}\ve^\frac{p-1}{\alpha+p-1},
\end{equation*}
where $\bar{X}_t$ is the solution to the corresponding averaged equation:
\begin{equation*}
    \begin{cases}
        \d \bar{X}_t=\bar{b}(t,\bar{X}_t)\d t+3.41\d L_t^1,\\
        \bar{X}_s=x.    
    \end{cases}
\end{equation*}
Furthermore, it is not difficult to see that this is a random periodic system.

\appendices
\section{The climate-weather multiscale system}\label{A}
In this appendix, we introduce the weather and climate systems and couple them together to obtain a multiscale system, which provides a motivating example for our model \eqref{SDEs}.

The climate system proposed in Benzi, Parisi, Sutera, and  Vulpiani \cite{BPSV83} is of the following form:
\begin{equation}\label{eq:BPSV climate modle}
       \d X_t=(\alpha X_t-\beta X_t^3+A\cos(\omega t))\d t+\sigma\d W_t^1,
\end{equation}
where $\alpha=\beta=1$, $A=0.11$, $\omega=\frac{2\pi}{6000}$, $\sigma=0.25$ and $W_t^1$ is a Brownian motion on $\br^d$. This SDE is a random periodic model with a rescaled period $6000$. 
We would like to put the climate dynamics along with the whether system in a single multiscale system, where we take the latter as a system with period 1 (year). So we need to further rescale the time in SDE \eqref{eq:BPSV climate modle} accordingly. For this, 
letting $t=0.06u$, still using $t$ as the new time variable instead of $u$, we have
\begin{equation}\label{eq:climate system}
        \d \tilde{X}_t=0.06(\tilde{X}_t-\tilde{X}_t^3+0.11\cos(0.00002\pi t))\d t+0.25\sqrt{0.06}\d \tilde{W}_t.
\end{equation}
This makes the period of the system to be as $100000$, which is the approximate period of climate change with a time unit in ``years''. Let $\ve=10^{-5}$, $\alpha=\beta=6000\ve$, $\omega=2\pi\ve$,  $A=660\ve$, and $\sigma=0.25\sqrt{6000\ve}\approx6.12\sqrt{\ve}$ in \eqref{eq:BPSV climate modle}, then \eqref{eq:climate system} can be rewritten as
\begin{equation}\label{eq: general climat system}
        \d \tilde{X}_t= (6000\ve \tilde{X}_t-6000\ve \tilde{X}_t^3+660\ve\cos (2\pi \ve t))\d t+6.12\sqrt{\ve}\d \tilde{W}_t^1,.
\end{equation}
This is a special case of the general SDE:
\begin{equation}
    \d \tilde{X}_t=\ve b(\ve t,\tilde{X}_t)\d t+\sqrt{\ve}\sigma(\ve t,\tilde{X}_t)\d \tilde{W}_t,
\end{equation}
where
\[b(t,x)=6000x-6000x^3+660\cos(2\pi t),~~\sigma(t,x)=6.12.\]

The weather system proposed by F. Benth and J. Benth \cite{BB07} is given the following SDE:
\begin{equation*}
        \d Y_t=(c_0+c_1\cos(2\pi(t-c_2)/365)-\pi Y_t)\d t+g\d W_t^2,
\end{equation*}
where $c_0=6.4, c_1=10.4, c_2=-166, g=0.3$ and $W_t^2$ is a Brownian motion on $\br^d$. The time unit in this equation is the ``day''.
Let $t=365 u$ make the time unit ``year'' and still keep $t$ rather than $u$ as the time variable to have
\begin{equation}\label{eq:weathe system}
        \d\tilde{Y}_t=f(t,\tilde{Y}_t)\d t+g(t,\tilde{Y}_t)\d \tilde{W}_t^2,
\end{equation}
where
\[f(t,y)=2336+3796\cos(2\pi(t+0.4548))-365\pi y,~~g(t,y)=5.73.\]

Under the same time unit for both SDEs \eqref{eq: general climat system} and \eqref{eq:weathe system}, we consider the following fully coupled multiscale system:
\begin{equation}\label{eq:coupled system}
    \begin{cases}
        \d \tilde{X}_t^\ve=\ve b(\ve t,\tilde{X}_t^\ve,\tilde{Y}_t^\ve)\d t+\sqrt{\ve}\sigma(\ve t,\tilde{X}_t^\ve)\d\tilde{W}_t^1,\\
        \d\tilde{Y}_t^\ve=f(t,\tilde{X}_t,\tilde{Y}_t^\ve)\d t+g(t,\tilde{Y}_t)\d\tilde{W}_t^2,
    \end{cases}
\end{equation}
where
\[b(t,x,y)=6000(x-x^3)+660\cos(2\pi t)+300 y,~~\sigma(t,x)=6.12,\]
and
\[f(t,x,y)=2336+574.5x-365\pi y+3796\cos(2\pi(t+0.4548)),~~g(t,y)=5.73.\]
In the averaging theory of multiscale systems, the following SDE with time rescaled from \eqref{eq:coupled system} is normally considered,
\begin{equation*}
    \begin{cases}
        \d \ddot{X}_t^\ve=b(t,\ddot{X}_t^\ve,\ddot{Y}_t^\ve)\d t+\sigma(t,\ddot{X}_t^\ve)\d\ddot{W}_t^1,\\
        \d\ddot{Y}_t^\ve=\frac{1}{\ve}f(t/\ve,\ddot{X}_t^\ve, \ddot{Y}_t^\ve)\d t+\frac{1}{\sqrt{\ve}}g(t/\ve,\ddot{Y}_t)\d \ddot{W}_t^2.
    \end{cases}
\end{equation*}
This is exactly the SDE (\ref{SDEs}) with periods $1$ and $\epsilon$, but with Brownian motions as the driving noise instead of the stable processes considered in this paper. The drift term $b$ in this case is not Lipschitz, but can be modified to a Lipschitz function as suggested in \cite{FLZ21}, see \eqref{example1} for details.

\section*{Acknowledgements}
We acknowledge the financial supports from an EPSRC grant (ref EP/S005293/2), and a Royal Society Newton Fund grant (ref. NIF\textbackslash R1\textbackslash 221003).

\addtolength{\itemsep}{-1.5 em} 
\setlength{\itemsep}{-3pt}
\footnotesize

\addcontentsline{toc}{section}{References}

\bibliographystyle{plain}  
\bibliography{reference}   

\end{document}